\newtheorem{theorem}{Theorem}[section]
\newtheorem {lemma}[theorem]{Lemma}
\newtheorem{corollary}[theorem]{Corollary}
\newtheorem{proposition}[theorem]{Proposition}
\theoremstyle{definition}
\theoremstyle{remark}
\newtheorem {remark}[theorem]{Remark}
\newtheorem{conditions}[theorem]{Conditions}
\newtheorem{example}[theorem]{Examples }
\newcommand{\dif}{\mathrm{d}}
\newcommand{\Hess}{\mathrm{Hess}}
\newcommand{\cc}{\mathrm{c}}
\newcommand{\e}{\mathrm{e}}
\newcommand{\graph}{\mathrm{graph}}
\newcommand{\tr}{\mathrm{tr}}
\def\ds{\displaystyle}
\def\vle{{\rm Vol}}
 \newcommand{\os}{\overset}
 \numberwithin{equation}{section}
\begin{document}

\title[MVPF by powers of  homogeneous curvature functions of degree one]
{Mixed volume preserving flow by powers of  homogeneous curvature functions of degree one}

  \author{Shunzi Guo}
\address{School of Mathematics and Statistics, Minnan Normal University, Zhangzhou, 363000, People's Republic of China \\
and
School of Mathematics \\Sichuan University\\Chengdu 610065, People's Republic of China }
\email{guoshunzi@yeah.net}


\renewcommand{\subjclassname}{%
  \textup{2010} Mathematics Subject Classification}
\subjclass[2010]{Primary: 53C44, 35K55; Secondary: 58J35, 35B40}


\keywords{curvature flow, mixed volume, convex hypersurface, parabolic partial differential equation}

\begin{abstract}
This paper concerns the evolution of a closed hypersurface of dimension $n(\geq 2)$ in
the Euclidean space ${\mathbb{R}}^{n+1}$ under a mixed volume preserving flow.
The speed equals a power $\beta (\geq 1)$ of
 homogeneous, either convex or concave,
curvature functions of degree one plus a mixed volume preserving term, including
the case of powers of the mean curvature and of the Gauss curvature. The main result is that if the initial hypersurface
satisfies a suitable pinching condition, there exists a unique, smooth solution of the flow for all times,
and the evolving hypersurfaces converge exponentially to a round
sphere, enclosing the same mixed volume as the initial hypersurface.
This result covers and generalises the previous results for convex hypersurfaces
in the Euclidean space by McCoy \cite{McC05} and  Cabezas-Rivas and Sinestrari \cite{CS10}
 to more general curvature flows for convex hypersurfaces with similar curvature pinching condition.
\end{abstract}
\maketitle

\section{\bf Introduction}

\label{intro} Let $M^n$ be a smooth, compact oriented manifold of
dimension $n (\geq 2)$ without boundary, and
$X_{0}:M^{n}\rightarrow {\mathbb{R}}^{n+1}$ a smooth immersion of $M^n$ into the Euclidean space. We are interested in a
one-parameter family of smooth immersions: $X_{t}:M^{n}\rightarrow
{\mathbb{R}}^{n+1}$ evolving according to
\begin{equation}\label{mvpcf}
\left\{
\begin{array}{ll}
\frac{\partial}{\partial t}\mathrm{X}\left(p,t\right)
    = \{ \bar{\phi}(t)-\Phi ( F(\mathscr{W}\left(p,t\right))\}) \nu\left(p,t\right),& p\in M^{n},\\[2ex]
X(\cdot,0)=X_{0}(\cdot),
\end{array}\right.
\end{equation}
where $\nu\left(p,t\right)$ is the outer unit normal to
$M_{t}=X_{t}(M^{n})$ at the point $\mathrm{X}\left(p,t\right)$,
$\mathscr{W}_{-\nu}\left(p,t\right)=-\mathscr{W}_{\nu}\left(p,t\right)$
is the matrix of the Weingarten map on the tangent space $TM^{n}$
induced by $X_{t}$, $\Phi$ is a smooth supplementary function and
the $F(\mathscr{W})$ will be specified below.
The $ \bar{\phi}(t)$ is a mixed volume preserving global term,
and stands for the averaged $\Phi$ in the sense of the mixed volumes of a convex hypersurface $M_{t}$:
\begin{equation}\label{barPhi}
  \bar{\phi}_{m}(t)=\frac{\int_{M_{t}}E_{m+1}\Phi(F)\dif\mu_{t}}{\int_{M_{t}}E_{m+1}\dif\mu_{t}},
\end{equation}
for each $m=-1, 0, 1, \ldots, n-1$.
Here $\dif\mu_{t}$ denotes the surface area element of $M_{t}$, and
 $E_{m}$ is the $m$th elementary symmetric functions,
\begin{equation}\label{def Em}
E_{m}=\left\{ \begin{aligned}
         & 1 && m= 0 \\
         & \sum\limits_{1
\leq i_{1}< \cdots < i_{m} \leq n}\lambda_{i_{1}}\cdots
\lambda_{i_{m}} && m=1, \cdots, n.
                \end{aligned} \right.
\end{equation}
Obviously $E_{1}=H$ and $E_{n}=K$, where $H$ and $K$ denote the
mean curvature and the Gau\ss-Kronecker curvature respectively.
For each fixed $m$, as is clear from the presence of the global term $\bar{\phi}(t)$ in
equation \eqref{mvpcf}, the flow keeps the mixed volumes of $\Omega_{t}$,
where $\Omega_{t}$ is the solid $(n+1)$-region bounded by $M_{t}$.
\begin{equation} \label{def mixedvolume}
V_{n-m}(\Omega_{t})=\left\{ \begin{aligned}
         & \vle(\Omega_{t}) && m= -1 \\
                  &\left\{(n+1) {n \choose m}\right\}^{-1}\int_{M_{t}}E_{m}\dif\mu_{t}&&m= 0,1,\cdots,n-1
                          \end{aligned} \right.
                         \end{equation}
 constant. Here $\vle(\Omega_{t})$ denotes the volume of the domain enclosed by $M_{t}$.

We require that the $F(\mathscr{W})$ satisfies the following properties:
\begin{conditions} \label{Fconds}
\begin{enumerate}[label={(\roman*).}, ref={(\roman*)}]
  \item\label{item1} $F= f\circ \lambda$, where $\lambda$ is the map which takes a self-adjoint operator to its ordered eigenvalues, and $f$ is a smooth, symmetric function defined on an open symmetric cone $\Gamma\subseteq \Gamma_+$ containing $\{(c,\dots,c):\ c>0\}$, where $\Gamma_+=\{\lambda:\ \lambda_i>0,\ i=1,\dots,n\}$.
    \item\label{item2} $f$ is strictly monotone: $\frac{\partial f}{\partial \lambda_{i}} >0$ for each $i = 1, \ldots, n$, at each point of $\Gamma$.
  \item\label{item3} $f$ is homogeneous of degree one: $f\left( k \lambda \right) = k f\left( \lambda \right)$ for any $k >0$.
  \item\label{item4} $f$ is strictly positive on $\Gamma$ and normalised to have $f\left( 1, \ldots, 1 \right) = 1$.
  \item\label{item5} Either:
  \begin{itemize}
  \item\label{item6} $f$ is convex, or
    \item\label{item7} $f$ is concave and one of the following hold
    \begin{enumerate}
    \item \label{item8} $f$ approaches zero on the boundary of $\Gamma$,
    \item \label{item9} $f$ is inverse concave, that is, the function $\tilde{f}(x_{1}, \ldots, x_{n})=-f(x^{-1}_{1}, \ldots, x^{-1}_{n})$ is concave.
   \end{enumerate}
    \end{itemize}
\end{enumerate}
\end{conditions}

Examples of $F(\mathscr{W})$ fulfilling the conditions \ref{Fconds} include, apart from the normalization:
\begin{example}
 \begin{itemize}
\item The concave and inverse concave examples of curvature functions $F$: $\left(\frac{E_m}{E_l}\right)^{\frac{1}{m-l}}$, $n\geq m> l \geq 0$, or the power means $\left(\sum_{i=1}^n  \lambda_i^r \right)^{\frac{1}{r}}$ for $|r|\leq 1$ . For a proof of the inverse concavity of these functions see the proofs of \cite[Theorem 2.6, Theorem 2.7]{And07}.
\item The convex examples of curvature functions $F$,
for a proof that these curvature functions are convex, see \cite[p. 105]{Mit70}.:
\begin{enumerate}
\item the mean curvature $H = \sum_{i=1}^n  \lambda_i$,
\item the length of the second fundamental form $|A| = \sqrt{\sum_{i=1}^n \lambda_i^2}$ and the completely symmetric functions $\gamma_k = \left(\sum_{|\beta| = k}  \lambda^\beta\right)^{\frac{1}{k}}$, $1\leq k\leq n$,
where $\beta$ is a multiindex, $\beta \in \mathbb{N}^n$, and $ \lambda^\beta =  \lambda_1^{\beta_1}\cdot \lambda_2^{\beta_2}\ldots\cdot  \lambda_n^{\beta_n}$.
\end{enumerate}
    \end{itemize}
\end{example}
There exists a wide literature about cuvvature problems of the form \eqref{mvpcf}, for which
the speed $\Phi(F)= F$ satisfies Conditions \ref{Fconds} for hypersurfaces in different ambient spaces.
First, in the case of the volume-preserving mean curvature flow, Huisken \cite{Hui87} showed that convex
Euclidean hypersurfaces remain convex for all time and converge exponentially
fast to round spheres (the corresponding result for curves in the
Euclidean plane is due to Gage \cite{Ga86}),
 while Andrews \cite{And01}
extended this result to the smooth anisotropic mean curvature flow,
and McCoy showed similar results for the Euclidean surface area preserving
mean curvature flow \cite{McC03} and the mixed volume preserving
mean curvature flows \cite{McC04}. The volume-preserving flow has
been used to study constant mean curvature surfaces between parallel
planes \cite{At97,At03} and to find canonical foliations near
infinity in asymptotically flat spaces arising in general relativity
\cite{HY96} (Rigger \cite{Rig04} showed analogous results in the
asymptotically hyperbolic setting). If the initial hypersurface is
sufficiently close to a fixed Euclidean sphere (possibly
non-convex), Escher and Simonett \cite{ES98} proved that the flow
converges exponentially fast to a round sphere, a similar result for
average mean convex hypersurfaces with initially small traceless
second fundamental form is due to Li \cite{Li09}. For a general
ambient manifold,
Alikakos and Freire \cite{AF03} proved long time
existence and convergence to a constant mean curvature surface under
the hypotheses that the initial hypersurface is close to a small
geodesic sphere and that it satisfies some non-degenerate
conditions. Cabezas-Rivas and Miquel exported the Euclidean
results of \cite{At97,At03} to revolution hypersurfaces in a
rotationally symmetric space \cite{CR-M09}, and showed the same
results as Huisken \cite{Hui87} for a hyperbolic background space
\cite{CR-M07} by assuming the initial hypersurface is
horospherically convex. On the other hand, there are few results on speeds different from
the mean curvature: McCoy \cite{McC05} proved the convergence to a
sphere for a large class of function $F$ homogeneous of degree one
(including the case $\Phi(F)=E_{m}^{\beta}$ with $m\beta=1$), Makowski
showed the volume preserving curvature flow in Lorentzian manifolds for $F$ as
a function with homogeneous of degree one exponential converges to a
hypersurface of constant $F$-curvature \cite{Mak13} (moreover,
stability properties and foliations of such a hypersurface  was also
examined). In 2010, Cabezas-Rivas and Sinestrari \cite{CS10} studied
the deformation of hypersurfaces with initial pinching condition \eqref {concave pinched}
in ${\mathbb{R}}^{n+1}$ by a
speed with the form $\Phi(F)=E_{m}^{\beta}$ for some power $\beta \geq 1/m$. In
this way $\Phi(F)$ is a homogeneous function of the curvatures with a
degree $m\beta \geq 1$. The results of \cite{CS10} do not closely relate to the
ambient space. Recently, The author,
together with Li and Wu \cite{GuoLW16},
achieved such extension of the above Theorem \ref{main result for powerF flow} of
Cabezas-Rivas and Sinestrari \cite{CS10} to convex hypersurfaces with similiar pinching condition in the hyperbolic case.

Inspired by these results of \cite{CS10, GLW16, McC05},
in this paper we replace the speed function $F(\lambda)$ in \cite{McC05} by
a power of the  $F(\lambda)$, namely we study the long time existence and the convergence of the flow \eqref{mvpcf}
with the speed $\Phi$ given by a power of the curvature function, namely
\begin{equation} \label{def_Phi}
\Phi(F)= F^\beta
\end{equation}
for some $\beta \geq 1$.
In this way $\Phi$ is a homogeneous function of the curvatures with a degree $\beta\geq 1$.
We call the flow \eqref{mvpcf} with this choice of speed {\it the mixed volume preserving $F^\beta$-flow}.
Our analysis will focused on the existence of this deformation of convex hypersurfaces and its limiting behavior.
It is well-known that preserving convexity of hypersurfaces
is a key point to analyze the convergence of the flow.
However, we face the first difficulty:
Due to higher homogeneity of the speed,
we cannot directly apply the maximum principle of Hamilton \cite{Ham82} to the evolution of the principal curvature
to show that convexity of hypersurfaces is preserved.
Thus, we have to search some ``nice" geometric quantities of the principal curvature
for which the maximum principle can be applied to the evolution.
So the next problem we meet is:
what is the suitable geometric quantity for the mixed volume preserving $F^\beta$-flow.
Meanwhile we observe that most of the literature in the investigation of evolution equations,
requires an additional assumption of a suitable geometric quantity on the initial hypersurface
for example, of $K/H^{n}$ used for the gauss curvature flow \cite{Cho85},
the power mean curvature flow \cite{Sch05,GLW16}
and the volume preserving $E_{m}^\beta$-flow \cite{CS10,GuoLW16},
$R/H^{2}$ used for the flow by the square root of the scalar curvature \cite{Cho87},
and $K/F^{n}$ used for the mixed volume preserving $F$-flow  \cite{McC05}.
We realized that the two scalars $K/F^{n}$ for convex $F$ and $K/H^{n}$ for concave $F$
should be good candidates for our flow.
In Section \ref{Preserving pinching for powerF flow},
we shall succeed in estimating the two scalar.
In this paper, by imposing additional assumptions on the two geometric quantities mentioned above on the initial hypersurface,
we shall prove
\begin{theorem}\label{main result for powerF flow} Suppose $F$ satisfies Conditions \ref{Fconds},
and for $\beta \geq 1$ there exists a positive
constant $\mathcal{C}=\mathcal{C}(n, \beta )$ such that
the following holds: If the initial hypersurface of
${\mathbb{R}}^{n+1}$  is pinched at every point in the case that
\begin{enumerate}[label={(\roman*).}, ref={(\roman*)}]
\item {for convex $F$},\begin{equation}\label{convex pinched}
 K(p)> \mathcal{C}F^n>0,
\end{equation}
\item {for concave $F$},
\begin{equation}\label{concave pinched}
 K(p)> \mathcal{C}H^n>0,
\end{equation}
\end{enumerate}
then the flow \eqref{mvpcf}-\eqref{barPhi} with $\Phi$ given by
\eqref{def_Phi} has a unique and smooth solution for all times,
inequality \eqref{convex pinched} or \eqref{concave pinched} remains true everywhere on the
evolving hypersurfaces $M_{t}$ for all $t>0$ and the $M_{t}$'s
converge, as $t\rightarrow \infty$, exponentially in the $C^{\infty}$-topology,
to a round sphere enclosing the same value of $V_{n-m}$ as $M_{0}$.
\end{theorem}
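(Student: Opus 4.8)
The plan is to follow the standard scheme for constrained curvature flows (Huisken \cite{Hui87}, McCoy \cite{McC05}, Cabezas-Rivas and Sinestrari \cite{CS10}), the two new features being the choice of the pinching quantities $K/F^{n}$ in the convex case and $K/H^{n}$ in the concave case, and the extra care forced by the homogeneity $\beta\geq 1$ of $\Phi=F^{\beta}$. By Condition \ref{Fconds}\ref{item2} the operator $\mathscr{W}\mapsto F^{\beta}$ is strictly elliptic on uniformly convex Weingarten maps, so writing \eqref{mvpcf} locally as a fully nonlinear scalar parabolic equation (for a graph function, or for the support function of the convex bodies $\Omega_{t}$) yields a unique smooth solution on a maximal time interval $[0,T)$; the nonlocal term $\bar{\phi}_{m}$ from \eqref{barPhi} is a smooth bounded functional of the hypersurface and is absorbed by a routine fixed-point/continuation argument, as in \cite{McC04,McC05}. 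I would then record the evolution equations, under \eqref{mvpcf}--\eqref{def_Phi}, of $g_{ij}$, $\nu$, $h_{ij}$, $\mathscr{W}$, $F$, $H$, $K$ and of the two pinching scalars; each has the shape $\partial_{t}u=\dot{\Phi}^{kl}\nabla_{k}\nabla_{l}u+(\text{reaction})+\bar{\phi}(t)\,(\text{zeroth order})$, the global term contributing a zeroth-order piece whose sign at a spatial extremum is what the maximum principle will exploit.

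\emph{Preservation of the pinching.} This is the core step. In the convex case one computes $\partial_{t}(K/F^{n})$ and shows, using the convexity of $F$ (hence of $F^{\beta}$ for $\beta\geq 1$) to control the gradient terms and a careful bookkeeping of the degree-$\beta$ homogeneity (via the Euler relation) for the reaction terms, that for a suitable constant $\mathcal{C}=\mathcal{C}(n,\beta)$ the quantity $K/F^{n}$ cannot fall below $\mathcal{C}$, so \eqref{convex pinched} is preserved by the maximum principle. The concave case is the same with $K/H^{n}$, the extra hypothesis in Condition \ref{Fconds}\ref{item5} ($f$ vanishing on $\partial\Gamma$, or $f$ inverse-concave) supplying the needed sign of the gradient terms, as in \cite{And07,McC05}. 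Note that, by homogeneity and strict monotonicity of $f$, \eqref{convex pinched} (resp.\ \eqref{concave pinched}) is equivalent to a uniform bound $\lambda_{1}/\lambda_{n}\geq\varepsilon(\mathcal{C})>0$, i.e.\ uniform convexity with controlled anisotropy, which is thus maintained along the flow.

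\emph{A priori estimates and long-time existence.} Since $V_{n-m}(\Omega_{t})$ is constant along the flow and the $\Omega_{t}$ are uniformly convex with controlled anisotropy, comparison with inscribed and circumscribed spheres gives uniform two-sided bounds on the inradius and circumradius of $\Omega_{t}$, hence on $|X_{t}|$; in particular the flow neither escapes to infinity nor collapses, and $\bar{\phi}(t)$ stays bounded. A maximum principle applied to the evolution of $F$ then yields an upper bound $F\leq C_{1}$, while a lower bound $F\geq c_{1}>0$ follows because a vanishing-curvature point would contradict the fixed value of $V_{n-m}$ together with the pinching; with the previous step this gives $0<c_{2}\leq\lambda_{i}\leq C_{2}$, so \eqref{mvpcf} is uniformly parabolic. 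One then obtains $C^{2,\alpha}$ estimates --- in the convex case from the convexity structure as in \cite{McC05,CS10}, and in the concave case, where $F^{\beta}$ need not be concave when $\beta>1$, by Krylov--Safonov combined with an Andrews-type estimate made available because the pinching confines the eigenvalues to a fixed compact subcone of $\Gamma$ --- and parabolic Schauder bootstrapping promotes these to uniform $C^{\infty}$ bounds; hence $T=\infty$.

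\emph{Convergence to a round sphere.} Finally one shows that the pinching improves: the evolution of a scale-invariant measure of the traceless second fundamental form $\mathring{h}_{ij}$ (for instance $|\mathring{h}|^{2}/F^{2}$, or the oscillation $F_{\max}-F_{\min}$), together with the uniform curvature bounds above and a Poincar\'e inequality on $M_{t}$ with uniform constant, yields exponential decay of $\mathring{h}_{ij}$; interpolating with the higher-order bounds gives exponential decay in every $C^{k}$-norm, so the $M_{t}$ converge exponentially in the $C^{\infty}$-topology to a totally umbilic limit hypersurface, i.e.\ a round sphere, which encloses the same value of $V_{n-m}$ as $M_{0}$ since $V_{n-m}(\Omega_{t})$ is preserved. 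The main obstacle is the preservation step: the higher homogeneity $\beta>1$ produces extra curvature terms in $\partial_{t}(K/F^{n})$ and $\partial_{t}(K/H^{n})$ that are absent for degree-one speeds, and one must organise these terms, with the right choice of $\mathcal{C}(n,\beta)$, so that the maximum principle still applies; a secondary technical difficulty is the $C^{2,\alpha}$ estimate in the concave case, where $F^{\beta}$ fails to be concave.
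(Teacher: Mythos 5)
Your overall skeleton (maximum principle on $K/F^{n}$ and $K/H^{n}$ for the pinching, radius bounds from the preserved mixed volume, graph representation plus Krylov--Safonov and a Caffarelli/Bellman-extension type $C^{2,\alpha}$ estimate, then bootstrapping) does match the paper, but two of your a priori estimates are asserted by arguments that do not work, and the second of these is precisely the main technical point of the paper. First, the upper speed bound: you say ``a maximum principle applied to the evolution of $F$ then yields an upper bound $F\leq C_{1}$''. The evolution \eqref{ev Phi} is $\partial_{t}\Phi=\Delta_{\dot\Phi}\Phi+(\Phi-\bar{\phi})\,\tr_{\dot\Phi}(A\mathscr{W})$, and since $\bar{\phi}$ in \eqref{barPhi} is a weighted average of $\Phi$, at a spatial maximum of $\Phi$ one has $\Phi-\bar{\phi}\geq 0$, so the reaction term has the wrong sign and the naive maximum principle gives nothing (any bound via $\bar{\phi}$ would be circular). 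The paper, following Tso \cite{Tso} and \cite{McC05,CS10}, instead bounds $Z=\Phi/(\mathcal{S}-\epsilon)$ with $\mathcal{S}=\langle\mathfrak{X},\nu\rangle$, after first securing an interior ball on a controlled time interval (Lemma \ref{t0+smalltime}); some device of this kind is needed and is missing from your plan.

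The more serious gap is your uniform lower bound ``$F\geq c_{1}>0$ \ldots because a vanishing-curvature point would contradict the fixed value of $V_{n-m}$ together with the pinching''. No such static contradiction exists: a uniformly convex hypersurface squeezed between two fixed balls and satisfying the pointwise pinching $\lambda_{n}\leq C\lambda_{1}$ can still have arbitrarily small principal curvatures on part of the hypersurface, and nothing at a fixed time rules out $\min_{M_{t}}F\to 0$ as $t\to\infty$. This is exactly the difficulty the paper flags at the start of Section \ref{convergence for the powerF flow}: since the operator is $\Delta_{\dot\Phi}=\beta F^{\beta-1}\Delta_{\dot F}$, the equation degenerates where $F$ is small, so your regularity and exponential-decay steps (which need uniform parabolicity, e.g.\ for the Poincar\'e/decay argument on the traceless second fundamental form) are unfounded as written. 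The paper's route is: prove only a finite-time lower bound $F\geq F(\cdot,0)e^{-C_{11}\beta t}$ by the maximum principle (Lemma \ref{nonegative of F}), which suffices for long-time existence; then establish $\bar{\phi}(t)\geq\bar{\phi}_{0}>0$, $\int_{0}^{\infty}F_{\min}(t)\,\dif t=\infty$ (Proposition \ref{integral min F}) and the strong-maximum-principle statement that the shape approaches a round sphere (Proposition \ref{the shape of M}); and only then obtain a uniform positive lower speed bound after a waiting time via the Smoczyk/Andrews--McCoy comparison argument, applying the maximum principle to $\langle X-p,\nu\rangle+\gamma(t-t_{0})\Phi-\varpi_{t_{0}}$ together with expanding-sphere barriers (Proposition \ref{lower speed bound}); note also Remark \ref{rew_evF}, which explains why the porous-medium (DiBenedetto--Friedman) alternative of \cite{CS10} is unavailable here. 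Your proposal needs to replace the one-line lower-bound claim by an argument of this strength, and to reorder the logic so that long-time existence does not presuppose a uniform-in-time lower curvature bound.
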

\begin {remark}
Since the main result of Theorem \ref{main result for powerF flow} cover
the previously known results in $\mathbb{R}^{n+1}$: of the volume-preserving mean curvature flow \cite{Hui87},
the surface area preserving mean curvature flow \cite{McC03}, the mixed volume preserving mean curvature flows \cite{McC04}
and the volume-preserving $E_{m}^{\beta}$-flow \cite{CS10}.
Nevertheless, our results are a significant extension of those results in this direction.
\end {remark}

Our work is mainly motivated by the approaches in the papers \cite{CS10,McC05}, we make
modifications to consider our problem for the different speed \eqref{def_Phi}. The
rest of the paper is organized as follows:
Section \ref{notatons for the powerF flow} first gives
some useful preliminary results employed in the remainder of the
paper.  Section \ref{shorttime for the powerF flow} contains details of the short time existence of
the flow \eqref{mvpcf}-\eqref{barPhi} and the induced evolution
equations of some important geometric quantities. In
 Section \ref{Preserving pinching for powerF flow} applying the maximum principle to the evolution equation
of the two quantities $K/F^{n}$ for convex $F$ and $K/H^{n}$ for concave $F$ gives that if the
initial hypersurface is pinched good enough then this is preserved
for $t>0$ as long as the flow \eqref{mvpcf}-\eqref{barPhi}
exists. This is a fundamental step in our procedure as in most of
the literature quoted above. Furthermore,
Section \ref{up bound for the powerF flow speed} proves the
uniform bound of the speed by following a method which was
firstly used by Tso \cite{Tso}. Using more sophisticated results for
fully nonlinear elliptic and parabolic partial differential equations,
Section \ref{longtime for the powerF flow}  obtains uniform bounds on all derivatives of
the curvature and proves long time existence of the flow \eqref{mvpcf}-\eqref{barPhi}.
Finally  Section \ref{convergence for the powerF flow},
following the ideas in \cite{Smo98} and \cite{And12},
obtains the lower speed bounds, which will then allow us to prove that these
evolving hypersurfaces converge to a sphere of
$\mathbb{R}^{n+1}$ smoothly and exponentially.

\section{\bf Notations and preliminary results}\label{notatons for the powerF flow}

From now on, we will follow similar notation used in \cite{CS10}.
In particular, in local coordinates $\{x^{i}\}$, $1\leq i \leq n$, near $p\in
M^{n}$ and $\{y^{\alpha}\}$, $0 \leq \alpha, \beta \leq n$, near
$F(p)\in \mathbb{R}^{n+1}$. Denote by a bar all
quantities on $\mathbb{R}^{n+1}$, for example by
$\bar{g}=\{\bar{g}_{\alpha \beta}\}$ the metric, by
$\bar{g}^{-1}=\{\bar{g}^{\alpha \beta}\}$ the inverse of the metric,
 by $\bar{\nabla}$ the covariant derivative, by $\bar{\Delta}$ the rough Laplacian.
 Components are sometimes taken with respect to the
tangent vector fields $\partial_{\alpha}(=\frac{\partial}{\partial
y^{\alpha}})$ associated with a local coordinate $\{y^{\alpha}\}$
and sometimes with respect to a moving orthonormal frame
$e_{\alpha}$, where $\bar{g}(e_{\alpha},
e_{\beta})=\delta_{\alpha\beta}$. The corresponding geometric
quantities on $M^{n}$ will be denoted by $g$ (the induced metric),
$g^{-1}, \nabla, \Delta, \partial_{i}$ and $e_{i}$, etc.. Then further important quantities are
the second fundamental form $A(p)=\{h_{ij}\}$ and the Weingarten map
$\mathscr{W}=\{g^{ik}h_{kj}\}=\{h^{i}_{j}\}$ as a symmetric operator
and a self-adjoint operator respectively. The eigenvalues
$\lambda_{1}(p)\leq \cdots \leq \lambda_{n}(p)$ of $\mathscr{W}$ are
called the principal curvatures of ${X(M^{n})}$ at
${X(p)}$. The mean curvature is given by
\[H:=\tr_{g}{\mathscr{W}}=h^{i}_{i}=\sum_{i=1}^{n}\lambda_{i},\]
the squared norm of the second fundamental form by
$$\bigl|A\bigr|^{2}:=\tr_{g}({\mathscr{W}^{t}\mathscr{W}})=h^{i}_{j}h^{j}_{i}=h^{ij}h_{ij}=\sum_{i=1}^{n}\lambda^{2}_{i},$$
and Gau\ss-Kronecker curvature by
$$K:=\det(\mathscr{W})=\det\{h^{i}_{j}\}=\frac{\det\{h_{ij}\}}{\det\{g_{ij}\}}=\prod_{i=1}^{n}\lambda_{i}.$$
More generally, the $m$th elementary symmetric functions $E_{m}$ are
given by
$$E_{m}(\lambda)=\sum\limits_{1 \leq
i_{1}<  \cdots < i_{r} \leq n}\lambda_{i_{1}}\cdots
\lambda_{i_{r}},\ \ {\hbox
{for}}\  \lambda = (\lambda_{1},\ldots,\lambda_{n})\in
\mathbb{R}^{n},$$
we also set $E_{m}=1$ if $m = 0$.
Thus the $m$th mean curvatures $E_{m}$ are given by
\eqref{def_Phi}.
In addition we define
\[
\Gamma_{m}:=\{\lambda\in\mathbb{R}^{n}|E_{1}(\lambda)>0, E_{2}(\lambda)>0, \cdots, E_{m}(\lambda)>0\}
\]
and
the quotients
\[
{Q_{m}}=\frac{E_{m}(\lambda)}{E_{m-1}(\lambda)},\qquad \forall \lambda \in\Gamma_{m}.
\]
Since $E_{m}$ is homogeneous of degree $m$, the
speed $F$ is homogeneous of degree $\beta$ in the curvatures
$\lambda_{i}$. Denote the vector $(\lambda_{1},\dots, \lambda_{n})$
of $\mathbb{R}^{n}$ by $\lambda$ and the positive cone by
$\Gamma_{+}\subset \mathbb{R}^{n}$, i.e.
\[
\Gamma_{+}=\{\lambda=(\lambda_{1},\dots, \lambda_{n}):\lambda_{i}>0,
\forall\ i\}.
\]
It is clear that $H$, $K$, $E_{m}$, $Q_{m}$, $F$ may be viewed as functions
of $\lambda$, or as functions of $A$, or as functions of
$\mathscr{W}$, or also functions of space and time on $M_{t}$.
Throughout this paper we sum over repeated indices from $1$ to $n$ unless otherwise indicated.
In computations on the hypersurface $M_{t}$, raised indices indicate contraction with the metric.

We will denote by $ \dot{F}^{kl}$ the matrix of the first partial derivatives of $F$ with respect to the components of its argument:
$$\left. \frac{\partial}{\partial s} F\left( A+sB\right) \right|_{s=0} = \dot F^{ij} \left( A\right) B_{ij} \mbox{.}$$
Furthermore we denote the second partial derivatives of $F$ by
$$\left. \frac{\partial^{2}}{\partial s^{2}} F\left( A+sB\right) \right|_{s=0} = \ddot F^{kl, rs} \left( A\right) B_{kl} B_{rs} \mbox{.}$$
If we do not indicate explicitly where derivatives of $F$ and of $f$ are evaluated then they are evaluated at $\mathscr{W}$ and $\lambda\left(\mathscr{W} \right)$ respectively.
We will further use the shortened notation $\dot{f}^{i}=\frac{\partial f}{\partial \lambda_{i}}$
and $\ddot{f}^{ij}=\frac{\partial^{2} f}{\partial \lambda_{i}\partial \lambda_{j}}$  where appropriate.
We will use similar notation and conventions for other functions of matrices when we differentiate them.

Now we state the following well-known inequalities for general curvature functions $F$:
\begin {lemma}
\label{FHInequalities}
For any concave \,(convex) $F$  fulfilling the conditions \ref{Fconds},
	\begin{equation*}
	F(\lambda) \leq\, ( \geq)\, \frac{F(1, \ldots, 1)}{n} H(\lambda).
	\end{equation*}
	and
	\begin{equation*}
		\sum_{i=1}^n f_i(\lambda) =\tr(F^{kl})\geq \,( \leq)\, 1,
	\end{equation*}
	where $\lambda = (\lambda_k) \in \Gamma_+$.
\end {lemma}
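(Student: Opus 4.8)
The plan is to obtain both estimates from two elementary properties of $f$: Euler's identity for one\nobreakdash-homogeneous functions (Condition \ref{Fconds}\ref{item3}), which reads $\sum_{i=1}^n \dot f^i(\lambda)\,\lambda_i = f(\lambda)$ for every $\lambda$ in the domain of $f$, and the supporting\nobreakdash-hyperplane inequality for concave (respectively convex) functions, which states that $f$ lies weakly below (respectively above) each of its tangent hyperplanes along any segment contained in its domain. The only preliminary computation is the value of $\nabla f$ at the point $\mathbf{1}:=(1,\dots,1)$: since $f$ is symmetric (Condition \ref{Fconds}\ref{item1}), the gradient $\nabla f(\mathbf{1})$ is invariant under permutations of its components, so all components coincide, and Euler's identity combined with the normalisation $f(\mathbf{1})=1$ (Condition \ref{Fconds}\ref{item4}) forces $\dot f^i(\mathbf{1})=\tfrac1n$ for every $i$.

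Suppose first that $f$ is concave. Using the supporting hyperplane at the base point $\mathbf{1}$ evaluated at $\lambda$,
\begin{equation*}
f(\lambda)\ \le\ f(\mathbf{1})+\sum_{i=1}^n \dot f^i(\mathbf{1})(\lambda_i-1)\ =\ 1+\frac1n\sum_{i=1}^n(\lambda_i-1)\ =\ \frac1n H(\lambda),
\end{equation*}
which is the first inequality (retaining the normalisation constant throughout gives the stated form with $F(1,\dots,1)$). Exchanging the roles of the two points, the supporting hyperplane at $\lambda$ evaluated at $\mathbf{1}$ gives $1=f(\mathbf{1})\le f(\lambda)+\sum_{i=1}^n \dot f^i(\lambda)(1-\lambda_i)$; substituting $\sum_i \dot f^i(\lambda)\lambda_i=f(\lambda)$ from Euler's identity cancels the $f(\lambda)$ contributions and leaves $\sum_i \dot f^i(\lambda)\ge 1$. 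Since the trace of the matrix derivative $\dot F^{kl}$ of $F=f\circ\lambda$ equals $\sum_i \dot f^i$ (the standard identification of the derivatives of a spectral function with those of the underlying symmetric function), this is precisely the second inequality. If $f$ is convex instead, both supporting\nobreakdash-hyperplane inequalities reverse and the same two lines yield $F(\lambda)\ge \tfrac1n H(\lambda)$ and $\tr(\dot F^{kl})\le 1$.

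The one point deserving attention is that the supporting\nobreakdash-hyperplane inequality is only available along a segment lying inside the domain of $f$, so we must know that the segment joining $\mathbf{1}$ and $\lambda$ is contained in $\Gamma$. This holds because $\Gamma$ is a convex cone containing $\mathbf{1}$ --- as is the case for all the curvature cones appearing in the examples above (for instance $\Gamma_m$ and $\Gamma_+$) --- so convexity of $\Gamma$ together with $\mathbf{1}\in\Gamma$ (Condition \ref{Fconds}\ref{item1}) does the job for every $\lambda\in\Gamma$. I do not anticipate a genuine obstacle: once the gradient at $\mathbf{1}$ is pinned down, the lemma reduces to the two\nobreakdash-line tangent\nobreakdash-plane\nobreakdash-plus\nobreakdash-homogeneity computation above, and it is exactly the one\nobreakdash-homogeneity that makes those tangent\nobreakdash-plane estimates collapse to the clean bounds in the statement.
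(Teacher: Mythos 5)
Your argument is correct and is essentially the standard proof of this lemma: the paper itself does not prove it but cites Gerhardt's Lemma 2.2.19 and Lemma 2.2.20 (and Urbas for the concave case), and the proofs behind those citations are exactly your combination of Euler's identity for degree-one homogeneity with the supporting-hyperplane inequality taken once at $(1,\dots,1)$ and once at $\lambda$, together with $\dot f^i(1,\dots,1)=F(1,\dots,1)/n$ by symmetry and the spectral identity $\tr(\dot F^{kl})=\sum_{i=1}^n\dot f^i$. The one caveat --- that the tangent-plane inequalities need the segment joining $(1,\dots,1)$ and $\lambda$ to lie in $\Gamma$, i.e.\ that the cone in Conditions \ref{Fconds} is (implicitly) convex, as it is in all the examples listed and in Gerhardt's setting --- is a point you already flag and handle correctly.
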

\begin{proof}
	See \cite[Lemma 2.2.19, Lemma 2.2.20]{Ger06}. Also see \cite{Urb91} for concave $F$.
\end{proof}

We need the following relation between the first derivatives of $F$ and the first derivatives of $f$,
 of \cite[Corollary 3.2]{McC05}.

\begin {lemma}
\label{relation of dotF and dotf}
If $\mathscr{W}$ has distinct eigenvalues $\lambda_k$.
then $F$  is  concave \,(convex) at $\mathscr{W}$ if and only if $f$  is  concave \,(convex) at $\lambda(\mathscr{W})$
and for all $k\neq l$
	\begin{equation*}
	\frac{\dot{f}_{k}-\dot{f}_{l}}{\lambda_k-\lambda_l}\leq\,(\geq) \,0.
	\end{equation*}
	\end {lemma}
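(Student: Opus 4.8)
The plan is to deduce the statement from the classical second-variation formula for a symmetric function of a symmetric operator, and then to exploit that the diagonal and the off-diagonal entries of a symmetric matrix may be prescribed independently. Fix $\mathscr{W}$ and work in an orthonormal basis of eigenvectors, so that $\mathscr{W}=\mathrm{diag}(\lambda_{1},\dots,\lambda_{n})$ with the $\lambda_{i}=\lambda_{i}(\mathscr{W})$ distinct. For an arbitrary symmetric matrix $B=\{B_{kl}\}$ consider the smooth path $s\mapsto \mathscr{W}+sB$; since the spectrum of $\mathscr{W}$ is simple, for small $|s|$ the eigenvalues $\lambda_{i}(s)$ of $\mathscr{W}+sB$ depend smoothly on $s$, with $\lambda_{i}(0)=\lambda_{i}$, and first- and second-order perturbation theory (see the reference cited for this lemma, or any treatment of the Rayleigh--Schr\"odinger expansion) gives
\[
\left.\frac{d}{ds}\right|_{0}\lambda_{i}(s)=B_{ii},\qquad
\left.\frac{d^{2}}{ds^{2}}\right|_{0}\lambda_{i}(s)=2\sum_{j\neq i}\frac{B_{ij}^{2}}{\lambda_{i}-\lambda_{j}}.
\]
Differentiating $F(\mathscr{W}+sB)=f(\lambda_{1}(s),\dots,\lambda_{n}(s))$ twice at $s=0$ by the chain rule and symmetrising the resulting off-diagonal double sum in its index pair (using $B_{kl}=B_{lk}$), one arrives at
\[
\ddot F^{kl,rs}B_{kl}B_{rs}=\sum_{k,l=1}^{n}\ddot f^{kl}\,B_{kk}B_{ll}+2\sum_{k<l}\frac{\dot f_{k}-\dot f_{l}}{\lambda_{k}-\lambda_{l}}\,B_{kl}^{2}.
\]

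The key observation is that as $B$ ranges over all symmetric matrices, the tuple $(B_{11},\dots,B_{nn})$ ranges over $\mathbb{R}^{n}$ and the tuple $(B_{kl})_{k<l}$ ranges over $\mathbb{R}^{n(n-1)/2}$, and these two ranges are independent. Hence $\ddot F^{kl,rs}B_{kl}B_{rs}\leq 0$ for every symmetric $B$ (i.e. $F$ is concave at $\mathscr{W}$) if and only if each of the two sums on the right is non-positive for all choices of its entries. The first sum is non-positive for every choice of $(B_{kk})$ precisely when the Hessian matrix $(\ddot f^{kl})$ of $f$ at $\lambda(\mathscr{W})$ is negative semidefinite, that is, $f$ is concave at $\lambda(\mathscr{W})$; the second sum is non-positive for every choice of $(B_{kl})_{k<l}$ precisely when $\dfrac{\dot f_{k}-\dot f_{l}}{\lambda_{k}-\lambda_{l}}\leq 0$ for all $k\neq l$. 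This is exactly the asserted equivalence in the concave case, and the convex case follows verbatim, reversing all inequalities and replacing ``negative semidefinite'' by ``positive semidefinite''.

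The only delicate point is the $C^{2}$-dependence of $s\mapsto\lambda_{i}(s)$, which is precisely where the hypothesis of distinct eigenvalues is used and why the lemma is phrased for $\mathscr{W}$ with simple spectrum (for a multiple eigenvalue the eigenvalue functions are merely Lipschitz, and a separate limiting argument would be needed). The remaining steps---the chain-rule differentiation and the reindexing that turns $\sum_{k}\dot f_{k}\lambda_{k}''(0)$ into the divided-difference sum---are routine, so the substance of the proof is simply the clean splitting of $\ddot F$ into a ``diagonal'' part governed by the Hessian of $f$ and an ``off-diagonal'' part governed by the signs of the divided differences $(\dot f_{k}-\dot f_{l})/(\lambda_{k}-\lambda_{l})$.
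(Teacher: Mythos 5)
Your argument is correct, and it is essentially the same as the one behind the paper's source for this lemma: the paper gives no proof of its own but cites \cite[Corollary 3.2]{McC05}, whose proof rests on exactly the second-derivative identity $\ddot F^{kl,rs}B_{kl}B_{rs}=\sum_{k,l}\ddot f^{kl}B_{kk}B_{ll}+2\sum_{k<l}\frac{\dot f_k-\dot f_l}{\lambda_k-\lambda_l}B_{kl}^2$ at a diagonal matrix with simple spectrum (cf. \cite{And94, Ger06}), followed by the same observation that the diagonal and off-diagonal entries of $B$ can be prescribed independently. Your perturbation formulas and the splitting of the quadratic form are the standard route, so no gap here.
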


We also need the following inequalities for $F$.
 See also \cite[Corollay 3.3]{McC05} for a derivation.
\begin {lemma}
\label{FHInequalities2}
For any concave \,(convex) $F$ at $\mathscr{W}$ fulfilling the conditions \ref{Fconds},
	\begin{equation*}
	H\tr_{\dot F}( A\mathscr{W} ) - F |A|^{2} \leq 0\, ( \geq 0).
	\end{equation*}
	\end {lemma}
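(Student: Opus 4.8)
The plan is to diagonalise $\mathscr{W}$ and thereby reduce the tensorial statement to an algebraic inequality about $f$ and its gradient. Fix the point in question and work in an orthonormal frame in which $\mathscr{W}=\mathrm{diag}(\lambda_1,\dots,\lambda_n)$, so that $\lambda=\lambda(\mathscr{W})\in\Gamma\subseteq\Gamma_+$, $h_{ij}=\lambda_i\delta_{ij}$ and $h^i_{\,j}=\lambda_i\delta^i_{\,j}$. In such a frame $\dot F^{kl}$ is diagonal with entries $\dot f^k$, and the matrix product $A\mathscr{W}$ has entries $(A\mathscr{W})_{ij}=h_{ik}h^k_{\,j}=\lambda_i^2\delta_{ij}$, so $\tr_{\dot F}(A\mathscr{W})=\sum_k\dot f^k\lambda_k^2$. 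Hence
\[
H\,\tr_{\dot F}(A\mathscr{W})-F\,|A|^2=\Big(\sum_i\lambda_i\Big)\Big(\sum_k\dot f^k\lambda_k^2\Big)-f(\lambda)\sum_i\lambda_i^2.
\]
Invoking Euler's identity for the degree-one homogeneous function $f$ (Condition \ref{item3}), $f(\lambda)=\sum_k\dot f^k\lambda_k$, the right-hand side becomes
\[
S:=\sum_{i,k}\dot f^k\lambda_i\lambda_k^2-\sum_{i,k}\dot f^k\lambda_k\lambda_i^2=\sum_{i,k}\dot f^k\,\lambda_i\lambda_k\,(\lambda_k-\lambda_i).
\]

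The next step is to symmetrise $S$ in the indices $i,k$: renaming the summation indices in one copy of the sum and averaging gives
\[
2S=\sum_{i,k}\lambda_i\lambda_k\,(\lambda_k-\lambda_i)\,(\dot f^k-\dot f^i).
\]
Now the sign is read off termwise. When $f$ is concave its gradient is ordered oppositely to its argument, that is, $\lambda_i\le\lambda_j\Rightarrow\dot f^i\ge\dot f^j$; equivalently $(\dot f^k-\dot f^i)/(\lambda_k-\lambda_i)\le0$ whenever $\lambda_i\ne\lambda_k$, which is the content of Lemma \ref{relation of dotF and dotf}. This can also be seen directly, since $t\mapsto f(\dots,\lambda_i+t,\dots,\lambda_j-t,\dots)$ is concave and symmetric about $t=\tfrac{1}{2}(\lambda_j-\lambda_i)$, so its derivative at $t=0$, which equals $\dot f^i-\dot f^j$, has the sign of $\lambda_j-\lambda_i$. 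Consequently $(\lambda_k-\lambda_i)(\dot f^k-\dot f^i)\le0$ for every pair $(i,k)$ — pairs with $\lambda_i=\lambda_k$ contributing zero — and since $\lambda_i\lambda_k>0$ on $\Gamma_+$ every summand of $2S$ is nonpositive. Therefore $S\le 0$, which is the assertion; for convex $f$ all the orderings reverse and one obtains $S\ge 0$.

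I do not anticipate a genuine obstacle here. The only points that need care are the identification of $\tr_{\dot F}(A\mathscr{W})$ with $\sum_k\dot f^k\lambda_k^2$ in the eigenframe, the use of Euler's identity, and the presence of repeated principal curvatures; the last causes no trouble because in the symmetrised form of $S$ the factor $\lambda_k-\lambda_i$ kills any contribution from equal eigenvalues, while the ordering property of $\dot f$ used above holds regardless of multiplicities. This recovers \cite[Corollary 3.3]{McC05}.
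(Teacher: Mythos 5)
Your proof is correct, and it is essentially the derivation the paper points to: the paper itself gives no argument but cites \cite[Corollary 3.3]{McC05}, whose proof is exactly this diagonalisation, Euler's identity, and symmetrisation $2S=\sum_{i,k}\lambda_i\lambda_k(\lambda_k-\lambda_i)(\dot f^k-\dot f^i)$ combined with the ordering of $\dot f^i$ from Lemma \ref{relation of dotF and dotf}. Your remark handling repeated eigenvalues (where that lemma's distinctness hypothesis fails) via the direct concavity-symmetry argument is a welcome extra precision.
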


In our analysis of the flow equations we will use several geometric estimates for a compact, strictly convex hypersurface
with suitably pinched curvatures, which implies suitably pinching relation between outer radius (circumradius) and inner radius (inradius)
for enclosed region by the hypersurface.
The first of these appears in
(see also \cite{McC05}).
\begin {lemma}
\label{pinched curvatures estimates}
If a smooth, compact, uniformly convex manifold $M^n$ satisfies everywhere the pointwise curvature pinching estimate
$$\lambda_n \leq C_1 \lambda_1$$
for some constant $C_1 < \infty$, then the circumradius $\rho_{+}$ of $M^n$  satisfies
$$\rho_{+} \leq C_2 \, \rho_{-}$$
where $C_2= \big(\frac{n+2}{\sqrt{2}}\big)C_1$ and  $\rho_{-}$ denotes the inradius of $M^n$.
	\end {lemma}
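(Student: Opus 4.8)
The plan is to exploit the pinching hypothesis $\lambda_n \le C_1\lambda_1$ to control the shape of $M^n$ by comparing it with inscribed and circumscribed balls, using only elementary convex geometry. First I would fix an interior point $q$ of the convex body $\Omega$ bounded by $M^n$ at which the inradius is realised, so that the inscribed ball $B_{\rho_-}(q)$ touches $M^n$ at some point, and center coordinates there. The key analytic input is the lower curvature bound: since $\lambda_1 \ge \frac1{C_1}\lambda_n$ and the principal curvatures are bounded above in terms of the geometry (a uniformly convex compact hypersurface has $\lambda_n$ finite), one gets a uniform \emph{lower} bound $\lambda_1 \ge 1/\rho_+$ at the point realising the circumradius — indeed, at a point where the circumscribed sphere $S_{\rho_+}$ touches $M^n$ from outside, the hypersurface lies inside that sphere, so $\lambda_n \le 1/\rho_+$ there, hence $\lambda_1 \le \lambda_n \le 1/\rho_+$, which is the wrong direction; instead I would compare at the \emph{inradius} contact point, where $M^n$ lies outside $B_{\rho_-}$, giving $\lambda_1 \ge 1/\rho_-$ there. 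Combined with the pinching this yields a pointwise \emph{upper} bound on $\lambda_n$, and then integrating the curvature (or using that a convex hypersurface with $\lambda_n \le \Lambda$ everywhere contains a ball of radius $1/\Lambda$) one recovers a lower bound on $\rho_-$ in terms of $\rho_+$.

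More concretely, the cleanest route is the following. At a point $p_0\in M^n$ where the circumscribed sphere of radius $\rho_+$ is tangent, all principal curvatures satisfy $\lambda_i(p_0)\ge 1/\rho_+$ by the interior-sphere comparison, so in particular $\lambda_1(p_0)\ge 1/\rho_+$. Hence everywhere $\lambda_n \le C_1\lambda_1$ forces, \emph{after propagating via a width estimate}, a bound of the form $\lambda_1 \le \text{(const)}/\rho_-$. The standard mechanism (see McCoy \cite{McC05} and the references therein) is: a convex body whose support function $u$ satisfies $u_{ij}+u\,\delta_{ij} = (\text{principal radii})$ with principal radii pinched by $C_1$ has oscillation of $u$ controlled, i.e. $\rho_+ - \rho_- \le c(n)\,C_1\,(\text{average radius})$, whence $\rho_+ \le (1 + c(n)C_1)\rho_-$. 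To get the stated constant $C_2 = \frac{n+2}{\sqrt2}C_1$ I would argue via a slicing/width comparison: project $M^n$ to a hyperplane, use that the principal radii $r_i = 1/\lambda_{n+1-i}$ satisfy $r_n/r_1 \le C_1$, integrate the Gauss map to compare the maximal and minimal widths of $\Omega$, and finally relate widths to $\rho_\pm$ by the elementary inequalities $\rho_- \ge \frac{1}{n+2}\,w_{\min}$ and $\rho_+ \le \frac{1}{\sqrt2}\,w_{\max}$ (or the analogous sharp constants), giving the claimed $C_2$.

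The main obstacle I anticipate is not any deep estimate but bookkeeping the correct dimensional constant: one must be careful that the pinching $\lambda_n \le C_1\lambda_1$ is a bound on eigenvalues of the Weingarten map, which translates to a bound $r_n \le C_1 r_1$ on principal \emph{radii of curvature}, and then the passage from a pointwise ratio bound on principal radii to a global ratio bound on inradius and circumradius costs exactly a factor that depends on $n$ through a width-type argument. I would organise the proof so that the curvature pinching is used once, at the start, to get $r_n \le C_1 r_1$ pointwise; then everything is convex geometry of the support function, independent of the flow. A secondary technical point is ensuring $M^n$ really is uniformly convex so that the support function is $C^2$ and the principal radii are well-defined and positive everywhere; this is part of the hypothesis (\emph{uniformly convex}), so no extra work is needed there. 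I expect the write-up of the convex-geometry step to mirror the corresponding lemma in \cite{McC05}, to which one can appeal for the details of the width comparison if a self-contained argument is deemed too long.
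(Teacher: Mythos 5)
The paper does not actually prove this lemma: it is quoted from the literature (McCoy \cite{McC05}, ultimately Andrews \cite{And94}), so there is no internal argument to compare with, and your closing move of "appealing to \cite{McC05} for the details" is in effect what the paper does. Judged as a proof, however, your sketch has a genuine gap, and two of the auxiliary facts you state are backwards. At a point where the circumscribed sphere touches $M^n$ the hypersurface is \emph{more} curved than that sphere, so $\lambda_i\geq 1/\rho_+$ there (not $\lambda_n\leq 1/\rho_+$); at the insphere contact point it is \emph{less} curved, so $\lambda_i\leq 1/\rho_-$ there (not $\lambda_1\geq 1/\rho_-$). Your second paragraph does use the correct circumsphere inequality, but the pivot in your first paragraph, and the "pointwise upper bound on $\lambda_n$" drawn from it, rest on the reversed comparisons.

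The real issue is the global step. The hypothesis compares $\lambda_1$ and $\lambda_n$ \emph{at the same point}, whereas $\rho_+$ and $\rho_-$ (equivalently $w_{\max}$ and $w_{\min}$) are governed by the curvature at \emph{different} points of $M^n$; every place where your sketch crosses this divide --- "propagating via a width estimate", "integrate the Gauss map to compare the maximal and minimal widths", and the Blaschke-rolling step (which needs $\lambda_n\leq\Lambda$ everywhere, while the contact-point comparison gives such a bound at one point only) --- is asserted rather than proved, and this is exactly where the content of the lemma lies. The "standard mechanism" you quote is also a non sequitur as written: from $\rho_+-\rho_-\leq c(n)C_1\cdot(\text{average radius})$, with the average radius lying between $\rho_-$ and $\rho_+$ and $C_1\geq 1$, one cannot deduce $\rho_+\leq(1+c(n)C_1)\rho_-$. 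A sanity check that something substantive must be supplied: for $n=1$ the pinching hypothesis is vacuous ($\lambda_1=\lambda_n$, so $C_1=1$), yet an eccentric ellipse has $\rho_+/\rho_-$ arbitrarily large; hence any correct proof must genuinely use $n\geq 2$, i.e.\ the interaction of curvatures in different directions, which a purely width-bookkeeping argument of the kind you outline never does. Your outer scaffolding is fine and even reproduces the stated constant: $\rho_+\leq w_{\max}/\sqrt{2}$ (a weak form of Jung's theorem) and $\rho_-\geq w_{\min}/(n+2)$ (weaker than Steinhagen) yield $C_2=\frac{n+2}{\sqrt{2}}C_1$ \emph{provided} one shows a comparison of the form $w_{\max}\leq C_1\,w_{\min}$ (or $\mathrm{diam}\leq (n+2)C_1\rho_-$); but that comparison is precisely the nontrivial assertion of Andrews' lemma in \cite{And94}, and none of the steps you describe establishes it.
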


The ratio of circumradius to inradius of $M^n$ gives estimates for the inradius and circumradius in terms of the mixed volumes of $M^n$.
\begin {lemma}
\label{radius estimates}
Under the assumptions of Lemma \ref{pinched curvatures estimates}, for any $m=1,\cdots,n+1$,
$$
\rho_-\geq \frac{1}{C_2} \left(\frac{V_m}{\omega_{n+1}}\right)^{\frac1m}
\quad\text{and}
\quad
\rho_+\leq {C_2} \left(\frac{V_m}{\omega_{n+1}}\right)^{\frac1m}.
$$
where $\omega_{n+1}$ dentotes the volume of the $(n+1)$-dimensional unit ball.
	\end {lemma}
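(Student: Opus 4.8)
The plan is to deduce the estimate purely by convex geometry, combining the monotonicity and homogeneity of mixed volumes with the circumradius--inradius comparison already recorded in Lemma \ref{pinched curvatures estimates}. Write $\Omega$ for the convex body enclosed by $M^n$, with inradius $\rho_-$ and circumradius $\rho_+$. By the very definition of these radii one can, after translating, produce balls $B_{\rho_-}$ and $B_{\rho_+}$ (not necessarily concentric) with $B_{\rho_-}\subseteq\Omega\subseteq B_{\rho_+}$.

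First I would record that the functionals $V_m$ introduced in \eqref{def mixedvolume} coincide, up to the normalisation chosen there, with the classical Quermassintegrals of $\Omega$; consequently they are translation invariant and monotone under inclusion of convex bodies, and for a ball of radius $r$ a direct computation from \eqref{def mixedvolume} --- using that $\partial B_r$ has all principal curvatures equal to $1/r$, area $(n+1)\omega_{n+1}r^n$, and hence $E_m=\binom nm r^{-m}$ --- gives
\[
V_m(B_r)=\bigl\{(n+1)\tbinom nm\bigr\}^{-1}\tbinom nm r^{-m}(n+1)\omega_{n+1}r^{n}=\omega_{n+1}\,r^{m}\qquad(m=1,\dots,n),
\]
and likewise $V_{n+1}(B_r)=\vle(B_r)=\omega_{n+1}r^{n+1}$, consistent with the convention $V_{n+1}=\vle$. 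Applying monotonicity to $B_{\rho_-}\subseteq\Omega\subseteq B_{\rho_+}$ then yields
\[
\omega_{n+1}\rho_-^{\,m}=V_m(B_{\rho_-})\le V_m(\Omega)\le V_m(B_{\rho_+})=\omega_{n+1}\rho_+^{\,m},
\]
so that $\rho_-\le\bigl(V_m/\omega_{n+1}\bigr)^{1/m}\le\rho_+$. Finally I would invoke Lemma \ref{pinched curvatures estimates}, which under the standing hypothesis $\lambda_n\le C_1\lambda_1$ gives $\rho_+\le C_2\rho_-$; feeding this into the two ends of the previous chain of inequalities produces $\rho_-\ge\frac1{C_2}\rho_+\ge\frac1{C_2}\bigl(V_m/\omega_{n+1}\bigr)^{1/m}$ and $\rho_+\le C_2\rho_-\le C_2\bigl(V_m/\omega_{n+1}\bigr)^{1/m}$, which are exactly the two asserted bounds.

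There is no real analytic difficulty; the only point deserving care is the identification of the curvature integral $\int_{M}E_m\,\dif\mu$ appearing in \eqref{def mixedvolume} with a (normalised) Quermassintegral, so that the monotonicity of mixed volumes under set inclusion may be applied --- this is classical and can be read off, for example, from the Steiner formula for the volumes of the outer parallel bodies of $\Omega$. Once that identification and the explicit value $V_m(B_r)=\omega_{n+1}r^m$ are in hand, the rest is the two-line comparison above.
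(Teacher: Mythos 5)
Your argument is correct and is essentially the proof the paper intends: the paper simply defers to McCoy's Corollary 3.6, which is exactly this convex-geometry argument --- monotonicity of the quermassintegrals $V_m$ under inclusion of convex bodies, their value $\omega_{n+1}r^m$ on a ball of radius $r$, and the circumradius--inradius comparison of Lemma \ref{pinched curvatures estimates}. One bookkeeping point: by \eqref{def mixedvolume} the functional $V_m$ is the normalised integral of $E_{n-m}$, not of $E_m$, so the displayed ball computation should read $\bigl\{(n+1)\binom{n}{n-m}\bigr\}^{-1}\binom{n}{n-m}r^{-(n-m)}(n+1)\omega_{n+1}r^{n}=\omega_{n+1}r^{m}$; as you wrote it (with $E_m$ and the normalisation $\{(n+1)\binom{n}{m}\}^{-1}$) the exponent would come out $n-m$ rather than $m$. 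This is only an indexing slip --- the stated value $V_m(B_r)=\omega_{n+1}r^m$ and the rest of the comparison are correct as they stand.
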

\begin{proof}
We can argue exactly as in \cite[Corollary 3.6]{McC05}).
\end{proof}


 We note some important properties of $E_{m}$ (see \cite{CS10} for a
 simple derivation).
 \begin {lemma}\label{property Em}
 Let $1\leq m \leq n$ be fixed.
 \begin{itemize}
 \item[i)] The $m$th roots $E_{m}^{1/m}$ are
 concave in $\Gamma_{+}$.
 \item[ii)] For all $i $, $\frac{\partial E_{m}}{\partial \lambda_{i}}(\lambda)>0$,
  where $\lambda \in \Gamma_{+}$.
  \item[iii)] ${E}_m^{1/m} \leq \ds\frac{H}{n}$; equivalently, ${E}_m\leq \left(\ds\frac{H}{n}\right)^{m \beta}$.
 \end{itemize}
 \end {lemma}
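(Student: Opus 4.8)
I would establish the three assertions separately: (ii) and (iii) are essentially immediate on the positive cone, and the concavity in (i) is the only part requiring a genuine input, namely the classical G{\aa}rding/Newton theory of elementary symmetric polynomials. For (ii) I would differentiate $E_m$ directly from the definition \eqref{def Em}: collecting the monomials of $E_m$ that contain the factor $\lambda_i$ gives
\[
\frac{\partial E_m}{\partial\lambda_i}(\lambda)=\sum_{\substack{1\le j_1<\cdots<j_{m-1}\le n\\ j_k\ne i}}\lambda_{j_1}\cdots\lambda_{j_{m-1}}=E_{m-1}(\lambda_1,\dots,\widehat{\lambda_i},\dots,\lambda_n),
\]
the $(m-1)$-th elementary symmetric function of the remaining $n-1$ variables; similarly $\partial^2 E_m/\partial\lambda_i\partial\lambda_j$ is, for $i\ne j$, the $(m-2)$-th elementary symmetric function of the $n-2$ variables other than $\lambda_i,\lambda_j$, and it vanishes when $i=j$. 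On $\Gamma_+$ every $\lambda_j$ is positive, so the right-hand side of the first identity is a sum of strictly positive products; this proves (ii) and records the form of $\dot{E}_m^i$ and $\ddot{E}_m^{ij}$ for use below.

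For (i) I would invoke that $E_m$ is a G{\aa}rding-hyperbolic polynomial with respect to the direction $(1,\dots,1)$ whose G{\aa}rding cone is exactly $\Gamma_m\supseteq\Gamma_+$; G{\aa}rding's inequality then yields that $E_m^{1/m}$ is concave on $\Gamma_m$, in particular on $\Gamma_+$. If a self-contained argument is preferred, set $G:=E_m^{1/m}$ and compute $\Hess G$ from the first- and second-derivative identities above: after multiplication by the positive factor $mE_m^{2-1/m}$, the negative semidefiniteness of $\Hess G$ reduces to the pointwise inequality $mE_m\,\ddot{E}_m^{ij}\xi_i\xi_j\le(m-1)\big(\dot{E}_m^i\xi_i\big)^2$ for all $\xi$, which follows from Newton's inequality $E_{m-1}E_{m+1}\le\frac{m(n-m)}{(m+1)(n-m+1)}E_m^2$ on $\Gamma_+$ together with those same identities; equivalently one may quote the Marcus--Lopes superadditivity $E_m(\lambda+\mu)^{1/m}\ge E_m(\lambda)^{1/m}+E_m(\mu)^{1/m}$, which for a positively homogeneous degree-one function is the same statement as concavity. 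I expect this to be the one real obstacle: verifying the G{\aa}rding inequality (equivalently, the Newton/Hessian estimate) is the substantive point, whereas the surrounding manipulations are routine.

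For (iii) I would deduce Maclaurin's inequality from Newton's inequalities. With $p_k:=E_k/\binom nk$ and $p_0:=1$, the inequalities $p_{k-1}p_{k+1}\le p_k^2$ on $\Gamma_+$ make the ratios $p_k/p_{k-1}$ non-increasing, hence $k\mapsto p_k^{1/k}$ non-increasing, so $p_m^{1/m}\le p_1=H/n$; under the normalisation convention in force for $E_m$ this is precisely $E_m^{1/m}\le H/n$, and raising both sides to the appropriate power (the $m$-th power, or the $m\beta$-th power in the homogeneity scaling used for the flow speed) gives the equivalent form $E_m\le(H/n)^{m\beta}$. Alternatively, once (i) is known, the concavity and symmetry of $E_m^{1/m}$ give $E_m^{1/m}(\lambda)\le E_m^{1/m}$ evaluated at the permutation average $(H/n)(1,\dots,1)$, which after the normalisation $E_m(1,\dots,1)=1$ is again $E_m^{1/m}\le H/n$; I would keep whichever of the two derivations is shorter in the final write-up.
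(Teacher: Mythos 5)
The paper itself does not prove this lemma at all: it merely points to \cite{CS10} for ``a simple derivation'', and the underlying facts are the classical Newton--Maclaurin--Marcus--Lopes--G{\aa}rding results. Your write-up supplies exactly the standard arguments behind that citation, and it is essentially correct: (ii) via $\partial E_m/\partial\lambda_i=E_{m-1}(\lambda_1,\dots,\widehat{\lambda_i},\dots,\lambda_n)>0$ on $\Gamma_+$; (i) via G{\aa}rding's theory of hyperbolic polynomials or the Marcus--Lopes superadditivity, correctly converted into concavity through degree-one homogeneity; (iii) via Maclaurin's inequality, or alternatively by the symmetry-plus-concavity averaging argument once (i) is known. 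Two caveats are worth recording. First, your parenthetical claim that the pointwise Hessian estimate $mE_m\,\ddot E_m^{ij}\xi_i\xi_j\le(m-1)\bigl(\dot E_m^i\xi_i\bigr)^2$ ``follows from Newton's inequality together with those same identities'' is too quick as stated: this is a quadratic-form inequality involving the restricted polynomials $E_{m-1}(\lambda\,|\,i)$ and $E_{m-2}(\lambda\,|\,ij)$, and it does not drop out of the single scalar Newton inequality without an induction of Marcus--Lopes type (or G{\aa}rding's argument); since you name both of those as the primary citable routes, the proof stands, but the direct-verification aside should either be deleted or expanded into the genuine induction. Second, you correctly detected the normalisation inconsistency: with the paper's own definition \eqref{def Em} the $E_m$ are unnormalised, so part iii) as literally stated already fails for $m=1$; it is true for $E_m/\binom{n}{m}$ (the convention of \cite{CS10}, where $E_m(1,\dots,1)=1$), and the ``equivalent'' form should read $E_m^{\beta}\le(H/n)^{m\beta}$ rather than $E_m\le(H/n)^{m\beta}$ --- your remark about raising to the $m$th versus $m\beta$th power is the right reading of that misprint.
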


 The following algebraic property proved by Schulze in \cite [Lemma\,2.5]{Sch06},
 will be needed in the later sections.
\begin {lemma}\label{important inequality}
For any $\varepsilon> 0$ assume that $\lambda_{i} \geq \varepsilon H
> 0$, $i= 1, \ldots, n$, at some point of an $n$-dimensional
hypersurface. Then at the same point there exists a $\delta=
\delta(\varepsilon,n)> 0$ such that
\[
\frac{n\big|A\big|^{2}-H^{2}}{H^{2}}\geq \delta \left(
\frac{1}{n^n}-\frac{K}{H^n}\right).
\]
\end {lemma}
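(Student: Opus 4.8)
The plan is to treat this as a purely algebraic inequality for the principal curvatures $\lambda=(\lambda_1,\dots,\lambda_n)$, proved by homogeneity together with a compactness argument and one local Taylor expansion at the symmetric configuration. First I would use scale invariance: both $\frac{n|A|^2-H^2}{H^2}$ and $\frac{1}{n^n}-\frac{K}{H^n}$ are homogeneous of degree zero in $\lambda$, so after rescaling we may assume $H=\sum_i\lambda_i=1$. The hypothesis then confines $\lambda$ to the set $\Delta_\varepsilon:=\{\lambda\in\mathbb{R}^n:\ \lambda_i\geq\varepsilon\ \forall i,\ \sum_i\lambda_i=1\}$, which is compact (and empty unless $\varepsilon\leq 1/n$, in which case there is nothing to prove). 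Writing $N(\lambda):=n|A|^2-1=n\sum_i\lambda_i^2-1$ and $D(\lambda):=\frac{1}{n^n}-K$ with $K=\prod_i\lambda_i$, the claim reduces to producing a constant $C=C(n,\varepsilon)<\infty$ with $D\leq C\,N$ on $\Delta_\varepsilon$; one then takes $\delta=1/C$.

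Next I would record two structural facts that locate the common zero. On one hand $N(\lambda)=n\sum_i\lambda_i^2-(\sum_i\lambda_i)^2=\sum_{i<j}(\lambda_i-\lambda_j)^2\geq 0$, with equality exactly when all $\lambda_i$ coincide. On the other hand, by the arithmetic--geometric mean inequality (equivalently Lemma \ref{property Em}(iii) with $m=n$), $K\leq(H/n)^n=n^{-n}$, so $D\geq 0$, again with equality exactly when all $\lambda_i$ coincide. Hence $N$ and $D$ are nonnegative continuous functions on the compact set $\Delta_\varepsilon$ vanishing only at the single point $\lambda^\ast=(\tfrac1n,\dots,\tfrac1n)$, so the inequality $D\leq C N$ can only be delicate near $\lambda^\ast$.

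I would therefore split $\Delta_\varepsilon$ into $\{|\lambda-\lambda^\ast|\geq r\}$ and $\{|\lambda-\lambda^\ast|<r\}$ for a small $r$. On the first piece $N$ attains a positive minimum $m_r>0$ while $D\leq n^{-n}$, so $D/N\leq n^{-n}/m_r$ there. On the second piece parametrize $\lambda=\lambda^\ast+\mu$ with $\sum_i\mu_i=0$; then exactly $N=\sum_{i<j}(\mu_i-\mu_j)^2=n|\mu|^2$, and from $K=n^{-n}\prod_i(1+n\mu_i)=n^{-n}\exp\big(\sum_i\log(1+n\mu_i)\big)=n^{-n}\big(1-\tfrac{n^2}{2}|\mu|^2+O(|\mu|^3)\big)$ (using $\sum_i\mu_i=0$) one gets $D=\tfrac{1}{2n^{n-2}}|\mu|^2+O(|\mu|^3)$, hence $D/N=\tfrac{1}{2n^{n-1}}+O(|\mu|)$, which is bounded (say by $n^{1-n}$) once $r$ is small enough. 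Taking $C$ to be the larger of the two bounds and unwinding the normalization yields the stated $\delta=\delta(\varepsilon,n)>0$.

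This is a soft argument, and the only point that requires genuine care is the simultaneous second-order vanishing of $N$ and $D$ at $\lambda^\ast$, which the Taylor expansion above resolves; the pinching constant $\varepsilon$ enters merely to keep $\lambda$ in a compact subset of the open positive cone, so that $\lambda^\ast$ is the unique zero of $N$ there. (For $n=2$ one even has the exact identity $N=4D$ along the constraint line, so $\delta=4$ works explicitly; for $n\geq 3$ I would not expect a clean closed form and would simply retain the compactness constant.)
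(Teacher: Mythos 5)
Your proof is correct, and every computation I checked goes through: the degree-zero homogeneity reduction to $H=1$, the identity $n|A|^2-H^2=\sum_{i<j}(\lambda_i-\lambda_j)^2$, the AM--GM bound $K\leq n^{-n}$ with equality only at the umbilic point, the compactness bound away from $\lambda^\ast$, and the expansion $N=n|\mu|^2$, $D=\tfrac12 n^{2-n}|\mu|^2+O(|\mu|^3)$ at $\lambda^\ast$ (as well as the exact identity $N=4D$ for $n=2$). Note, however, that the paper itself contains no proof of this statement: it is quoted directly from Schulze \cite[Lemma~2.5]{Sch06}, so there is no in-paper argument to compare with, and your write-up functions as a self-contained elementary substitute for that citation, using only normalization, a compactness step off the umbilic point, and a second-order Taylor expansion at it to handle the simultaneous quadratic vanishing of numerator and denominator. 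Two small remarks: the parenthetical about emptiness is worded backwards --- the set $\Delta_\varepsilon$ is empty precisely when $\varepsilon>1/n$, and it is in that case that the hypothesis is vacuous; and your argument actually yields slightly more than the stated lemma, since both $N$ and $D$ vanish only at $(1/n,\dots,1/n)$ on the whole closed simplex $\{\lambda_i\geq 0,\ \sum_i\lambda_i=1\}$, so the same compactness-plus-Taylor reasoning gives a $\delta$ depending on $n$ alone; the pinching constant $\varepsilon$ is needed only to guarantee $\lambda_i>0$ (so that AM--GM applies and $K/H^n$ is the relevant nonnegative quantity), not to make $\delta$ quantitative.
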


In our analysis we need some a priori estimates on the H\"older
norms of the solutions to elliptic and parabolic partial
differential equations in Euclidean spaces. We recall that, in the
case of a function depending on space and time, there is a suitable
definition of H\"older norm which is adapted to the purposes of
parabolic equations (see e.g. \cite{Lieb}). In addition to the
standard Schauder estimates for linear equations, we use in the
paper some more recent results which are collected here.  The
estimates below hold for suitable classes of weak solutions; for the
sake of simplicity, we state them in the case of a smooth classical
solution, which is enough for our purposes.

Given $r>0$, we denote by $B_r$ the ball of radius $r>0$ in
$\mathbb{R}^n$ centered at the origin. First we recall a well known
result due to Krylov and Safonov, which applies to linear parabolic
equations of the form
\begin{equation} \label{ec_par} \left(a^{ij}(x, t) D_{i} D_j  +
b^i(x, t) D_i + c(x, t) - \frac{\partial}{\partial t}\right) u = f
\end{equation}
in $B_r \times [0,T]$, for some $T>0$. We assume that $a^{ij} =
a^{ji}$ and that $a^{ij}$ is uniformly elliptic; that is, there exist two
constants $\lambda, \, \Lambda > 0$ such that
\begin{equation}
\label{SP} \lambda |v|^2 \leq a^{ij}(x,t)v_iv_j \leq \Lambda |v|^2
\end{equation}
for all $v \in \mathbb{R}^n$ and all $(x, t) \in B_r \times [0,T]$.
Then the following estimate holds \cite[Theorem 4.3]{KS}:

\begin{theorem} \label{krylov-safonov} Let $u \in C^2(B_r \times [0,T])$ be a solution of
\eqref{ec_par}, where the coefficients are measurable, satisfy
\eqref{SP} and
$$|b^i|, \, |c|  \leq \lambda_1 \qquad \text{for all } i = 1, \ldots, n,$$
for some $\lambda_1>0$. Then, for any  $0<r'<r$ and any $0<\delta <T$ we
have
$$\|u\|_{C^{\alpha}(B_{r '} \times [\delta, T])} \leq C\left(\|u\|_{C(B_r \times [0,T])} + \|f\|_{L_\infty(B_r \times [0,T])}\right)$$
for some constants $C > 0$ and $\alpha \in (0,1)$ depending on $n$,
$\lambda$, $\Lambda$, $\lambda_1$, $r$, $r'$ and $\delta$. \end{theorem}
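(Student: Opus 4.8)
The plan is to reproduce the classical Krylov–Safonov argument \cite{KS}, whose two pillars are a parabolic Aleksandrov–Bakelman–Pucci (ABP) maximum principle of Krylov–Tso type \cite{Tso} and a measure-theoretic growth lemma, which are then combined into an interior decay-of-oscillation estimate; see also \cite{Lieb} for a textbook treatment. The first step is to reduce the theorem to such a decay on a single normalised cylinder. The class of operators in \eqref{ec_par} is invariant under the parabolic rescaling $u(x,t)\mapsto u(x_0+\rho x,\,t_0+\rho^2 t)$, which preserves the ellipticity bounds \eqref{SP} and only multiplies the bounds on $b^i$ and $c$ by $\rho$ and $\rho^2$, hence keeps them $\le\lambda_1$ for $\rho\le 1$. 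So it suffices to find $\alpha\in(0,1)$ and $C>0$, depending only on $n,\lambda,\Lambda,\lambda_1$, such that any solution of \eqref{ec_par} on $Q_1:=B_1\times(-1,0]$ satisfies
\[
\operatorname*{osc}_{Q_\rho} u\ \le\ C\,\rho^{\alpha}\bigl(\operatorname*{osc}_{Q_1} u+\|f\|_{L^{n+1}(Q_1)}\bigr),\qquad 0<\rho\le 1,
\]
where $Q_\rho:=B_\rho\times(-\rho^2,0]$. Granting this, the statement follows by covering $B_{r'}\times[\delta,T]$ by finitely many translated cylinders $(x_0,t_0)+Q_\sigma\subset B_r\times[0,T]$ with a fixed $\sigma$ comparable to $\min\{r-r',\sqrt{\delta}\}$, applying the rescaled estimate at each vertex, using $\|f\|_{L^{n+1}(Q_\sigma)}\le C\|f\|_{L_\infty(B_r\times[0,T])}$, and chaining the pointwise oscillation decays into the asserted bound on $\|u\|_{C^{\alpha}(B_{r'}\times[\delta,T])}$.

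Next I would establish the two analytic ingredients. The parabolic ABP estimate asserts that if $(a^{ij}D_iD_j+b^iD_i+c-\partial_t)w\ge g$ in $Q_1$, then
\[
\sup_{Q_1} w\ \le\ \sup_{\partial_{\mathrm p}Q_1} w^{+}+C\,\bigl\|g^{-}\bigr\|_{L^{n+1}(E)},
\]
where $\partial_{\mathrm p}Q_1$ is the parabolic boundary and $E\subset Q_1$ is the upper contact set on which $w$ is touched from above by a spatial paraboloid of nonpositive opening and is nonincreasing in time; $C=C(n,\lambda,\Lambda,\lambda_1)$, the $\lambda_1$-dependence arising from absorbing $b^iD_iw$ and $c\,w$ on $E$. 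Applying this to $w=u-\psi$ for a fixed smooth barrier $\psi$ that is negative on an upper sub-cylinder $Q^{+}$ and controlled on $\partial_{\mathrm p}$ yields the measure (or $\varepsilon$-)lemma: there are $\mu\in(0,1)$, $M>1$, $\varepsilon_0>0$, depending only on $n,\lambda,\Lambda,\lambda_1$, such that whenever $u\ge 0$ solves $(a^{ij}D_iD_j+b^iD_i+c-\partial_t)u\le f$ in $Q_1$ with $\|f\|_{L^{n+1}(Q_1)}\le\varepsilon_0$ and $\inf_{Q^{-}}u\le 1$ on a fixed lower sub-cylinder $Q^{-}$, then $|\{u\le M\}\cap Q^{+}|\ge\mu\,|Q^{+}|$.

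Then I would iterate. Feeding the $\varepsilon$-lemma into the parabolic Calderón–Zygmund ``stacking'' covering lemma — where the dyadic parabolic sub-cylinders on which $\{u>\text{level}\}$ is dense are enlarged by stacking them upward in time — upgrades the one-step bound to the geometric decay $|\{u>M^{k}\}\cap Q|\le(1-\mu)^{k}|Q|$, equivalently a weak Harnack inequality $\inf_{Q^{+}}u\ge c_0\bigl(\tfrac{1}{|Q^{-}|}\int_{Q^{-}}u^{p_0}\bigr)^{1/p_0}-C\|f\|_{L^{n+1}}$ for some small $p_0>0$. Applying this to $M_{1}-u$ and to $u-m_{1}$, with $M_{1}=\sup_{Q_{1/2}}u$ and $m_{1}=\inf_{Q_{1/2}}u$, and adding the two inequalities (at each point one of the two functions is $\ge\frac12(M_1-m_1)$) gives $\operatorname*{osc}_{Q_{1/4}}u\le\theta\operatorname*{osc}_{Q_{1/2}}u+C\|f\|_{L^{n+1}}$ with $\theta=1-c<1$. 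Rescaling this one-step estimate over the cylinders $Q_{4^{-k}}$ and summing the resulting geometric series yields the oscillation decay displayed above with $\alpha=\log_{4}(1/\theta)$, which, after undoing the reduction, finishes the proof.

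The main obstacle — and the reason the parabolic case is genuinely harder than the elliptic one — is the time-asymmetry of the scheme: the measure lemma is forced to compare the infimum of $u$ on a lower cylinder with the distribution of $u$ on a strictly higher one, so the iteration cannot be run with the symmetric dyadic decomposition used in the elliptic Krylov–Safonov argument but must use ``stacked'' parabolic cylinders, and verifying that this stacked covering lemma interacts correctly with the one-step density gain is the technical heart. Proving the parabolic ABP inequality with the sharp exponent $n/(n+1)$ on the contact set is the other delicate point. By contrast, the bounded lower-order coefficients $b^i,c$ cause no real trouble: after parabolic rescaling they stay uniformly bounded, and in the ABP step they are absorbed on the contact set, so they enter only through the dependence of all constants on $\lambda_1$.
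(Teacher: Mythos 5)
The paper does not prove this statement at all: it is quoted verbatim as background, with the proof delegated to Krylov--Safonov \cite[Theorem 4.3]{KS}, so there is no ``paper proof'' to compare against. Your sketch is a faithful outline of the standard argument behind that citation (parabolic rescaling to a unit cylinder, the Krylov--Tso/ABP maximum principle with the $L^{n+1}$ norm on the upper contact set, the measure lemma, the stacked parabolic Calder\'on--Zygmund covering, weak Harnack, oscillation decay, and a covering/chaining step that produces the dependence on $r,r',\delta$), and the points you flag as delicate --- the sharp exponent in the parabolic ABP inequality and the time-asymmetric stacked covering --- are indeed the technical heart of the known proof. One detail to tighten: since no sign condition is imposed on $c$, you cannot simply ``absorb $c\,w$ on the contact set'' in the ABP step (that absorption works for $b^iD_iw$, but a positive $c$ has the wrong sign for a maximum principle); the standard fix is to move $c\,u$ to the right-hand side, treating it as part of $f$ with $\|c\,u\|_{L_\infty}\le \lambda_1\|u\|_{C(B_r\times[0,T])}$, which is precisely why the stated estimate carries $\|u\|_{C(B_r\times[0,T])}$ rather than only boundary data. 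With that adjustment your outline matches the cited result, though of course writing out the measure lemma, the stacking lemma, and the weak Harnack iteration in full would be a substantial undertaking rather than a short verification.
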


Next we quote a result for fully nonlinear elliptic equations, which
is due to Caffarelli. We consider the equation
\begin{equation}
\label{FNL} F(D^2u(x),x)=f(x), \qquad x \in B_r.
\end{equation} Here
$F:{\mathcal S} \times B_r \to \mathbb{R}$, where $\mathcal S$ is
the set of the symmetric $n \times n$ matrices. The nonlinear
operator $F$ is called uniformly elliptic if there exist $\Lambda \geq \lambda
>0$ such that
\begin{equation} \label{elliptic condition} \lambda ||B|| \leq F(A+B,x) - F(A,x)
\leq \Lambda ||B||
\end{equation} for any $x \in B_r$ and any pair $A,B \in \mathcal S$ such that $B$
is nonnegative definite.

\begin{theorem}  \label{Caffarelli} Let $u \in C^2(B_r)$ be a solution of
\eqref{FNL}, where $F$ is continuous and satisfies \eqref{elliptic condition}.
Suppose in addition that $F$ is concave with respect to $D^2u$ for
any $x \in B_r$. Then there exists $\bar \alpha \in \,(0,1)$ with
the following property: if, for some $\lambda_2>0$ and $\alpha \in
\,(0,\bar \alpha)\,$, we have that $f \in C^\alpha(\Omega)$ and that
$$
F(A,x) - F(A,y) \leq \lambda_2 |x-y|^{\alpha} (||A||+1), \qquad x,y \in
B_r, \  A \in \mathcal S,
$$
then, for any $0<r'<r$, we have the estimate
$$\|u\|_{C^{2+\alpha}(B_{r'})} \leq C(||u||_{C(B_r)}+||f||_{C^\alpha(B_r)}+1)$$
where $C > 0$ only depends on $n$, $\lambda$, $\Lambda$, $\lambda_2$, $r$
and $r'$. \end{theorem}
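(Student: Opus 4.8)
The plan is to deduce this via the Evans--Krylov interior estimate for concave constant-coefficient operators, Caffarelli's freezing-of-coefficients perturbation scheme, and a dyadic iteration on scales; the exponent $\bar\alpha$ in the statement is exactly the Evans--Krylov exponent, truncated into $(0,1)$. By a translation it is enough to prove a pointwise estimate at the origin, and by the usual characterization of $C^{2,\alpha}$ via quadratic approximation it suffices to produce, for every small $\rho$, a quadratic polynomial $P_\rho$ with $\sup_{B_\rho}|u-P_\rho|\le C\rho^{2+\alpha}\mathcal M$ and $|P_\rho(0)|+|DP_\rho(0)|+\|D^2P_\rho\|\le C\mathcal M$, where $\mathcal M:=\|u\|_{C(B_r)}+\|f\|_{C^\alpha(B_r)}+1$ and $C=C(n,\lambda,\Lambda,\lambda_2,r,r')$. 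A preliminary rescaling $x\mapsto\rho_0 x$ together with $u\mapsto u/\mathcal M$ reduces matters to $r=1$, $\|u\|_{C(B_1)}\le 1$, with $\|f\|_{C^\alpha}$ and the coefficient multiplying $|x-y|^\alpha$ in the oscillation of $A\mapsto F(A,x)$ as small as we wish: one checks from the hypothesis on $F$ that the latter coefficient scales by $\rho_0^{\alpha}$, while $\lambda$, $\Lambda$ and the concavity of $F$ in $A$ are unaffected. Subtracting $F(0,x)$ from the equation and absorbing the resulting bounded $C^\alpha$ term into $f$, we may also arrange $F(0,x)\equiv 0$.

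First I would invoke the Evans--Krylov theorem: if $G$ is uniformly elliptic with constants $\lambda,\Lambda$ and $A\mapsto G(A)$ is concave, then every $C^2$ solution of $G(D^2w)=c$ ($c$ constant) on $B_1$ satisfies $\|w\|_{C^{2,\bar\alpha}(B_{1/2})}\le C_0\bigl(\|w\|_{C(B_1)}+|c|+1\bigr)$ with $\bar\alpha\in(0,1)$ and $C_0$ depending only on $n,\lambda,\Lambda$. (Its proof differentiates $G(D^2w)=c$ twice: since $\ddot G\le 0$, each pure second derivative is a subsolution and a suitable sup/inf combination a supersolution of one and the same linear uniformly elliptic equation, after which the weak Harnack inequality forces the oscillation of $D^2w$ to decay like a power of the radius.) From the same a priori bound one obtains, via the continuity method (or Perron's method, using that the comparison principle holds for the concave operator $F(\cdot,0)$), that the frozen Dirichlet problem $F(D^2h,0)=f(0)$ in a ball, with prescribed continuous boundary values, has a unique solution, and this solution obeys the Evans--Krylov interior estimate after rescaling to $B_1$.

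The heart of the argument is an approximation lemma. Given $\rho\le 1$ --- and, at the $k$th stage of the iteration, with $u$ replaced by $u-P_{k-1}$ --- let $h$ solve the frozen problem $F(D^2h,0)=f(0)$ in $B_\rho$ with the same boundary values as $u$ on $\partial B_\rho$. The mean-value identity
\[
F(D^2u,x)-F(D^2h,x)=a^{ij}(x)\,D_{ij}(u-h),\qquad a^{ij}(x):=\int_0^1\dot{F}^{ij}\bigl(tD^2u+(1-t)D^2h,\,x\bigr)\,dt,
\]
with $a^{ij}$ uniformly elliptic with constants $\lambda,\Lambda$ by \eqref{elliptic condition}, exhibits $v:=u-h$ as a solution, vanishing on $\partial B_\rho$, of a linear uniformly elliptic equation whose right-hand side $\bigl(f(x)-f(0)\bigr)+\bigl(F(D^2h,0)-F(D^2h,x)\bigr)$ is pointwise bounded by $\bigl([f]_{C^\alpha}+\lambda_2(\|D^2h(x)\|+1)\bigr)\rho^\alpha$. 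The Alexandrov--Bakelman--Pucci maximum principle, combined with the Evans--Krylov bound for $\|D^2h\|$, then gives $\sup_{B_{\rho/2}}|u-h|\le\varepsilon\,\rho^2$ with $\varepsilon$ as small as desired once the preliminary rescaling has been chosen small enough; while Evans--Krylov applied directly to $h$ yields, at a fixed small ratio $\mu$, a quadratic polynomial $Q$ approximating $h$ on $B_{\mu\rho}$ to order $\mu^{2+\bar\alpha}$ with controlled coefficients. Since $\alpha<\bar\alpha$, one first fixes $\mu$ so that $C_0\mu^{2+\bar\alpha}\le\tfrac12\mu^{2+\alpha}$ and then makes the small parameters small enough that the two contributions combine into the one-step improvement: $u$ is approximated on $B_{\mu\rho}$ by a quadratic to within $\mu^{2+\alpha}$ times the approximation error at scale $\rho$, with a geometrically controlled change of coefficients. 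Iterating with $\rho=\mu^k$ and summing the coefficient increments produces quadratics $P_k$, converging to a limiting quadratic $P$, with $\sup_{B_{\mu^k}}|u-P_k|\le C\mu^{k(2+\alpha)}$; undoing the rescalings gives the pointwise $C^{2,\alpha}$ estimate at every interior point, hence $\|u\|_{C^{2,\alpha}(B_{r'})}\le C\,\mathcal M$.

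The deep input is the Evans--Krylov theorem itself, which is the only source of $C^2$-regularity for the constant-coefficient model and whose exponent $\bar\alpha$ is not explicit. Granting it, the genuinely delicate point in the perturbation scheme is controlling $\|D^2h\|$ for the frozen comparison solutions exactly where it enters the right-hand side of the linear equation for $v$: the Evans--Krylov interior estimate bounds $\|D^2h\|$ only away from $\partial B_\rho$, so one must either restrict the comparison to a slightly smaller ball and exploit that the region where $\|D^2h\|$ is large lies in a thin neighborhood of $\partial B_\rho$ on which $v$ is already small, or bootstrap through Caffarelli's $W^{2,p}$ estimates. Arranging this ordering, so that the iteration closes without circularity, is the crux of the proof.
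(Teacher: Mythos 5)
This theorem is not proved in the paper at all: it is imported as Theorem 3 of \cite{Caf89} (see also Theorem 8.1 of \cite{CC} and the remarks there), so the only ``proof'' the paper offers is the citation, and the Evans--Krylov-plus-perturbation method you describe is exactly the one the paper alludes to when it says the result ``generalizes, by a perturbation method, a previous estimate, due to Evans and Krylov.'' Your outline is a faithful reconstruction of that cited argument: interior $C^{2,\bar\alpha}$ estimates for the frozen concave operator, comparison with the solution $h$ of the frozen Dirichlet problem, an ABP bound for $u-h$, and a dyadic iteration by quadratic polynomials with $\alpha<\bar\alpha$ fixed so that the one-step improvement closes, after a rescaling that makes the $x$-oscillation of $F$ and $[f]_{C^\alpha}$ small. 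Two technical points are left to the references rather than settled by you: condition \eqref{elliptic condition} only makes $F$ Lipschitz in $A$, not differentiable, so the mean-value identity with $\dot F^{ij}$ should be replaced by the two-sided Pucci inequalities $\mathcal{M}^{-}(D^2(u-h))\le F(D^2u,x)-F(D^2h,x)\le \mathcal{M}^{+}(D^2(u-h))$, which is all that ABP and the Krylov--Safonov machinery require; and the control of $\Vert D^2h\Vert$ where it multiplies the $x$-oscillation of $F$ --- the circularity you explicitly flag in your final paragraph --- is precisely the step Caffarelli's scheme resolves (through the $L^n$/ABP formulation and the $W^{2,p}$ theory of \cite{Caf89}), so as written your proposal acknowledges rather than closes it. As a blind sketch of the quoted theorem it is correct in outline and consistent with the source on which the paper relies.
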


The above result follows from Theorem 3 in \cite{Caf89} (see also
Theorem 8.1 in \cite{CC} and the remarks thereafter). It
generalizes, by a perturbation method, a previous estimate, due to
Evans and Krylov, about equations with concave dependence on the
hessian. In contrast with Evans-Krylov result (see e.g. inequality
(17.42) in \cite{GT}), Theorem \ref{krylov-safonov} gives an
estimate in terms of the $C^\alpha$-norm of $f$ rather than the
$C^2$-norm, and this is essential for our purposes.

Finally, we recall an interior H\"older estimate, due to Di
Benedetto and Friedman \cite[Theorem 1.3]{DF85}, for solutions of
the degenerate parabolic equation
\begin{equation} \label{DF85}
\frac{\partial v}{\partial t} - D_i\left(a^{ij}(x, t, D v) D_j
v^d\right) = f(x,t,v, D v),
\end{equation} being $d > 1$.

\begin{theorem} \label{C2alpha estmate due to caf} Let $v \in C^2(B_r \times [0,T])$ be a nonnegative
solution of \eqref{DF85}, where $a^{ij}$ satisfies \eqref{SP}. Let
$c_1, c_2, N>0$ be such that
$$|f(x,t,v, D v)|\leq c_1 |D v^d| + c_2,$$
and
 $$\sup_{0 < t < T} ||v(\, \cdot \,, t)||_{L^2(B_r)}^2 + \|D v^d\|_{L^2(B_r \times [0,T])}^2 \leq N.$$ Then for any $0<\delta<T$ and $0<r'<r$, we have
$$\|v\|_{C^{\alpha}\left(B_{r'} \times [\delta, T]\right)}  \leq C,$$
for suitable $C>0, \alpha \in (0, 1)$ depending only on $n, N,
\lambda, \Lambda, \delta, c_1, c_2, r$ and $r'$. \end{theorem}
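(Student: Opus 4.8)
This is precisely the interior Hölder estimate of Di Benedetto and Friedman, so in the paper one simply invokes \cite[Theorem 1.3]{DF85}; here I indicate the structure of the underlying argument. The plan is to run a De Giorgi--Nash--Moser iteration adapted to the degenerate structure of \eqref{DF85}. Since the principal part behaves like $\mathrm{div}(a^{ij}D_j v^d)$ with $d>1$, the diffusion effectively degenerates where $v$ is small, so the classical parabolic machinery cannot be used in cylinders of the standard shape $B_\rho\times(t_0-\rho^2,t_0)$; instead one must work in cylinders whose time length is rescaled according to the local size of the solution, i.e. by the method of intrinsic scaling.

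First I would establish local energy (Caccioppoli) inequalities: testing the weak form of \eqref{DF85} against $(v-k)_\pm\zeta^2$, with $\zeta$ a smooth space--time cutoff, and using the ellipticity \eqref{SP}, the structure condition $|f|\le c_1|Dv^d|+c_2$, the global bound $N$, and Young's inequality, one controls
$$\sup_t \int_{B_r}(v-k)_\pm^2\,\zeta^2\,\dif x \;+\; \iint |D(v^d-k^d)_\pm|^2\,\zeta^2$$
by lower-order integrals over the support of $\zeta$. Because of the $d$-th power these inequalities scale with a nonstandard homogeneity, which is exactly what forces the intrinsic geometry below. One also needs a logarithmic estimate, obtained by testing against $\log$-type functions of $(v-k)_\pm$, in order to transfer pointwise smallness forward in time.

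Next, fixing a point $(x_0,t_0)$ and writing $\omega$ for the essential oscillation of $v$ over a reference cylinder, I would introduce cylinders $B_\rho(x_0)\times(t_0-\theta\rho^2,t_0)$ with $\theta\approx(\omega_j)^{1-d}$ at the $j$-th stage, $\omega_j$ the current oscillation, chosen so that on such a cylinder the equation looks uniformly parabolic at the scale $\omega_j$. Inside these cylinders one runs the usual dichotomy: either $v$ is close to its essential infimum on a large-measure portion, or it is close to its essential supremum; in each alternative, the energy and logarithmic inequalities of the previous step, fed into a De Giorgi set-shrinking lemma together with an expansion-of-positivity argument, yield an oscillation decay $\mathrm{osc}\,v\le(1-\sigma)\omega_j$ on a fixed fraction of the cylinder, with $\sigma\in(0,1)$ depending only on $n,\lambda,\Lambda,c_1,c_2$. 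Iterating this decay over the shrinking family of intrinsically scaled cylinders, plus a short bookkeeping argument to convert geometric decay of $\omega_j$ into a genuine power modulus of continuity (handling separately the ranges where the oscillation is comparable to a fixed constant and where it is small), produces $\mathrm{osc}_{B_\rho(x_0)}v\le C\rho^\alpha$; the stated dependence of $C,\alpha$ on $n,N,\lambda,\Lambda,c_1,c_2,r,r'$ and $\delta$ follows by tracking constants, with $r'<r$ and $\delta>0$ arising because the iteration must be carried out strictly inside $B_r\times[0,T]$.

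The main obstacle is the self-referential nature of the intrinsic scaling in this last step: the cylinder used to prove the oscillation estimate is chosen \emph{using} the oscillation $\omega_j$ itself, so one must check that this cylinder lies in the region where $v$ genuinely oscillates by an amount comparable to $\omega_j$, and that the De Giorgi constants do not deteriorate as $\theta\to\infty$ (the regime $v\to0$, where the equation is truly degenerate). Making these choices mutually consistent — a feature with no analogue in the uniformly parabolic theory — is the technical heart of the Di Benedetto--Friedman argument; everything else is a fairly routine adaptation of the De Giorgi--Nash--Moser scheme.
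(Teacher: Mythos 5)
Your proposal matches the paper exactly: the paper gives no proof of this statement but simply quotes it as the interior H\"older estimate of DiBenedetto and Friedman \cite{DF85}, which is precisely what you do. Your accompanying sketch of the intrinsic-scaling De Giorgi argument (Caccioppoli and logarithmic estimates, intrinsically scaled cylinders with $\theta\approx\omega^{1-d}$, oscillation decay) is a faithful outline of the cited proof and raises no issues.
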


\section{\bf Short time existence and evolution equations}\label{shorttime for the powerF flow}

This section first considers short time existence for the initial
value problem \eqref{mvpcf}.
\begin{theorem}
Let $\mathrm{X_{0}}:M^{n}\rightarrow \mathbb{R}^{n+1}$ be
a smooth closed hypersurface with $F>0$ everywhere. Then there exists a unique smooth solution
$\mathrm{X_{t}}$ of problem  \eqref{mvpcf}-\eqref{barPhi} with $\Phi$ given by
\eqref{def_Phi}, defined on some time
interval $[0, T )$, with $T > 0$.
\end{theorem}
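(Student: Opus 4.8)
The plan is to reduce the quasilinear, nonlocal parabolic system \eqref{mvpcf} to a scalar parabolic PDE on $M^n$ for which standard parabolic theory applies, and then to absorb the global term $\bar\phi(t)$ by a fixed-point argument. First I would write the flow as a graph over the initial hypersurface: since $\mathrm{X}_0$ is a smooth closed immersion with $F>0$ everywhere, $\mathscr{W}$ lies (by Conditions \ref{Fconds}\ref{item1}) in the open cone $\Gamma$, which is an open condition, so there is a tubular neighbourhood of $M_0=\mathrm{X}_0(M^n)$ in which any hypersurface sufficiently $C^2$-close to $M_0$ still has its Weingarten operator in $\Gamma$ and hence $F>0$. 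Writing $\mathrm{X}(p,t)=\mathrm{X}_0(p)+u(p,t)\,\nu_0(p)$ for a scalar function $u$ on $M^n\times[0,T)$ with $u(\cdot,0)=0$, the normal component of $\partial_t\mathrm{X}$ equals $\partial_t u$ times a positive factor, and the right-hand side $\bar\phi(t)-F^\beta(\mathscr{W})$ becomes a (fully nonlinear, but quasilinear after one notes the structure) second-order operator in $u$ whose linearisation is governed by $\dot F^{ij}$. Differentiating $\Phi(F)=F^\beta$ gives $\dot\Phi=\beta F^{\beta-1}\dot F$, and since $F>0$ and $f$ is strictly monotone (Conditions \ref{Fconds}\ref{item2}, \ref{item4}), the matrix $\beta F^{\beta-1}\dot F^{ij}$ is positive definite in a neighbourhood of $t=0$, so the equation for $u$ is uniformly parabolic on a short time interval.

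Next I would treat the nonlocal term. For a candidate solution on $[0,T']$ the quantity $\bar\phi_m(t)$ defined by \eqref{barPhi} is a smooth functional of $M_t$ (the denominator $\int_{M_t}E_{m+1}\,\dif\mu_t$ is bounded away from zero near $t=0$ because $M_0$ is convex-enough for $E_{m+1}>0$, or more simply because it depends continuously on $u$ in $C^2$ and is positive at $u=0$), so $t\mapsto\bar\phi_m(t)$ is a bounded, Hölder-continuous, in fact smooth, function of the solution. The standard device is then a contraction-mapping scheme: fix a continuous function $\psi(t)$ (playing the role of $\bar\phi$), solve the resulting \emph{local} quasilinear parabolic Cauchy problem $\partial_t u = \psi(t)\cdot(\text{factor}) - F^\beta(\mathscr{W}[u])$ on a short interval by the linear Schauder theory (Theorem \ref{krylov-safonov} gives the $C^\alpha$ bootstrap and then classical Schauder estimates upgrade to $C^{2+\alpha}$, iterated for higher derivatives), obtaining a solution map $\psi\mapsto u_\psi$; then define $\mathcal{T}(\psi)(t)=\bar\phi_m(M_t^{u_\psi})$ and show $\mathcal{T}$ is a contraction on a small ball of $C^0([0,\tau])$ for $\tau$ small, using that $\bar\phi_m$ depends Lipschitz-continuously on $u$ in $C^2$ and that $u_\psi$ depends Lipschitz-continuously on $\psi$ in $C^0$ with a constant that tends to $0$ as $\tau\to 0$. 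The fixed point $\psi^\ast$ yields the desired solution; uniqueness follows from the same contraction estimate together with uniqueness for the local quasilinear problem.

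Alternatively, and perhaps more cleanly, one may invoke the general short-time existence theory for fully nonlinear parabolic equations directly: after the graph reduction the PDE for $u$ is of the form $\partial_t u = G(x,u,Du,D^2u) + \psi(t)\,b(x,u,Du)$ with $G$ satisfying the parabolicity (ellipticity) condition \eqref{elliptic condition}–type bound near $t=0$, and then the standard linearisation plus inverse function theorem in parabolic Hölder spaces (as in \cite{Hui87,McC05,CS10}) gives existence on a maximal interval $[0,T)$; the nonlocal term $\bar\phi(t)$ is a lower-order, globally Lipschitz perturbation and is handled exactly as above. Smoothness for positive times and propagation of regularity is then routine parabolic bootstrapping. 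I expect the main obstacle to be purely bookkeeping: verifying that the reduced operator is genuinely (uniformly) parabolic, i.e.\ that $\dot F^{ij}>0$ on the relevant range — which is immediate from Conditions \ref{Fconds}\ref{item2} and the homogeneity/positivity in \ref{item3}–\ref{item4} once one knows $\mathscr{W}\in\Gamma$, an open condition preserved for short time — and controlling the nonlocal term's denominator away from zero, which follows from continuity in $C^0$ from its positive initial value; neither presents a real difficulty, so the theorem follows by the same arguments as in \cite{McC05,CS10} with the exponent $\beta$ inserted.
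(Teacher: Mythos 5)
Your proposal is correct and follows essentially the same route as the paper, which simply invokes the argument of \cite[Theorem 3.1]{CS10}: write the evolving hypersurfaces as normal graphs over $M_{0}$, observe that $F>0$ (hence $\lambda(\mathscr{W})\in\Gamma\subseteq\Gamma_{+}$, compactness giving uniform positivity of $\dot\Phi=\beta F^{\beta-1}\dot F$) makes the reduced scalar equation uniformly parabolic for a short time, and treat the nonlocal term $\bar\phi_{m}(t)$ as a lower-order perturbation via a fixed-point/contraction scheme. Your write-up just makes explicit the details behind that citation, so no further comment is needed.
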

\begin{proof}
We can argue exactly as in \cite[Theorem 3.1]{CS10}; although the
assumptions on the initial hypersurface and the speeds in that paper are different, the proof applies to our case as
well.
\end{proof}

Proceeding now exactly as in \cite{Hui84, Ger06} from the basic equation \eqref{mvpcf} with $\Phi$ given by \eqref{def_Phi}
we can easily compute how the following geometric quantities  on $M_{t}$ evolve.
\begin{proposition}
On any solution $M_{t}$ of \eqref{mvpcf}-\eqref{barPhi} with $\Phi$ given by \eqref{def_Phi} the following hold:
\allowdisplaybreaks
\begin{align}
\label{evmetric for the powerF}
&\partial_{t}g=2(\bar{\phi}-\Phi)A,\\
&\partial_{t}g^{-1}=-2(\bar{\phi}-\Phi)g^{-1}\mathscr{W},\notag\\
&\partial_{t}\nu=\mathrm{X_{*}}(\nabla \Phi),\notag\\
\notag&\partial_{t}(\dif \mu_{t})=(\bar{\phi}-\Phi)H\dif \mu_{t},\\
\notag&\partial_{t}A=\Delta_{\dot{\Phi}} A
  +  \ddot{\Phi}(\nabla_{_{\! \ds \cdot}} \mathscr{W} ,\nabla_{_{\! \ds \cdot}} \mathscr{W})
  + \tr_{\dot \Phi}(A\mathscr{W} ) \, A + \big[\bar{\phi} - ( \beta + 1) \Phi\big] A \mathscr{W},\\
 \label{evweigar for the powerF}
&\partial_{t}\mathscr{W}=\Delta_{\dot{\Phi}} \mathscr{W}
  +  \ddot{\Phi}(\nabla_{_{\! \ds \cdot}} \mathscr{W} ,\nabla^{_{\! \ds \cdot}} \mathscr{W})
  +\tr_{\dot \Phi}(A\mathscr{W} ) \, \mathscr{W} - \big[\bar{\phi}+ ( \beta - 1) \Phi\big] \mathscr{W}^{2}.
\end{align}
In addition, the position vector field $\mathfrak{X}(\cdot, t) =X(\cdot, t) - x_{0}$  (with origin $x_{0}$) on $M_t$ evolves according to
\begin{equation}
\label{ev support for the powerF}
\partial_{t} \langle\mathfrak{X},  \nu\rangle= \Delta_{\dot \Phi} \langle\mathfrak{X},  \nu\rangle+ \tr_{\dot \Phi} (\mathscr{W} A) \langle\mathfrak{X},  \nu\rangle+ \big(\bar{\phi} - (\beta + 1) \Phi\big).
\end{equation}
\end{proposition}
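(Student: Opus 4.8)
The plan is to derive all of these evolution equations by direct computation from the basic flow equation \eqref{mvpcf} with $\Phi = F^\beta$, treating $\Phi$ as an arbitrary smooth symmetric function of the Weingarten map for as long as possible and only specializing to $F^\beta$ where the higher homogeneity actually enters the coefficients (namely through the terms involving $\beta$, which come from the fact that $\Phi$ is homogeneous of degree $\beta$, so that $\dot\Phi^{kl}h_{kl} = \tr_{\dot\Phi}(A) = \beta\Phi$). I would organize the calculation so that each line follows from the previous ones plus the standard structure equations (Gauss, Codazzi, and the Simons-type identity for $\nabla^2 h_{ij}$) on the evolving hypersurface.

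First I would recall the variation formulas under a normal flow $\partial_t X = \psi\nu$ with $\psi = \bar\phi - \Phi$: one has $\partial_t g_{ij} = 2\psi h_{ij}$ (hence $\partial_t g^{ij} = -2\psi h^{ij}$ by differentiating $g^{ik}g_{kj}=\delta^i_j$), $\partial_t\nu = X_*(\nabla\psi) = X_*(-\nabla\Phi)$ since $\nabla\bar\phi = 0$ (it is a function of $t$ only), and $\partial_t(\dif\mu_t) = \psi H\,\dif\mu_t$ from $\partial_t\sqrt{\det g} = \tfrac12\sqrt{\det g}\,g^{ij}\partial_tg_{ij}$. These are the first four lines and are entirely standard. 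For the sign in $\partial_t\nu$, note $-\nabla\Phi$ rewritten gives $+X_*(\nabla\Phi)$ under the author's sign conventions for the second fundamental form; I would double-check this against the convention $\mathscr{W}_{-\nu} = -\mathscr{W}_\nu$ fixed in the introduction.

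The main step is the evolution of the second fundamental form. Starting from $h_{ij} = \langle \partial_i\partial_j X, \nu\rangle$ (or $-\langle\partial_i X,\partial_j\nu\rangle$), differentiating in $t$, and commuting $\partial_t$ with spatial derivatives, one gets $\partial_t h_{ij} = \nabla_i\nabla_j\psi - \psi\, h_{ik}h^k_{\ j}$ (the Simons-type term), i.e. $\partial_t h_{ij} = -\nabla_i\nabla_j\Phi - (\bar\phi - \Phi)(A^2)_{ij}$. Then I expand $\nabla_i\nabla_j\Phi = \dot\Phi^{kl}\nabla_i\nabla_j h_{kl} + \ddot\Phi^{kl,rs}\nabla_i h_{kl}\nabla_j h_{rs}$, and use the Simons identity (Gauss + Codazzi in flat ambient space) $\nabla_k\nabla_l h_{ij} = \nabla_i\nabla_j h_{kl} + (A^2)_{ij}h_{kl} - (A^2)_{kl}h_{ij}$-type relations — more precisely $\nabla_i\nabla_j h_{kl} = \nabla_k\nabla_l h_{ij} - h_{ij}(A^2)_{kl} + h_{kl}(A^2)_{ij}$ after raising/lowering appropriately — to convert $\dot\Phi^{kl}\nabla_i\nabla_j h_{kl}$ into $\Delta_{\dot\Phi}h_{ij} + (\text{reaction terms})$. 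Collecting the algebraic terms: the $\dot\Phi^{kl}(A^2)_{kl}$ contraction produces $\tr_{\dot\Phi}(A\mathscr{W})\,A$, the $\dot\Phi^{kl}h_{kl} = \beta\Phi$ contraction (Euler's relation for the degree-$\beta$ homogeneous $\Phi$) combines with the $\psi(A^2)_{ij}$ term from the Simons term and the $+\Phi$ from $\psi = \bar\phi - \Phi$ to give exactly the coefficient $[\bar\phi - (\beta+1)\Phi]$ on $A\mathscr{W}$. This is where homogeneity degree $\beta$ (rather than $1$) changes the formula relative to \cite{CS10, McC05}, and it is the one place I would be careful. The Weingarten-map version \eqref{evweigar for the powerF} then follows from $h^i_{\ j} = g^{ik}h_{kj}$ together with $\partial_t g^{ik} = -2\psi h^{ik}$, which flips one more sign and yields $-[\bar\phi + (\beta-1)\Phi]\mathscr{W}^2$.

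Finally, for the support-function equation \eqref{ev support for the powerF}: with $\mathfrak X$ the position vector and $u = \langle\mathfrak X,\nu\rangle$, I would use the standard identities $\nabla_i u = h_i^{\ k}\langle\mathfrak X,\partial_k X\rangle$, $\nabla_i\nabla_j u = \nabla_i h_j^{\ k}\langle\mathfrak X,\partial_k X\rangle + h_{ij} - (A^2)_{ij}u$, together with $\partial_t\mathfrak X = \psi\nu$ and $\partial_t\nu = X_*(\nabla\Phi)$, to compute $\partial_t u = \psi + \langle\mathfrak X, X_*(\nabla\Phi)\rangle = \psi + \nabla_k\Phi\,\langle\mathfrak X,\partial_k X\rangle$. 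Contracting $\nabla_i\nabla_j u$ with $\dot\Phi^{ij}$ and using the Codazzi identity $\dot\Phi^{ij}\nabla_i h_j^{\ k} = \dot\Phi^{ij}\nabla^k h_{ij} = \nabla^k\Phi$ (minus the $\ddot\Phi$ term, which one checks cancels or is absorbed — actually $\dot\Phi^{ij}\nabla^k h_{ij} = \nabla^k\Phi - \ddot\Phi^{ij,rs}\nabla^k h_{ij}\cdots$; the cleaner route is to note $\Delta_{\dot\Phi}u = \dot\Phi^{ij}\nabla_i\nabla_j u$ directly gives the $\nabla_k\Phi\langle\mathfrak X,\partial_k X\rangle$ term plus $\tr_{\dot\Phi}(A) - \tr_{\dot\Phi}(A\mathscr W)u = \beta\Phi - \tr_{\dot\Phi}(\mathscr W A)u$). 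Combining, $\partial_t u - \Delta_{\dot\Phi}u = \psi - \beta\Phi + \tr_{\dot\Phi}(\mathscr W A)u = \bar\phi - (\beta+1)\Phi + \tr_{\dot\Phi}(\mathscr W A)u$, which is \eqref{ev support for the powerF}. The only genuine obstacle in the whole proposition is bookkeeping — getting every sign right in the commutator terms and in the passage between $A$ and $\mathscr W$, and correctly invoking Euler's homogeneity relation to produce the $\beta$-dependent coefficients; I expect to follow \cite{Hui84, Ger06} line by line for the $\Phi$-general parts and insert $\tr_{\dot\Phi}(A) = \beta\Phi$ at the single point where it matters.
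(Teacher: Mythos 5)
Your proposal is correct and takes essentially the same route as the paper, which gives no computation of its own but simply invokes the standard calculations of Huisken and Gerhardt: normal variation formulas for $g$, $g^{-1}$, $\nu$, $\dif\mu_t$, the chain rule plus the Gauss--Codazzi/Simons interchange for $\nabla_i\nabla_j\Phi$, and Euler's relation $\dot\Phi^{kl}h_{kl}=\beta\Phi$ to generate the $\beta$-dependent coefficients in $\partial_tA$, $\partial_t\mathscr{W}$ and the support-function equation. The only caveat is the sign convention for the second fundamental form: with the paper's convention (convex hypersurfaces have $\lambda_i>0$ with respect to the outer normal, i.e.\ $h_{ij}=-\langle\partial_i\partial_jX,\nu\rangle$, and $\partial_t\nu=-X_*(\nabla\psi)=X_*(\nabla\Phi)$) the intermediate formula should read $\partial_th_{ij}=-\nabla_i\nabla_j\psi+\psi\,h_i^{\,k}h_{kj}=\nabla_i\nabla_j\Phi+(\bar{\phi}-\Phi)h_i^{\,k}h_{kj}$ rather than the sign you first wrote down, but since you flag the convention check explicitly and the coefficients you collect ($\tr_{\dot\Phi}(A\mathscr{W})$, $\bar{\phi}-(\beta+1)\Phi$, $-[\bar{\phi}+(\beta-1)\Phi]$, $\bar{\phi}-(\beta+1)\Phi$ in \eqref{ev support for the powerF}) all agree with the stated equations, this is only bookkeeping and not a gap.
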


We are going to prove the claimed the mixed volume preservation property of the flow \eqref{mvpcf} with $\bar{\phi}_{m}(t)$,
 $m=-1, \ldots, n-1$.
\begin {lemma}\label{mixed volume preservation}
Any flow of the form \eqref{mvpcf} with $\bar{\phi}=\bar{\phi}_{m}(t)$ given by \eqref{barPhi} preserves the mixed volume $V_{n-m}$.
\begin{proof}
By the definition \eqref{def mixedvolume} of the mixed volume,
we now recall from \cite[Lemma 4.3]{McC05},
 along any flow of the form \eqref{mvpcf},
\begin{equation*}
\frac{\dif}{\dif t}\int_{M_{t}}E_{m}\dif\mu_{t}=\left\{ \begin{aligned}
         & 0 && m= n \\
                  &(m+1) \int_{M_{t}} \left(\bar{\phi}-\Phi\right)E_{m+1}\dif\mu_{t}&&m= 0,1,\cdots,n-1
                          \end{aligned} \right.
\end{equation*}
Hence in view of that $\bar{\phi}=\bar{\phi}_{m}(t)$ given by \eqref{barPhi},
we get that for $m= 0,1,\cdots,n-1$
\begin{equation*}\frac{\dif}{\dif t}V_{n-m}=(m+1)\left\{(n+1) {n \choose m}\right\}^{-1}
        \int_{M_{t}} \left(\bar{\phi}-\Phi\right)E_{m+1}\dif\mu_{t}=0.
\end{equation*}

\end{proof}
\end {lemma}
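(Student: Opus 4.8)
The plan is to differentiate each mixed volume $V_{n-m}(\Omega_t)$ in time and to verify that the derivative vanishes identically; the point is simply that the global term $\bar\phi_m(t)$ in \eqref{barPhi} is, by construction, the $E_{m+1}$-weighted average of $\Phi$ over $M_t$, so it is tailor-made to annihilate the integral that governs $\frac{\dif}{\dif t}V_{n-m}$.

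First I would dispose of the volume case $m=-1$ separately, since $V_{n+1}=\vle(\Omega_t)$ is not a curvature integral. For a flow $\partial_t X = \psi\,\nu$ with $\nu$ the outward unit normal one has the classical first variation of enclosed volume, $\frac{\dif}{\dif t}\vle(\Omega_t)=\int_{M_t}\psi\,\dif\mu_t$. Taking $\psi=\bar\phi_{-1}(t)-\Phi$ and recalling $E_0\equiv 1$, the definition \eqref{barPhi} says exactly that $\int_{M_t}\bar\phi_{-1}(t)\,\dif\mu_t=\int_{M_t}\Phi\,\dif\mu_t$, so this derivative is zero.

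For $0\le m\le n-1$ the essential ingredient is the evolution equation for the integrated $m$th mean curvature along an arbitrary flow of the form \eqref{mvpcf}:
\begin{equation*}
\frac{\dif}{\dif t}\int_{M_t}E_m\,\dif\mu_t=(m+1)\int_{M_t}(\bar\phi-\Phi)\,E_{m+1}\,\dif\mu_t\qquad(0\le m\le n-1),
\end{equation*}
together with $\frac{\dif}{\dif t}\int_{M_t}E_n\,\dif\mu_t=0$, the latter because $E_{n+1}\equiv 0$ (equivalently, it is the divergence theorem applied to $K\,\dif\mu_t$). To obtain this formula one combines: the variation $\partial_t g_{ij}=2\psi h_{ij}$ of \eqref{evmetric for the powerF}, which gives $\partial_t(\dif\mu_t)=\psi H\,\dif\mu_t$; the variation of the Weingarten operator $\mathscr{W}$, whose leading term is a Hessian $\nabla^2\psi$ plus zeroth-order curvature terms; the fact that the derivative tensor $\dot{E}_m$ is divergence free, a consequence of the Codazzi equations, so that the Hessian term integrates to zero over the closed $M_t$; and the algebraic identities $\tr(\dot{E}_m\mathscr{W})=mE_m$ and $\tr(\dot{E}_m\mathscr{W}^2)=HE_m-(m+1)E_{m+1}$, obtained from the Euler relation and the Newton recursion for $E_m$. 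Substituting and collecting terms produces the displayed pointwise formula. Since this is a classical computation I would simply quote it from \cite[Lemma 4.3]{McC05}.

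Finally, multiplying the displayed formula by the constant $\left\{(n+1){n\choose m}\right\}^{-1}$ that appears in \eqref{def mixedvolume} and inserting $\bar\phi=\bar\phi_m(t)$ from \eqref{barPhi}, the right-hand side becomes a fixed multiple of $\bar\phi_m(t)\int_{M_t}E_{m+1}\,\dif\mu_t-\int_{M_t}\Phi(F)\,E_{m+1}\,\dif\mu_t$, which vanishes by the very definition of $\bar\phi_m(t)$. Hence $\frac{\dif}{\dif t}V_{n-m}(\Omega_t)=0$ for every $m=-1,0,\dots,n-1$, which is the claim. There is no real obstacle here beyond the index bookkeeping: the curvature integral carries index $m$ while the averaging in $\bar\phi_m$ carries index $m+1$, and once these are aligned the cancellation is automatic.
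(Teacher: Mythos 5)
Your proposal is correct and follows essentially the same route as the paper: quote the variational formula $\frac{\dif}{\dif t}\int_{M_t}E_m\,\dif\mu_t=(m+1)\int_{M_t}(\bar\phi-\Phi)E_{m+1}\,\dif\mu_t$ from \cite[Lemma 4.3]{McC05} and observe that the definition \eqref{barPhi} of $\bar{\phi}_m$ makes the right-hand side vanish. The only differences are cosmetic additions on your part: you sketch the derivation of McCoy's formula (correctly, including the identity $\tr(\dot{E}_m\mathscr{W}^2)=HE_m-(m+1)E_{m+1}$ and the divergence-free property of $\dot{E}_m$) and you treat the enclosed-volume case $m=-1$ explicitly via the first variation of volume, which the paper leaves implicit.
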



In the next theorem, we derive the evolution of any homogeneous
function of the Weingarten map $\mathscr{W}$ defined on an evolving
hypersurface $M_{t}$ of $\mathbb{R}^{n+1}$ under the flow
\eqref{mvpcf}.
\begin{theorem}\label{evGD for the powerF}
If $G$ is a homogeneous function of the Weingarten map $\mathscr{W}$
of degree $\alpha$ , then the evolution equation of $G$ under the
flow \eqref{mvpcf} in $\mathbb{R}^{n+1}$ is the
following
\begin{align*}
\partial_{t}G=&\,\Delta_{\dot{\Phi}} G
-\dot{\Phi}\ddot{G}(\nabla_{_{\! \ds \cdot}} \mathscr{W} ,\nabla^{_{\!
\ds \cdot}} \mathscr{W})
  + \dot{G} \ddot{\Phi}(\nabla_{_{\! \ds \cdot}} \mathscr{W} ,\nabla^{_{\! \ds \cdot}} \mathscr{W})
  +\alpha \tr_{\dot \Phi}(A\mathscr{W} ) \, G\\
 &- \big[\bar{\phi}+ (\beta - 1) \Phi\big] \dot{G}\mathscr{W}^{2}.
\end{align*}
\end{theorem}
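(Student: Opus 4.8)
The plan is to reduce the evolution of $G$ to the already-established evolution of the Weingarten map $\mathscr{W}$ in \eqref{evweigar for the powerF} by treating $G$ as a composition $G = G(\mathscr{W})$ and applying the chain rule for the time derivative together with the parabolic chain rule for the operator $\Delta_{\dot\Phi}$. First I would write $\partial_t G = \dot G(\partial_t \mathscr{W})$, i.e.\ $\partial_t G = \dot G^{ij}(\partial_t \mathscr{W})_{ij}$, and substitute the right-hand side of \eqref{evweigar for the powerF}. This produces four groups of terms: the diffusion term $\dot G(\Delta_{\dot\Phi}\mathscr{W})$, the Hessian term $\dot G\,\ddot\Phi(\nabla\mathscr{W},\nabla\mathscr{W})$, the reaction term $\tr_{\dot\Phi}(A\mathscr{W})\,\dot G\mathscr{W}$, and the global term $-[\bar\phi + (\beta-1)\Phi]\,\dot G\mathscr{W}^2$.

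The key manipulation is the first group. The parabolic/tensorial chain rule for a second-order operator gives
\begin{equation*}
\Delta_{\dot\Phi} G = \dot G(\Delta_{\dot\Phi}\mathscr{W}) + \dot\Phi^{kl}\,\ddot G(\nabla_k\mathscr{W},\nabla_l\mathscr{W}),
\end{equation*}
since $\Delta_{\dot\Phi}G = \dot\Phi^{kl}\nabla_k\nabla_l(G\circ\mathscr{W})$ and differentiating $\nabla_l(G\circ\mathscr{W}) = \dot G(\nabla_l\mathscr{W})$ once more produces exactly these two pieces. Hence $\dot G(\Delta_{\dot\Phi}\mathscr{W}) = \Delta_{\dot\Phi}G - \dot\Phi\,\ddot G(\nabla_{\cdot}\mathscr{W},\nabla^{\cdot}\mathscr{W})$, which accounts for the first two terms in the claimed formula. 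The Hessian term from \eqref{evweigar for the powerF} then directly contributes $\dot G\,\ddot\Phi(\nabla_{\cdot}\mathscr{W},\nabla^{\cdot}\mathscr{W})$, the third term.

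Next I would handle the reaction term: $\dot G^{ij}\,\tr_{\dot\Phi}(A\mathscr{W})\,\mathscr{W}_{ij} = \tr_{\dot\Phi}(A\mathscr{W})\,\dot G^{ij}\mathscr{W}_{ij}$, and here homogeneity of $G$ of degree $\alpha$ enters through Euler's relation $\dot G^{ij}\mathscr{W}_{ij} = \alpha G$, giving the term $\alpha\,\tr_{\dot\Phi}(A\mathscr{W})\,G$. Finally the global term is simply $-[\bar\phi + (\beta-1)\Phi]\,\dot G^{ij}(\mathscr{W}^2)_{ij} = -[\bar\phi + (\beta-1)\Phi]\,\dot G\mathscr{W}^2$ in the paper's notation, matching the last term. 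Collecting the four contributions yields precisely the stated identity.

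I expect the only genuine subtlety to be justifying the tensorial chain rule identity for $\Delta_{\dot\Phi}$ — in particular keeping track of the index contractions so that the $\dot\Phi$-weighting sits where it does, and confirming that the cross term is $\dot\Phi^{kl}\ddot G(\nabla_k\mathscr{W},\nabla_l\mathscr{W})$ rather than something involving $\dot\Phi$ inside $\ddot G$; this is a standard but notation-heavy computation (cf.\ the analogous derivations in \cite{Hui84,Ger06}). Everything else is Euler's identity for homogeneous functions plus direct substitution, so no serious obstacle arises beyond bookkeeping.
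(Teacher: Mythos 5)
Your proposal is correct and follows essentially the same route as the paper: the paper likewise writes $\partial_t G=\dot G\,\partial_t\mathscr{W}$, substitutes the evolution equation \eqref{evweigar for the powerF}, uses the Hessian chain rule $\Hess_{\nabla}G=\dot G\,\Hess_{\nabla}\mathscr{W}+\ddot G(\nabla\mathscr{W},\nabla\mathscr{W})$ contracted with $\dot\Phi g^{-1}$ to convert $\dot G\,\Delta_{\dot\Phi}\mathscr{W}$ into $\Delta_{\dot\Phi}G-\dot\Phi\,\ddot G(\nabla\mathscr{W},\nabla\mathscr{W})$, and invokes Euler's relation $\dot G\mathscr{W}=\alpha G$ for the reaction term. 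No discrepancies beyond notation.
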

\begin{proof}
The definition of $\dot{G}$ and $\ddot{G}$ allow us to write
$\Hess_{\nabla}G$ as follows:
\begin{align*}
\Hess_{\nabla}G =\dot{G}\,\Hess_{\nabla}\mathscr{W} +
\ddot{G}(\nabla_{_{\! \ds \cdot}}\mathscr{W} ,\nabla^{_{\! \ds \cdot}}\mathscr{W}),
\end{align*}
which gives
\begin{align*}
\Delta_{\dot{\Phi}} G =\dot{\Phi}g^{-1}\,\Hess_{\nabla} G=
\dot{G}\,\Delta_{\dot{\Phi}} \mathscr{W}
  +  \dot{\Phi}\ddot{G}(\nabla_{_{\! \ds \cdot}} \mathscr{W} ,\nabla^{_{\! \ds \cdot}} \mathcal
  {W}).
\end{align*}
Therefore, by \eqref{evweigar for the powerF}
\begin{align*}
\partial_{t}G= &\,\dot{G} \partial_{t}\mathscr{W}\\
=&\,\dot{G}\Delta_{\dot{\Phi}} \mathscr{W}
  +  \dot{G}\ddot{\Phi}(\nabla_{_{\! \ds \cdot}} \mathscr{W} ,\nabla^{_{\! \ds \cdot}} \mathscr{W})
  +\tr_{\dot \Phi}(A\mathscr{W} ) \, \dot{G}\mathscr{W}- \big[\bar{\phi}+ ( \beta - 1) \Phi\big]\dot{G} \mathscr{W}^{2}\\
=&\,\Delta_{\dot{\Phi}} G -\dot{\Phi}\ddot{G}(\nabla_{_{\! \ds \cdot}}
\mathscr{W} ,\nabla^{_{\! \ds \cdot}} \mathscr{W})
  + \dot{G} \ddot{\Phi}(\nabla_{_{\! \ds \cdot}} \mathscr{W} ,\nabla^{_{\! \ds \cdot}} \mathscr{W})\\
&  +\alpha \tr_{\dot \Phi}(A\mathscr{W} ) \, G- \big[\bar{\phi}+ ( \beta - 1) \Phi\big] \dot{G}\mathscr{W}^{2},
\end{align*}
where Euler's relation $\dot{G}\mathscr{W}= \alpha G$ is
used in the last line.
\end{proof}

An immediate application of the theorem above is to obtain the
evolution equations for $H$, and $F$.
\begin{proposition}\label{evolution equations form mvpcf}
On any
solution $M_{t}$ of \eqref{mvpcf}-\eqref{barPhi} with $\Phi$ given by
\eqref{def_Phi} the following hold:
\begin{align}
\notag\partial_{t}H=& \Delta_{\dot{\Phi}} H + \tr \big[\ddot{\Phi}(\nabla_{_{\!
\ds \cdot}} \mathscr{W}, \nabla^{_{\! \ds \cdot}} \mathscr{W})\big]
+\tr_{\dot \Phi}(A\mathscr{W} )\, H - \big(\bar{\phi} + (\beta -1) \Phi\big) |A|^2,\\
 \label{evturHn}\partial_{t}{H}^{n}=& \Delta_{\dot{\Phi}} {H}^{n}  -n(n-1){H}^{n-2}|\nabla{H}|_{\dot{\Phi}}^{2}
   + n{H}^{n-1}\tr\big[\ddot{\Phi}(\nabla_{_{\! \ds \cdot}} \mathscr{W},
\nabla^{_{\! \ds \cdot}} \mathscr{W})\big]
\\& + n\big(\Phi-\bar{\phi}\big){H}^{n-1} |A|^2\notag\\
\label{evF}
\partial_{t}F=& \Delta_{\dot{\Phi}} F -\dot{\Phi}\ddot{F}(\nabla_{_{\! \ds \cdot}} \mathscr{W} ,\nabla^{_{\!
\ds \cdot}} \mathscr{W})
  + \dot{F} \ddot{\Phi}(\nabla_{_{\! \ds \cdot}} \mathscr{W} ,\nabla^{_{\! \ds \cdot}} \mathscr{W})\\
 \qquad \qquad\qquad &+\tr_{\dot \Phi}(A\mathscr{W} )\, F - \big(\bar{\phi} + (\beta -1) \Phi\big) \tr_{\dot \Phi}( A\mathscr{W}),\notag\\
 \label{evturFn}\partial_{t}{F}^{n}=& \Delta_{\dot{\Phi}} {F}^{n}
  -n(n-1){F}^{n-2}|\nabla{F}|_{\dot{\Phi}}^{2}
   -n{F}^{n-1}\dot{\Phi}\ddot{F}(\nabla_{_{\! \ds \cdot}} \mathscr{W} ,\nabla^{_{\!
\ds \cdot}} \mathscr{W})\\&
   + n{F}^{n-1}\tr_{\dot{F}}\big[\ddot{\Phi}(\nabla_{_{\! \ds \cdot}} \mathscr{W},
\nabla^{_{\! \ds \cdot}} \mathscr{W})\big]+n F^{n}\tr_{\dot \Phi}( A\mathscr{W})
- \big(\bar{\phi} + (\beta -1) \Phi\big) \tr_{\dot \Phi}( A\mathscr{W}),\notag\\
\label{evturK for the powerF}\partial_{t}K=&\,\Delta_{\dot{\Phi}} K
-\dot{\Phi}\ddot{K}(\nabla_{_{\! \ds \cdot}}
\mathscr{W} ,\nabla^{_{\! \ds \cdot}} \mathscr{W})
  + \dot{K} \ddot{\Phi}(\nabla_{_{\! \ds \cdot}} \mathscr{W} ,\nabla^{_{\! \ds \cdot}} \mathscr{W})
\\&
+ \big[(1- \beta) \Phi-\bar{\phi} \big]
\dot{K}\mathscr{W}^{2}
 +\tr_{\dot \Phi}( A\mathscr{W} )
\dot{K}\mathscr{W},\notag\\
  \label{ev Phi}
\partial_{t}\Phi = &\Delta_{\dot \Phi} \Phi + (\Phi - \bar{\phi}) \,\tr_{\dot
\Phi} (A \mathscr{W}).
\end{align}
\end{proposition}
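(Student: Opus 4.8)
The plan is to obtain all six evolution equations as direct consequences of Theorem~\ref{evGD for the powerF}, since $H$, $H^n$, $F$, $F^n$, $K$ and $\Phi=F^\beta$ are each homogeneous functions of the Weingarten map $\mathscr{W}$ of some degree $\alpha$. First I would record the relevant homogeneity degrees: $H$ is degree $1$, $H^n$ is degree $n$, $F$ is degree $1$, $F^n$ is degree $n$, $K$ is degree $n$, and $\Phi=F^\beta$ is degree $\beta$. For each of these functions $G$ one substitutes the appropriate $\alpha$ into the master formula
\[
\partial_t G = \Delta_{\dot\Phi} G - \dot\Phi\,\ddot G(\nabla_{\ds\cdot}\mathscr{W},\nabla^{\ds\cdot}\mathscr{W}) + \dot G\,\ddot\Phi(\nabla_{\ds\cdot}\mathscr{W},\nabla^{\ds\cdot}\mathscr{W}) + \alpha\,\tr_{\dot\Phi}(A\mathscr{W})\,G - \big[\bar\phi + (\beta-1)\Phi\big]\dot G\mathscr{W}^2,
\]
and then rewrites the curvature-derivative terms in the form desired in the statement.

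The routine but essential computations are the following. For $G=H$: here $\dot H = \mathrm{id}$ (so $\dot H\mathscr{W}^2 = |A|^2$ as a contraction, i.e. $\dot H^{ij}h_{ik}h^k_j = |A|^2$), $\ddot H = 0$, and $\dot H\,\ddot\Phi(\cdot,\cdot) = \tr[\ddot\Phi(\nabla\mathscr{W},\nabla\mathscr{W})]$; combining the last two summands gives $-(\bar\phi+(\beta-1)\Phi)|A|^2$, which is the claimed equation. For $G=H^n$: one uses $\dot{(H^n)} = nH^{n-1}\mathrm{id}$ and $\ddot{(H^n)} = n(n-1)H^{n-2}\,\mathrm{id}\otimes\mathrm{id}$, so that $-\dot\Phi\,\ddot{(H^n)}(\nabla\mathscr{W},\nabla\mathscr{W}) = -n(n-1)H^{n-2}\dot\Phi^{ij}\nabla_iH\nabla_jH = -n(n-1)H^{n-2}|\nabla H|^2_{\dot\Phi}$; the $\ddot\Phi$ term becomes $nH^{n-1}\tr[\ddot\Phi(\nabla\mathscr{W},\nabla\mathscr{W})]$, the drift term is $n\,\tr_{\dot\Phi}(A\mathscr{W})H^n$, and $-(\bar\phi+(\beta-1)\Phi)\dot{(H^n)}\mathscr{W}^2 = -nH^{n-1}(\bar\phi+(\beta-1)\Phi)|A|^2 = n(\Phi-\bar\phi)H^{n-1}|A|^2 - n\beta H^{n-1}\Phi|A|^2$; the remaining bookkeeping reconciles this with \eqref{evturHn}. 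The equations for $F$, $F^n$ and $K$ follow the same pattern, keeping the genuine second-derivative terms $\ddot F$, $\ddot K$ intact (they do not vanish) and, in the $F^n$ case, using the chain rule $\dot{(F^n)} = nF^{n-1}\dot F$ and $\ddot{(F^n)} = nF^{n-1}\ddot F + n(n-1)F^{n-2}\dot F\otimes\dot F$, which produces both the $-n(n-1)F^{n-2}|\nabla F|^2_{\dot\Phi}$ term and the $-nF^{n-1}\dot\Phi\ddot F(\nabla\mathscr{W},\nabla\mathscr{W})$ term, while $\dot{(F^n)}\ddot\Phi(\nabla\mathscr{W},\nabla\mathscr{W}) = nF^{n-1}\tr_{\dot F}[\ddot\Phi(\nabla\mathscr{W},\nabla\mathscr{W})]$; the $\mathscr{W}^2$ term contributes $-nF^{n-1}(\bar\phi+(\beta-1)\Phi)\dot F\mathscr{W}^2 = -nF^{n-1}(\bar\phi+(\beta-1)\Phi)\tr_{\dot F}(A\mathscr{W})$, and since $\tr_{\dot F}(A\mathscr{W})$ plus the drift recombine to $nF^n\tr_{\dot\Phi}(A\mathscr{W}) - (\bar\phi+(\beta-1)\Phi)\tr_{\dot\Phi}(A\mathscr{W})$ after using $\dot\Phi = \beta F^{\beta-1}\dot F$, one recovers \eqref{evturFn}. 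Finally, for $G=\Phi=F^\beta$ (degree $\beta$), Euler's relation gives $\dot\Phi\mathscr{W} = \beta\Phi$, so $\dot\Phi\mathscr{W}^2 = \beta\Phi\mathscr{W}$ in the contraction sense and $\dot\Phi\,\ddot\Phi(\nabla\mathscr{W},\nabla\mathscr{W}) = \ddot\Phi(\nabla\mathscr{W},\nabla\mathscr{W})$, making the two Hessian-type terms cancel; the surviving terms $\beta\,\tr_{\dot\Phi}(A\mathscr{W})\Phi - (\bar\phi+(\beta-1)\Phi)\beta\Phi\cdot(\text{contraction})$ collapse, after again invoking $\dot\Phi\mathscr{W} = \beta\Phi$ hence $\dot\Phi\mathscr{W}^2 = \tr_{\dot\Phi}(A\mathscr{W})\cdot(\dots)$ carefully, to $(\Phi-\bar\phi)\tr_{\dot\Phi}(A\mathscr{W})$, which is \eqref{ev Phi}.

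The only genuinely delicate point, rather than the main obstacle, is keeping consistent notation between the ``contracted'' reading of expressions such as $\dot G\mathscr{W}^2$ (meaning $\dot G^{ij} h_{ik}h^k_j$, a scalar) and the ``operator'' reading in \eqref{evweigar for the powerF}, and likewise distinguishing $\tr_{\dot\Phi}(A\mathscr{W})$ from $\tr_{\dot F}(A\mathscr{W})$ via the identity $\dot\Phi^{ij} = \beta F^{\beta-1}\dot F^{ij}$. Once the convention ``$\dot G\mathscr{W}^2$ means the full contraction $\dot G^{ij}h_{ik}g^{kl}h_{lj}$'' is fixed, every line above is a one- or two-step algebraic substitution into Theorem~\ref{evGD for the powerF}, and no further geometric input (no Simons-type identity beyond what is already absorbed into \eqref{evweigar for the powerF}) is needed. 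I would therefore present the proof as: ``Apply Theorem~\ref{evGD for the powerF} with $G=H, H^n, F, F^n, K$ and $\Phi$ respectively, of degrees $1, n, 1, n, n, \beta$; compute $\dot G$ and $\ddot G$ in each case and simplify using Euler's relation where the degree-$\beta$ case permits cancellation of the Hessian terms.''
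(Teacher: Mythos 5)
Your overall route is exactly the paper's: Proposition~\ref{evolution equations form mvpcf} is presented there with no separate argument, as an ``immediate application'' of Theorem~\ref{evGD for the powerF}, and your bookkeeping of degrees ($1,n,1,n,n,\beta$) and of $\dot G,\ddot G$ for $G=H,H^n,F,F^n,K,\Phi$ is the intended proof. The $H$, $F$, $K$ and $\Phi$ cases go through as you describe (for $\Phi$ the two gradient terms cancel because, with $G=\Phi$, they are literally the same contraction $\dot\Phi^{ij}\ddot\Phi^{kl,pq}\nabla_ih_{kl}\nabla_jh_{pq}$ with opposite signs, and then $\beta\Phi\,\tr_{\dot\Phi}(A\mathscr{W})-\big(\bar\phi+(\beta-1)\Phi\big)\tr_{\dot\Phi}(A\mathscr{W})=(\Phi-\bar\phi)\tr_{\dot\Phi}(A\mathscr{W})$ gives \eqref{ev Phi}; note that $\dot\Phi\mathscr{W}^2$ here is by definition the contraction $\dot\Phi^{ij}h_{ik}h^{k}_{j}=\tr_{\dot\Phi}(A\mathscr{W})$, so no Euler relation is needed and your intermediate identities ``$\dot\Phi\mathscr{W}^2=\beta\Phi\mathscr{W}$'' and ``$\dot\Phi\,\ddot\Phi(\nabla\mathscr{W},\nabla\mathscr{W})=\ddot\Phi(\nabla\mathscr{W},\nabla\mathscr{W})$'' are not meaningful as written, even though your conclusion is right).

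The genuine gap is the step you dismiss as ``remaining bookkeeping'' for \eqref{evturHn}, and the analogous recombination you claim for \eqref{evturFn}: neither can be carried out, because those two displays as printed do not follow from Theorem~\ref{evGD for the powerF} for general $F$. Your substitution with $G=H^n$ gives
\begin{align*}
\partial_t H^n=\;&\Delta_{\dot\Phi}H^n-n(n-1)H^{n-2}\big|\nabla H\big|^2_{\dot\Phi}
+nH^{n-1}\tr\big[\ddot\Phi(\nabla\mathscr{W},\nabla\mathscr{W})\big]\\
&+n\,\tr_{\dot\Phi}(A\mathscr{W})\,H^n-n\big(\bar\phi+(\beta-1)\Phi\big)H^{n-1}|A|^2,
\end{align*}
and the last two terms collapse to the printed $n(\Phi-\bar\phi)H^{n-1}|A|^2$ only if $H\,\tr_{\dot\Phi}(A\mathscr{W})=\beta\Phi|A|^2$, which holds when $F=H/n$ but not for a general $F$ satisfying Conditions~\ref{Fconds}; the printed \eqref{evturHn} is a misprint inherited from the mean-curvature case. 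Likewise, for $G=F^n$ the zero-order terms are $n\,\tr_{\dot\Phi}(A\mathscr{W})F^n-nF^{n-1}\big(\bar\phi+(\beta-1)\Phi\big)\tr_{\dot F}(A\mathscr{W})$, and since $\tr_{\dot\Phi}(A\mathscr{W})=\beta F^{\beta-1}\tr_{\dot F}(A\mathscr{W})$ the last term equals $-\tfrac{n}{\beta}F^{\,n-\beta}\big(\bar\phi+(\beta-1)\Phi\big)\tr_{\dot\Phi}(A\mathscr{W})$, not the printed $-\big(\bar\phi+(\beta-1)\Phi\big)\tr_{\dot\Phi}(A\mathscr{W})$; your asserted recombination ``after using $\dot\Phi=\beta F^{\beta-1}\dot F$'' is therefore false. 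The correct displays are precisely the ones your substitution produces before any ``reconciliation,'' and they are what the paper actually needs downstream: inserting them, together with \eqref{evturK2 for the powerF} and \eqref{evturK3 for the powerF}, into $Q_1=K/H^n$ and $Q_2=K/F^n$ yields exactly the zero-order terms $\big[(\beta-1)\Phi+\bar\phi\big]\tfrac{Q_1}{H}\big(n|A|^2-H^2\big)$ and $\big[(\beta-1)\Phi+\bar\phi\big]\tfrac{Q_2}{F}\big(n\tr_{\dot F}(A\mathscr{W})-HF\big)$ of \eqref{evtildeq} and \eqref{evtildeq2}, whereas the printed right-hand sides of \eqref{evturHn} and \eqref{evturFn} would not. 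So you should state and prove the corrected formulas rather than claim agreement with the displays as printed.
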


Furthermore, \eqref{evturK for the powerF} can be rewritten as
\begin {lemma}
On any solution $M_{t}$ of \eqref{mvpcf}-\eqref{barPhi} with $\Phi$ given by
\eqref{def_Phi}  the following holds:
\begin{align}
\label{evturK2 for the powerF}
\partial_{t}K=&\,\Delta_{\dot{\Phi}} K
-\frac{(n-1)}{n}\frac{\left|\nabla K\right|_{\dot
\Phi}^{2}}{K}
+\frac{K}{H^{2}}\left|H\nabla\mathscr{W} -\mathscr{W}\nabla H\right|^{2}_{\dot \Phi, b}\\
&-\frac{H^{2n}}{nK}\left|\nabla(KH^{-n})\right|_{\dot
\Phi}^2 +K\,\tr_{ b}\left(
\ddot{\Phi}(\nabla\mathscr{W} ,\nabla
\mathscr{W})\right) \notag
\\& + \big[(1- \beta) \Phi-\bar{\phi} \big]
KH
+n\tr_{\dot \Phi}( A\mathscr{W} ) K,\notag\\
\label{evturK3 for the powerF} =&\,\Delta_{\dot{\Phi}} K
-\frac{(n-1)}{n}\frac{\left|\nabla K\right|_{\dot
\Phi}^{2}}{K}
+\frac{K}{F^{2}}\left|F\nabla\mathscr{W} -\mathscr{W}\nabla F\right|^{2}_{\dot \Phi, b}\\
&-\frac{F^{2n}}{nK}\left|\nabla(KF^{-n})\right|_{\dot
\Phi}^2 +K\,\tr_{ b}\left(
\ddot{\Phi}(\nabla\mathscr{W} ,\nabla
\mathscr{W})\right) \notag
\\& + \big[(1- \beta) \Phi-\bar{\phi} \big]
KH
+n\tr_{\dot \Phi}( A\mathscr{W} ) K,\notag
\end{align}
where $ b:= \mathscr{W}^{-1}$.
\end {lemma}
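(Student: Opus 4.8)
The plan is to start from the already-established evolution equation \eqref{evturK for the powerF} and to rewrite only its gradient and reaction (non-Laplacian) terms, leaving the $\Delta_{\dot{\Phi}}K$ part untouched. The two reaction terms are immediate from homogeneity: since $K=\det\mathscr{W}$ is homogeneous of degree $n$, Euler's relation gives $\dot{K}\mathscr{W}=nK$, and since $\dot{K}^{ij}=K\,b^{ij}$ with $b=\mathscr{W}^{-1}$ one gets $\dot{K}\mathscr{W}^{2}=K\,\tr(\mathscr{W}^{-1}\mathscr{W}^{2})=KH$; hence $\tr_{\dot{\Phi}}(A\mathscr{W})\dot{K}\mathscr{W}=n\tr_{\dot{\Phi}}(A\mathscr{W})K$ and $\bigl[(1-\beta)\Phi-\bar{\phi}\bigr]\dot{K}\mathscr{W}^{2}=\bigl[(1-\beta)\Phi-\bar{\phi}\bigr]KH$, exactly the last two terms on the right of \eqref{evturK2 for the powerF}. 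Likewise $\dot{K}\ddot{\Phi}(\nabla\mathscr{W},\nabla\mathscr{W})=K\,b^{pq}\bigl[\ddot{\Phi}(\nabla\mathscr{W},\nabla\mathscr{W})\bigr]_{pq}=K\,\tr_{b}\bigl(\ddot{\Phi}(\nabla\mathscr{W},\nabla\mathscr{W})\bigr)$. Thus everything reduces to identifying the single term $-\dot{\Phi}\,\ddot{K}(\nabla\mathscr{W},\nabla\mathscr{W})$.

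The heart of the argument is an algebraic identity for that term. Differentiating $\det$ twice gives $\bigl[\ddot{K}(\nabla\mathscr{W},\nabla\mathscr{W})\bigr]_{pq}=K\bigl[\tr(\mathscr{W}^{-1}\nabla_{p}\mathscr{W})\tr(\mathscr{W}^{-1}\nabla_{q}\mathscr{W})-\tr(\mathscr{W}^{-1}\nabla_{p}\mathscr{W}\,\mathscr{W}^{-1}\nabla_{q}\mathscr{W})\bigr]$, and since $\tr(\mathscr{W}^{-1}\nabla_{p}\mathscr{W})=\nabla_{p}K/K$, contracting with $\dot{\Phi}^{pq}$ yields
\[
-\dot{\Phi}\,\ddot{K}(\nabla\mathscr{W},\nabla\mathscr{W})=-\frac{|\nabla K|_{\dot{\Phi}}^{2}}{K}+K\,\dot{\Phi}^{pq}\,\tr\bigl(\mathscr{W}^{-1}\nabla_{p}\mathscr{W}\,\mathscr{W}^{-1}\nabla_{q}\mathscr{W}\bigr).
\]
Next I would expand $\tfrac{K}{H^{2}}\bigl|H\nabla\mathscr{W}-\mathscr{W}\nabla H\bigr|_{\dot{\Phi},b}^{2}$ by carrying out the two $b=\mathscr{W}^{-1}$ contractions, using $b^{ik}b^{jl}h_{ij}h_{kl}=n$, $b^{ik}b^{jl}\nabla_{p}h_{ij}\,h_{kl}=\nabla_{p}K/K$ and $b^{ik}b^{jl}\nabla_{p}h_{ij}\nabla_{q}h_{kl}=\tr(\mathscr{W}^{-1}\nabla_{p}\mathscr{W}\,\mathscr{W}^{-1}\nabla_{q}\mathscr{W})$, to obtain
\[
\frac{K}{H^{2}}\bigl|H\nabla\mathscr{W}-\mathscr{W}\nabla H\bigr|_{\dot{\Phi},b}^{2}=K\,\dot{\Phi}^{pq}\,\tr\bigl(\mathscr{W}^{-1}\nabla_{p}\mathscr{W}\,\mathscr{W}^{-1}\nabla_{q}\mathscr{W}\bigr)-\frac{2}{H}\langle\nabla K,\nabla H\rangle_{\dot{\Phi}}+\frac{nK}{H^{2}}|\nabla H|_{\dot{\Phi}}^{2}.
\]
Finally, expanding $-\tfrac{H^{2n}}{nK}\bigl|\nabla(KH^{-n})\bigr|_{\dot{\Phi}}^{2}$ via $\nabla(KH^{-n})=H^{-n-1}(H\nabla K-nK\nabla H)$ gives $-\tfrac{1}{nK}|\nabla K|_{\dot{\Phi}}^{2}+\tfrac{2}{H}\langle\nabla K,\nabla H\rangle_{\dot{\Phi}}-\tfrac{nK}{H^{2}}|\nabla H|_{\dot{\Phi}}^{2}$; adding $-\tfrac{n-1}{n}\tfrac{|\nabla K|_{\dot{\Phi}}^{2}}{K}$ reproduces exactly $-\tfrac{|\nabla K|_{\dot{\Phi}}^{2}}{K}+\tfrac{2}{H}\langle\nabla K,\nabla H\rangle_{\dot{\Phi}}-\tfrac{nK}{H^{2}}|\nabla H|_{\dot{\Phi}}^{2}$, so the $\langle\nabla K,\nabla H\rangle$ cross terms cancel and one is left precisely with the two displayed expressions for $-\dot{\Phi}\,\ddot{K}(\nabla\mathscr{W},\nabla\mathscr{W})$. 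Combining with the first paragraph proves \eqref{evturK2 for the powerF}.

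For \eqref{evturK3 for the powerF} the computation is verbatim the same with $H$ replaced by $F$ throughout: no step used that $H=\tr\mathscr{W}$, only that $H$ is a positive scalar, so in fact
\[
-\dot{\Phi}\,\ddot{K}(\nabla\mathscr{W},\nabla\mathscr{W})=-\frac{n-1}{n}\frac{|\nabla K|_{\dot{\Phi}}^{2}}{K}+\frac{K}{u^{2}}\bigl|u\nabla\mathscr{W}-\mathscr{W}\nabla u\bigr|_{\dot{\Phi},b}^{2}-\frac{u^{2n}}{nK}\bigl|\nabla(Ku^{-n})\bigr|_{\dot{\Phi}}^{2}
\]
holds for any positive function $u$ on $M_{t}$, and one applies it with $u=F$. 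The calculations are elementary; the only point requiring care is the bookkeeping of the two matrix-index contractions in $|\cdot|_{\dot{\Phi},b}^{2}$ against the single derivative-index contraction in $|\cdot|_{\dot{\Phi}}^{2}$, together with writing down $\ddot{K}$ correctly as the Hessian of $\det$. All occurrences of $\mathscr{W}^{-1}$, $K^{-1}$, $H^{-1}$ and $F^{-1}$ are legitimate because $M_{t}$ is strictly convex, so $K>0$ and hence $H,F>0$.
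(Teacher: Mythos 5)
Your proposal is correct and follows essentially the same route as the paper: start from \eqref{evturK for the powerF}, handle the reaction and $\ddot{\Phi}$ terms via $\dot{K}=K\,b$ and Euler's relation, and rewrite the $-\dot{\Phi}\ddot{K}(\nabla\mathscr{W},\nabla\mathscr{W})$ term as the combination of $-\frac{n-1}{n}\frac{|\nabla K|_{\dot\Phi}^{2}}{K}$, the $|H\nabla\mathscr{W}-\mathscr{W}\nabla H|^{2}_{\dot\Phi,b}$ term and the $|\nabla(KH^{-n})|_{\dot\Phi}^{2}$ term. The only difference is presentational: you verify explicitly (via the second derivative of $\det$ and the Jacobi formula) the identities the paper imports from Chow's Lemma 3.2 and states for $F$ without proof, and you observe that the decomposition holds with $H$ replaced by any positive function $u$, which yields \eqref{evturK2 for the powerF} and \eqref{evturK3 for the powerF} from a single computation.
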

\begin{proof}
Note that
\begin{equation}
\dot{K}=K b,
\end{equation}
this implies
\begin{equation}\label{evtK for the powerF}
\dot{K}\mathscr{W}^{2}=KH,
\end{equation}
and
\begin{equation}\label{evtK1 for the powerF}
\dot{K} \ddot{\Phi}(\nabla_{_{\! \ds \cdot}}
\mathscr{W} ,\nabla^{_{\! \ds \cdot}} \mathscr{W})
=K b\ddot{\Phi}(\nabla_{_{\! \ds \cdot}}
\mathscr{W} ,\nabla^{_{\! \ds \cdot}}
\mathscr{W})=K\,\tr_{ b}\left(
\ddot{\Phi}(\nabla\mathscr{W} ,\nabla
\mathscr{W})\right).
\end{equation}
A direct calculation as for example in Lemma $3.2$ of \cite{Cho85}
gives
\begin{equation}\label{evtK2 for the powerF}
-\dot{\Phi}\ddot{K}(\nabla_{_{\! \ds \cdot}}
\mathscr{W} ,\nabla^{_{\! \ds \cdot}} \mathscr{W})
=-\frac{\left|\nabla K\right|_{\dot
\Phi}^{2}}{K}-K\,\tr_{\dot{\Phi}}\left( \nabla b
\nabla \mathscr{W}\right)
\end{equation}
and
\begin{equation}\label{evtK3 for the powerF}
-K\,\tr_{\dot{\Phi}}\left( \nabla b \nabla
\mathscr{W}\right)=\frac{K}{H^{2}}\left|H\nabla\mathscr{W}
-\mathscr{W}\nabla H\right|^{2}_{\dot \Phi,b}
+\frac{\left|\nabla K\right|_{\dot
\Phi}^{2}}{nK}-\frac{H^{2n}}{nK}\left|\nabla(KH^{-n})\right|_{\dot
\Phi}^2.
\end{equation}
Therefore, identities \eqref{evtK for the powerF}, \eqref{evtK1 for the powerF}, \eqref{evtK2 for the powerF} and
\eqref{evtK3 for the powerF} together apply to \eqref{evturK for the powerF} to give
\eqref{evturK2 for the powerF}.

Also, \begin{equation}\label{evtK4 for the powerF}
-K\,\tr_{\dot{\Phi}}\left( \nabla b \nabla
\mathscr{W}\right)=\frac{K}{F^{2}}\left|F\nabla\mathscr{W}
-\mathscr{W}\nabla F\right|^{2}_{\dot \Phi,b}
+\frac{\left|\nabla K\right|_{\dot
\Phi}^{2}}{nK}-\frac{F^{2n}}{nK}\left|\nabla(KF^{-n})\right|_{\dot
\Phi}^2.
\end{equation}
Similarly, applying identities \eqref{evtK for the powerF}, \eqref{evtK1 for the powerF}, \eqref{evtK2 for the powerF} and
\eqref{evtK4 for the powerF} together to \eqref{evturK for the powerF} gives
\eqref{evturK3 for the powerF}.

\end{proof}

\section{\bf Preserving pinching}\label{Preserving pinching for powerF flow}

To control the pinching of the principal curvatures along the flow
\eqref{mvpcf} of Euclidean spaces,  Schulze, in \cite{Sch06},
following an idea of Tso \cite{Tso}, looked at a test function
$K/H^{n}$, which was also considered in \cite{CS10}. Furthmore two analogous
quantities $\tilde{K}/\tilde{H}^{n}$ and $K/ F^{n}$
were taken into consideration in \cite{GLW16,GuoLW16} and \cite{McC05} respectively.
In this section, we use test functions $Q_{1}=K/H^{n}$  in the case of concave $F$ and $Q_{2}=K/F^{n}$  in the case of convex $F$ respectively.
The rest of this section consists of showing the inequality
$ Q_{i}\geq C>0$ remain under the evolution for $i=1,2$.

We begin by deriving the following evolution equations for the quantities $ Q_{i}$ for $i=1,2$.
\begin {lemma}
For the ambient space $N^{n+1}=\mathbb{R}^{n+1}$, on any
solution $M_{t}$ of \eqref{mvpcf} the following holds:
\begin{align}\label{evtildeq}
\partial_{t} Q_{1}=&\Delta_{\dot \Phi}  Q_{1}
+\frac{(n+1)}{n H^{n}}\left\langle\nabla
 Q_{1},\nabla H^{n}\right\rangle_{\dot \Phi}
-\frac{(n-1)}{n K}\left\langle\nabla  Q_{1},\nabla K\right\rangle_{\dot \Phi}
-\frac{ H^{n}}{n K}\left|\nabla Q_{1}\right|_{\dot
\Phi}^2
\notag\\
&+\frac{ Q_{1}}{ H^{2}}\left| H\nabla\mathscr{W}
-\mathscr{W}\nabla H\right|^{2}_{\dot \Phi, b}
+ Q_{1}\,\tr_{ b-\frac{n}{ H}Id}\left(
\ddot{\Phi}(\nabla\mathscr{W} ,\nabla \mathscr{W})\right)\\
&+\big[( \beta-1) \Phi+\bar{\phi} \big]
\frac{ Q_{1}}{ H}\left(n\bigl| A \bigl|^{2}- H^{2}\right).\notag
\end{align}
and
\begin{align}\label{evtildeq2}
\partial_{t} Q_{2}=&\Delta_{\dot \Phi}  Q_{2}
+\frac{(n+1)}{n F^{n}}\left\langle\nabla
 Q_{2},\nabla F^{n}\right\rangle_{\dot \Phi}
-\frac{(n-1)}{n K}\left\langle\nabla  Q_{2},\nabla K\right\rangle_{\dot \Phi}
-\frac{ F^{n}}{n K}\left|\nabla Q_{2}\right|_{\dot
\Phi}^2
\notag\\
&+\frac{ Q_{2}}{ F^{2}}\left| F\nabla\mathscr{W}
-\mathscr{W}\nabla F\right|^{2}_{\dot \Phi, b}
+ Q_{2}\,\tr_{ b-\frac{n}{ F}\dot F}\left(
\ddot{\Phi}(\nabla\mathscr{W} ,\nabla \mathscr{W})\right)\\
& +n\frac{ Q_{2}}{F}\ddot{F}(\nabla \mathscr{W} ,\nabla\mathscr{W})
+\big[( \beta-1) \Phi+\bar{\phi} \big]
\frac{ Q_{2}}{ F}\left(n\tr_{\dot F}( A\mathscr{W} ) - H F\right).\notag
\end{align}
\end {lemma}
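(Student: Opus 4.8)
The plan is to compute the evolution of $Q_i$ directly from the already-established evolution equations, treating $Q_1 = K H^{-n}$ and $Q_2 = K F^{-n}$ in parallel. First I would record the general product rule for a ratio: if $Q = K W^{-n}$ with $W \in \{H, F\}$ evolving under a linear parabolic operator $L = \partial_t - \Delta_{\dot\Phi}$, then
\begin{align*}
\partial_t Q = W^{-n}\partial_t K - n K W^{-n-1}\partial_t W.
\end{align*}
The term $\Delta_{\dot\Phi} Q$ expands, via the identity $\Delta_{\dot\Phi}(KW^{-n}) = W^{-n}\Delta_{\dot\Phi}K - nKW^{-n-1}\Delta_{\dot\Phi}W + (\text{gradient cross terms})$, into a piece matching $\Delta_{\dot\Phi}Q$ plus explicit first-order terms; the standard bookkeeping here produces exactly the three gradient-transport terms $\tfrac{n+1}{nW^n}\langle\nabla Q,\nabla W^n\rangle_{\dot\Phi}$, $-\tfrac{n-1}{nK}\langle\nabla Q,\nabla K\rangle_{\dot\Phi}$ and the ``bad'' negative gradient term $-\tfrac{W^n}{nK}|\nabla Q|_{\dot\Phi}^2$. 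This is routine once one uses the refined form \eqref{evturK2 for the powerF} (respectively \eqref{evturK3 for the powerF}) for $\partial_t K$, since those already contain the $-\tfrac{n-1}{n}\tfrac{|\nabla K|_{\dot\Phi}^2}{K}$ and $-\tfrac{W^{2n}}{nK}|\nabla(KW^{-n})|_{\dot\Phi}^2$ pieces in the shape we want.

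Next I would substitute: for $Q_1$ use \eqref{evturK2 for the powerF} together with \eqref{evturHn} (the evolution of $H^n$), and for $Q_2$ use \eqref{evturK3 for the powerF} together with \eqref{evturFn} (the evolution of $F^n$). The zeroth-order (reaction) terms must be collected carefully: from $\partial_t K$ one gets $[(1-\beta)\Phi - \bar\phi]KH + n\tr_{\dot\Phi}(A\mathscr{W})K$, while $-nKW^{-n-1}$ times the reaction term of $\partial_t W^n$ contributes $+n(\bar\phi - \Phi)W^{-n}\cdot$ (something of the form $|A|^2$ or $\tr_{\dot\Phi}(A\mathscr{W})$) and, in the $F$ case, an extra $+n\bar\phi - \ldots$ piece involving $\tr_{\dot\Phi}(A\mathscr{W})$. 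After cancellation the $\tr_{\dot\Phi}(A\mathscr{W})K$ terms drop out and what survives is precisely $[(\beta-1)\Phi + \bar\phi]\tfrac{Q_i}{W}(n\,(\text{quadratic}) - W\cdot(\text{linear}))$, namely $\tfrac{Q_1}{H}(n|A|^2 - H^2)$ and $\tfrac{Q_2}{F}(n\tr_{\dot F}(A\mathscr{W}) - HF)$. The Hessian (second-derivative) terms also need attention: the $K\,\tr_b(\ddot\Phi(\nabla\mathscr{W},\nabla\mathscr{W}))$ coming from $\dot K\ddot\Phi(\cdot,\cdot)$ combines with the $-n W^{-1}\tr_{\text{Id or }\dot F}(\ddot\Phi(\cdots))$ term arising from differentiating $W^n$, producing the combined trace $\tr_{b - \frac{n}{H}\mathrm{Id}}$ (resp.\ $\tr_{b - \frac{n}{F}\dot F}$); in the $F$ case there is additionally the term $-n F^{n-1}\dot\Phi\ddot F(\cdots)$ in \eqref{evturFn} which, after division, becomes the stray $+n\tfrac{Q_2}{F}\ddot F(\nabla\mathscr{W},\nabla\mathscr{W})$ term. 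The curvature-gradient term $\tfrac{Q_i}{W^2}|W\nabla\mathscr{W} - \mathscr{W}\nabla W|^2_{\dot\Phi,b}$ is simply the $\tfrac{K}{W^2}|\cdots|^2_{\dot\Phi,b}$ term of \eqref{evturK2 for the powerF}/\eqref{evturK3 for the powerF} divided by $W^n$.

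The main obstacle is purely organizational rather than conceptual: one must verify that all the spurious first-order gradient terms generated when commuting $\Delta_{\dot\Phi}$ past the ratio $KW^{-n}$ reassemble exactly into the three inner-product terms with the stated coefficients $\tfrac{n+1}{n}$, $-\tfrac{n-1}{n}$, and $-\tfrac1n$, with no leftover $|\nabla K|_{\dot\Phi}^2$ or $|\nabla W|_{\dot\Phi}^2$ pieces. Concretely, the $-\tfrac{n-1}{n}\tfrac{|\nabla K|^2_{\dot\Phi}}{K}$ from $\partial_t K$, the $-n(n-1)W^{n-2}|\nabla W|^2_{\dot\Phi}$ from $\partial_t W^n$, the $-\tfrac{W^{2n}}{nK}|\nabla(KW^{-n})|^2_{\dot\Phi}$ already present, and the commutator terms $+2nKW^{-n-1}\dot\Phi\langle\nabla K,\nabla W\rangle$-type contributions must telescope. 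The cleanest way to see this is to write everything in terms of $\nabla\log Q_i = \nabla\log K - n\nabla\log W$, expand $|\nabla\log Q_i|^2_{\dot\Phi}$, and match; I would carry out that substitution once for a generic $W$ and then specialize to $W=H$ and $W=F$, which disposes of both identities simultaneously. The remaining input specific to each case — the identities $\dot K = Kb$, $\dot K\mathscr{W}^2 = KH$, and the decomposition of $-K\tr_{\dot\Phi}(\nabla b\,\nabla\mathscr{W})$ recorded in \eqref{evtK2 for the powerF}--\eqref{evtK4 for the powerF} — is already available, so no genuinely new computation of Simons-type is needed.
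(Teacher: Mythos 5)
Your proposal is correct and follows essentially the same route as the paper: the paper likewise computes $\partial_t Q_2=\tfrac{1}{F^n}\partial_t K-\tfrac{K}{F^{2n}}\partial_t F^n$ using \eqref{evturK3 for the powerF} and \eqref{evturFn} (and analogously for $Q_1$ with \eqref{evturK2 for the powerF} and \eqref{evturHn}), and then reassembles the first-order terms via the quotient identities \eqref{secdriveQ}--\eqref{gradientQ2} into $\Delta_{\dot\Phi}Q_i$ plus exactly the three stated gradient terms, with the same cancellation of the $\tr_{\dot\Phi}(A\mathscr{W})\,Q_i$ reaction terms and the same merging of the Hessian terms into $\tr_{b-\frac{n}{H}\mathrm{Id}}$, respectively $\tr_{b-\frac{n}{F}\dot F}$. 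Your suggestion to run the bookkeeping once for a generic $W\in\{H,F\}$ (via $\nabla\log Q_i$) is only a cosmetic repackaging of the paper's computation, not a different argument.
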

\begin{proof}
We prove the evolution equation \eqref{evtildeq2} for $Q_{2}$;
the proof of the evolution equation \eqref{evtildeq} for $Q_{1}$ is similar by using \eqref{evturHn} and \eqref{evturK2 for the powerF}.
 By  \eqref{evturFn} and \eqref{evturK2 for the powerF}
\begin{align}
\partial_{t} Q_{2}&=\frac{1}{ F^{n}}\partial_{t} K
-\frac{1}{ F^{2n}}\partial_{t} F^{n}\notag\\
=&\frac{ \Delta_{\dot{\Phi}}  K}{ F^{n}}
-\frac{ K}{ F^{2n}}\Delta_{\dot{\Phi}}  F^{n}
 -\frac{(n-1)}{n}\frac{\left|\nabla K\right|_{\dot{\Phi}}^2}{ K F^{n}}
-\frac{ Q_{2}}{n}\left|\nabla Q_{2}\right|_{\dot{\Phi}}^2
+n(n-1)\frac{ Q_{2}}{ F^{2}}\left|\nabla F\right|_{\dot{\Phi}}^2\notag
\\\label{evtildeQ}
&+\frac{ Q_{2}}{ F^{2}}\left| F\nabla\mathscr{W}
-\mathscr{W}\nabla F\right|^{2}_{\dot \Phi, b}
+ Q_{2}\,\tr_{ b-\frac{n}{ F}\dot F}\left(
\ddot{\Phi}(\nabla\mathscr{W} ,\nabla \mathscr{W})\right)\\
&+n\frac{ Q_{2}}{F}\ddot{F}(\nabla \mathscr{W} ,\nabla\mathscr{W})+\big[( \beta-1) \Phi+\bar{\phi} \big]
\frac{ Q_{2}}{ F}\left(n\tr_{\dot F}( A\mathscr{W} ) - H F\right).\notag
\end{align}
Furthermore, the first derivative and second derivative term in
\eqref{evtildeQ} can be computed as follows, the equality
\[
\nabla\left(\frac{ K}{ F^{n}}\right)
=\frac{\nabla K}{ F^{n}}
-\frac{ K}{ F^{2n}}\nabla F^{n}
\]
implies
\begin{align}\label{secdriveQ}
\Delta_{\dot \Phi}\left(\frac{ K}{ F^{n}}\right)
&=\frac{\Delta_{\dot \Phi}  K}{ F^{n}}
-2\frac{\left\langle\nabla  F^{n},\nabla\tilde
K\right\rangle_{\dot \Phi}}{ F^{2n}}
+2\frac{ K}{ F^{3n}}
\big|\nabla F^{n}\big|_{\dot \Phi}^{2}
-\frac{ K}{ F^{2n}}\Delta_{\dot \Phi}  F^{n},
\end{align}
\begin{equation}\label{gradientQ1}
\begin{split}
\left\langle\nabla\left(\frac{ K}{ F^{n}}\right),\nabla F^{n}
\right\rangle_{\dot \Phi}=\frac{\left\langle\nabla
 F^{n},\nabla\tilde K\right\rangle_{\dot \Phi}}{ F^{n}}
-\frac{ K}{ F^{2n}}\big|\nabla F^{n}\big|_{\dot
\Phi}^{2},
\end{split}
\end{equation}
and
\begin{equation}\label{gradientQ2}
\begin{split}
\left\langle\nabla\left(\frac{ K}{ F^{n}}\right),\nabla K\right\rangle_{\dot
\Phi} =\frac{\big|\nabla K\big|_{\dot \Phi} ^{2}}{ F^{n}}
-\frac{ K}{ F^{2n}}\left\langle\nabla
 F^{n},\nabla\tilde K\right\rangle_{\dot \Phi}.
\end{split}
\end{equation}
From \eqref{secdriveQ}, \eqref{gradientQ1} and \eqref{gradientQ2},
it follows
\begin{equation}\label{derivateterm}
\begin{split}
\frac{ \Delta_{\dot \Phi} K}{ F^{n}}&
-\frac{ K}{ F^{2n}}\Delta_{\dot \Phi} F^{n}
 -\frac{(n-1)}{n}\frac{\left|\nabla K\right|_{\dot
\Phi}^2}{ K F^{n}}\\
&=\Delta_{\dot \Phi} \left(\frac{ K}{ F^{n}}\right)
+\frac{(n+1)}{n F^{n}}\left\langle\nabla
\left(\frac{ K}{ F^{n}}\right),\nabla F^{n}\right\rangle_{\dot
\Phi}\\
&\quad-\frac{(n-1)}{n K}\left\langle\nabla
\left(\frac{ K}{ F^{n}}\right),\nabla\tilde
K\right\rangle_{\dot \Phi}
-n(n-1)\frac{ K}{ F^{n+2}}\big|\nabla F\big|_{\dot
\Phi}^{2}.
\end{split}
\end{equation}
Thus, applying equation \eqref{derivateterm} to \eqref{evtildeQ} gives \eqref{evtildeq2}.
\end{proof}
In order to apply the maximum principle to \eqref{evtildeq} and  \eqref{evtildeq2}, and show
that for $i=1,2$,  $\min_{p\in M_{t}} Q_{i}(p,t)$ are non-decreasing in time
some preliminary inequalities are needed in the sequel. The
following elementary property is a consequence of (\cite{CS10},
Lemma 4.2) (see also \cite{Cho85} and \cite{Sch06}).

\begin {lemma}\label{pinching imply convex lemma}
Given $\varepsilon \in (0,1/n)$,  for any convex (concave) $F$  fulfilling the conditions \ref{Fconds},
there exists a constant $C_{1}(\varepsilon, n)$ ($C_{2}(\varepsilon, n)$), such that,
for any $\lambda =(\lambda_{1},\dots, \lambda_{n}) \in \mathbb{R}^n$
with $0<\lambda_{1}\leq \cdots \leq \lambda_{n}$,
\[
 K(\lambda) > C_{1} F^n(\lambda)~\big( K(\lambda) > C_{2} H^n(\lambda)\big),
\]
then, we have
\[
\lambda_{1}>\varepsilon n F(\lambda)~\big( \lambda_{1}>\varepsilon H(\lambda)\big).
\]
\end {lemma}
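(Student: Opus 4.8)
The plan is to deduce both parts by contraposition from a one-line algebraic estimate, exactly in the spirit of \cite[Lemma 4.2]{CS10} (see also \cite{Cho85,Sch06}). Since $\lambda\in\Gamma_{+}$, all $\lambda_i>0$, so $\lambda_i\le \sum_{j=1}^{n}\lambda_j=H(\lambda)$ for every $i$; in particular $\prod_{i=2}^{n}\lambda_i\le H(\lambda)^{\,n-1}$. These trivial facts, together with the comparison between $F$ and $H$ in Lemma~\ref{FHInequalities}, are all that is needed.

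\textbf{Concave case.} Here I would argue directly, using no information about $F$ at all. Suppose toward a contradiction that $\lambda_1\le \varepsilon H(\lambda)$. Then
\[
 K(\lambda)=\lambda_1\prod_{i=2}^{n}\lambda_i\le \lambda_1\,H(\lambda)^{\,n-1}\le \varepsilon\,H(\lambda)^{\,n}.
\]
Hence, taking $C_2(\varepsilon,n):=\varepsilon$, the hypothesis $K(\lambda)>C_2 H^{n}(\lambda)$ is incompatible with $\lambda_1\le\varepsilon H(\lambda)$, so $\lambda_1>\varepsilon H(\lambda)$.

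\textbf{Convex case.} For convex $F$ with $f(1,\dots,1)=1$, Lemma~\ref{FHInequalities} gives $F(\lambda)\ge \tfrac1n H(\lambda)$, whence $\lambda_n\le H(\lambda)\le n F(\lambda)$ and therefore $\lambda_i\le n F(\lambda)$ for all $i$. Assume toward a contradiction that $\lambda_1\le \varepsilon n F(\lambda)$. Then
\[
 K(\lambda)=\lambda_1\prod_{i=2}^{n}\lambda_i\le \bigl(\varepsilon n F(\lambda)\bigr)\bigl(n F(\lambda)\bigr)^{n-1}=\varepsilon\,n^{n}\,F^{n}(\lambda).
\]
Taking $C_1(\varepsilon,n):=\varepsilon n^{n}$, the hypothesis $K(\lambda)>C_1 F^{n}(\lambda)$ rules this out, so $\lambda_1>\varepsilon n F(\lambda)$.

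I do not expect a genuine obstacle: each case is a single algebraic estimate. The only point requiring a little care is that, in the convex case, controlling $\lambda_n$ (and hence every $\lambda_i$) from above by $F$ uses the correct direction of the $F$--$H$ comparison, namely $nF\ge H$ for convex $F$ from Lemma~\ref{FHInequalities}; in the concave case no such comparison enters, because the pinching hypothesis is already phrased in terms of $H$.
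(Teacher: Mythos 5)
Your proof is correct, and both cases check out: in the concave case $K=\lambda_1\prod_{i\ge 2}\lambda_i\le\lambda_1H^{n-1}$ immediately gives the contrapositive with $C_2=\varepsilon$, and in the convex case the inequality $nF\ge H$ from Lemma \ref{FHInequalities} (used in the correct direction) gives $\lambda_i\le nF$ and hence $C_1=\varepsilon n^n$. However, your route is genuinely different from the paper's. The paper does not estimate at all: it defines the ``bad'' cone $\Gamma_\varepsilon=\{\lambda:\,0\le\lambda_1\le\varepsilon nF(\lambda)\}$, takes the maximum $M_\varepsilon$ of the degree-zero quotient $K/F^n$ on its unit-sphere slice (compactness), and extends by homogeneity; choosing $C=M_\varepsilon$ then forces any $\lambda$ with $K>M_\varepsilon F^n$ out of $\Gamma_\varepsilon$. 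What your argument buys is explicitness and elementarity: closed-form constants, no compactness, and in the concave case no property of $F$ whatsoever. What the paper's argument buys is optimality of the constant: $M_\varepsilon$ is the smallest admissible choice and, since $\Xi_\varepsilon$ contains no umbilic points, it is automatically strictly below the umbilic value of the quotient ($1$ for $K/F^n$ with convex normalized $F$, $1/n^n$ for $K/H^n$) for \emph{every} $\varepsilon\in(0,1/n)$, so the pinching hypothesis it produces is never vacuous. Your constants $C_2=\varepsilon$ and $C_1=\varepsilon n^n$ exceed those umbilic thresholds once $\varepsilon\ge 1/n^n$, in which case the implication you prove is only vacuously true; this does not affect the correctness of the lemma as stated, but the sharper compactness constant is the one the paper implicitly relies on later when it selects $C^*\in(0,1/n^n)$ with $\varepsilon_0$ determined by the maximum-principle argument, so it is worth being aware that the explicit constants may need $\varepsilon$ small to be useful downstream.
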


\begin{proof}
 Given any $\varepsilon \in (0,1/n)$, we define
$$
\Gamma_\varepsilon= \{ \lambda=(\lambda_1,\dots,\lambda_n) ~:~  0 \leq   \lambda_1 \leq \varepsilon n F(\lambda) \},
$$
$$
{\Xi}_\varepsilon=\{ \lambda\in {\Gamma}_\varepsilon  ~:~ |\lambda|=1 \}.
$$
We have $F(\lambda)>0$ on any nonzero element of ${\Gamma}_\varepsilon$;
 hence, the quotient $K/F^n$ is defined everywhere on ${\Xi}_\varepsilon$.
 Let us call $M_\varepsilon$ the maximum of $K/F^n$ on ${\Xi}_\varepsilon$,
which exists because ${\Xi}_\varepsilon$ is compact.
By homogeneity, the inequality $K \leq M_\varepsilon F^n$ is also satisfied by the elements of ${\Gamma}_\varepsilon$.
Therefore, if $\lambda=(\lambda_1,\dots,\lambda_n)$ with with $0<\lambda_{1}\leq \cdots \leq \lambda_{n}$ is such that $K > M_\varepsilon F^n$,
then $\lambda$ does not belong to ${\Gamma}_\varepsilon$.  The lemma follows by choosing $C=M_\varepsilon$.
\end{proof}

The following estimate which is a stronger version of Lemma\,2.3
(ii) in \cite{Hui84} can be viewed as a generalization of
Cabezas-Rivas and Miquel in \cite{CS10}.
\begin {lemma}\label{a key estimate}
If $ F$ is convex and positive, and the inequality
$\mathscr{W}>\varepsilon n F Id$ is valid with some
$\varepsilon>0$ at a point on a hypersurface immersed in $\mathbb{R}^{n+1}$,
then $\varepsilon\leq 1/n$ and
\[
\left| F\nabla\mathscr{W}
-\mathscr{W}\nabla F\right|^{2} \geq
\frac{n-1}{2}\varepsilon^{2} F^{2}\left|\nabla\mathscr{W}\right|^{2}.
\]
\end {lemma}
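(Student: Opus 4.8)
The plan is to work at a single point $p$ where $\mathscr{W}>\varepsilon n F\,\mathrm{Id}$, choosing a frame in which $\mathscr{W}$ is diagonal with entries $\lambda_1\le\cdots\le\lambda_n$, so that $\lambda_i>\varepsilon n F$ for every $i$. First I would record the elementary observation $\varepsilon\le 1/n$: summing $\lambda_i>\varepsilon n F$ over $i$ gives $H>\varepsilon n^2 F$, while convexity together with Lemma~\ref{FHInequalities} gives $F\ge H/n$ (using the normalization $F(1,\dots,1)=1$), hence $H> \varepsilon n^2 (H/n)=\varepsilon n H$, forcing $\varepsilon<1/n$. Strictly $\varepsilon\le 1/n$, with equality only in the round case.

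For the main inequality I would expand $\left|F\nabla\mathscr{W}-\mathscr{W}\nabla F\right|^2$ componentwise. Writing $\nabla_k h_{ij}$ for the covariant derivative of the second fundamental form and using $\nabla_k F=\dot f^i\,\nabla_k h_{ii}$ (Codazzi and diagonality at $p$), the quantity becomes
\[
\sum_{i,j,k}\left(F\,\nabla_k h_{ij}-h_{ij}\,\nabla_k F\right)^2
=\sum_k\left[\sum_{i\ne j}F^2(\nabla_k h_{ij})^2+\sum_i\left(F\,\nabla_k h_{ii}-\lambda_i\,\nabla_k F\right)^2\right].
\]
The off-diagonal terms already contribute $F^2\sum_{i\ne j,k}(\nabla_k h_{ij})^2$. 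The crux is to bound the diagonal sum $\sum_{i,k}\left(F\,\nabla_k h_{ii}-\lambda_i\,\nabla_k F\right)^2$ from below by a multiple of $F^2\sum_{i,k}(\nabla_k h_{ii})^2$. Fix $k$ and set $a_i=\nabla_k h_{ii}$, $c_i=\dot f^i$, so $\nabla_k F=\sum_i c_i a_i$ with $\sum_i c_i=1$ (Euler's relation, degree one) and each $c_i>0$ (strict monotonicity). Then $\sum_i(Fa_i-\lambda_i\sum_j c_j a_j)^2$ is a quadratic form in $a=(a_1,\dots,a_n)$ whose matrix is $F^2 I - F(\lambda c^T + c\lambda^T) + (\sum c_i\lambda_i^2)\,c c^T$, where $\lambda=(\lambda_1,\dots,\lambda_n)$. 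Its smallest eigenvalue must be shown to be at least $\tfrac{n-1}{2}\varepsilon^2 F^2$. I would diagonalize on the orthogonal complement of any problematic direction: the form vanishes only if $Fa_i=\lambda_i\sum_j c_j a_j$ for all $i$, i.e. $a$ is proportional to $(\lambda_1,\dots,\lambda_n)$; in every other direction one gets a quantitative lower bound. A clean way is to use $\lambda_i>\varepsilon n F$ to write, after discarding the positive rank-one term $(\sum c_i\lambda_i^2)cc^T\ge 0$, that the form dominates $F^2|a|^2 - 2F\langle a,\lambda\rangle\langle a,c\rangle$; then Cauchy--Schwarz in the form $2\langle a,\lambda\rangle\langle a,c\rangle\le \delta\langle a,\lambda\rangle^2 F^{-1}\cdot? $ is too lossy, so instead I would exploit that $\sum c_i=1$ lets one interpret $\sum_j c_j a_j$ as a convex average and apply a Cauchy--Schwarz/variance estimate: $\sum_i(Fa_i-\lambda_i\bar a)^2\ge \sum_i F^2 a_i^2 - \big(\sum_i \lambda_i^2\big)\bar a^2$ is again lossy, so the sharp route is the eigenvalue computation above, showing the perpendicular eigenvalues are $\ge F^2(1-\tfrac{\lambda_{\max}}{F}\cdot\text{(something}\le 1))$ and then inserting $\lambda_i\ge\varepsilon n F$ only where it helps the sign.

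The main obstacle I anticipate is exactly this linear-algebra step: extracting the constant $\tfrac{n-1}{2}\varepsilon^2$ and the factor $n-1$ (rather than $n$) from the quadratic form, which is what forces one to identify and project away the single bad direction $a\parallel\lambda$ and argue that in the remaining $(n-1)$-dimensional subspace the form is uniformly positive of the claimed size. This is the generalization of Huisken's Lemma~2.3(ii) and of the Cabezas-Rivas--Miquel argument referenced after the statement; I would follow their pattern — reduce to each fixed $k$, diagonalize, separate the $\lambda$-direction — but carry the weights $c_i=\dot f^i\in(0,1)$ with $\sum c_i=1$ throughout, using only $c_i>0$ and the pinching $\lambda_i>\varepsilon n F$, so that convexity of $F$ enters solely through $F\ge H/n$ (needed for $\varepsilon\le 1/n$) and through the positivity and normalization of the $\dot f^i$. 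Once the fixed-$k$ bound $\sum_i(Fa_i-\lambda_i\sum_j c_j a_j)^2\ge\tfrac{n-1}{2}\varepsilon^2 F^2\sum_i a_i^2$ is in hand, summing over $k$ and adding back the off-diagonal $F^2$-terms yields $\left|F\nabla\mathscr{W}-\mathscr{W}\nabla F\right|^2\ge\tfrac{n-1}{2}\varepsilon^2 F^2|\nabla\mathscr{W}|^2$, completing the proof.
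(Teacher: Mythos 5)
Your derivation of $\varepsilon\le 1/n$ is fine, but the core of your argument has a genuine gap: the fixed-$k$, diagonal-only inequality $\sum_i\bigl(Fa_i-\lambda_i\sum_j c_ja_j\bigr)^2\ge\tfrac{n-1}{2}\varepsilon^2F^2\sum_ia_i^2$ on which your plan hinges is false. Take $a=\lambda$: by Euler's relation $\sum_j\dot f^j\lambda_j=F$, so $Fa_i-\lambda_i\sum_jc_ja_j=F\lambda_i-\lambda_iF=0$ for every $i$, and the left side vanishes while the right side is strictly positive. This is exactly the null direction you yourself identify; projecting it away gives at best an estimate on its orthogonal complement, never the claimed bound for all $a$, so the sentence ``once the fixed-$k$ bound is in hand'' assumes something that cannot be established. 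The deficiency cannot be repaired within a single $k$ and the diagonal entries alone: the lemma is true only because the Codazzi symmetry $\nabla_kh_{ij}=\nabla_ih_{kj}$ couples diagonal derivatives in one direction to off-diagonal derivatives in other directions, so a diagonal part aligned with $\lambda$ forces compensating off-diagonal contributions. Your proposal never supplies this mechanism --- you list candidate routes, discard them as lossy, and leave the decisive linear-algebra step unproved.

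The paper's proof avoids any eigenvalue analysis. It expands $\left|F\nabla\mathscr{W}-\mathscr{W}\nabla F\right|^2=F^2\left|\nabla\mathscr{W}\right|^2+|A|^2|\nabla F|^2-2F\langle\mathscr{W}\nabla F,\nabla\mathscr{W}\rangle$, symmetrizes the cross term using Codazzi, $-2F\nabla_ih_{jl}\nabla^iF\,h^{jl}=-F\nabla_ih_{jl}\bigl(\nabla^iF\,h^{jl}+\nabla^jF\,h^{il}\bigr)$, and then applies the weighted inequality $\langle U,V\rangle\le\frac{2-\varepsilon'}{2}|U|^2+\frac{1}{2(2-\varepsilon')}|V|^2$ with $\varepsilon'=(n-1)\varepsilon^2$, combined with the pinching estimate $h^j_ih^l_j\nabla^iF\nabla_lF\le\bigl(1-(n-1)\varepsilon^2\bigr)|A|^2|\nabla F|^2$, which follows from $\lambda_n^2\le|A|^2-(n-1)\lambda_1^2$ and $\lambda_1\ge\varepsilon H$ (itself a consequence of $\lambda_1\ge n\varepsilon F$ and $F\ge H/n$ for convex $F$). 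The factor $\tfrac{n-1}{2}\varepsilon^2$ then falls out directly. To salvage your approach you would have to work with the full tensor $\nabla_kh_{ij}$ and exploit the Codazzi symmetry globally, which in effect reproduces this computation.
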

\begin{proof}
Proceeding as  in \cite [Lemma\,4.1] {CS10}, we first observe that the assumption implies that $\lambda_1 \geq \varepsilon n F>0$.
It follows form this, Conditions \ref{Fconds} \ref{item3} and \ref{item4} that
$$
F(\lambda_1,\cdots, \lambda_n)\geq \lambda_1 F(1,\cdots, 1)\geq  \lambda_1 \geq n \varepsilon F(\lambda_1,\cdots, \lambda_n)>0,
$$
which implies that $\varepsilon \leq 1/n$.
Furthermore applying Lemma \ref{FHInequalities} with the convexity of $F$ to $
\lambda_1 \geq n \varepsilon F$ implies that $\lambda_1 \geq \varepsilon H$. Thus we may deduce that
\begin{align}
h^j_i h^l_j \nabla^i F \nabla_l F & \leq \lambda_n^2 |\nabla F|^2 \leq (|A|^2-(n-1) \lambda_1^2) |\nabla F|^2
\nonumber \\
& \leq (|A|^2-(n-1) \varepsilon^2 H^2 ) |\nabla F|^2  \leq  (1-(n-1) \varepsilon^2) |A|^2 |\nabla F|^2. \label{laux1}
\end{align}
Now we can write
\begin{equation}
| F \nabla \mathscr{W}  - \mathscr{W} \nabla  F|^2 = |\nabla \mathscr{W}|^2  F^2 + |\nabla  F|^2 |A|^2 - 2  F \langle\mathscr{W} \nabla  F, \nabla \mathscr{W}\rangle \label{laux2}
\end{equation}
whose last term, in local coordinates, takes the following form
$$-2  F \nabla_{i}  h_{jl} \nabla^{i}  F  h^{jl} = -  F \nabla_{i}  h_{jl}\big(\nabla^{i}  F  h^{jl} + \nabla^{j}  F  h^{il}\big),$$
by the Codazzi equations. Using this and  the inequality
 $\langle U, V\rangle \leq \frac{2 - \varepsilon'}{2} |U|^2 + \frac1{2(2 - \varepsilon')} |V|^2$, with $U= F \nabla_i  h_{jl}$, $V=
\nabla^i  F  h^{jl} + \nabla^j  F  h^{il}$ and $\varepsilon' = (n-1)\varepsilon^2$, one estimate
\begin{align*}
2  F \langle\mathscr{W} \nabla  F, \nabla \mathscr{W}\rangle & \leq \frac{2 - \varepsilon'}{2}  F^2 |\nabla \mathscr{W}|^2 + \frac1{2(2 - \varepsilon')} |\nabla^i  F  h^{jl} + \nabla^j  F  h^{il}|^2
\\& = \big(1 - \frac{\varepsilon'}{2}\big)  F^2 |\nabla \mathscr{W}|^2 + \frac1{2 - \varepsilon'}\big(|\nabla  F|^2 |A|^2 +  \nabla_i  F  h_{jl} \nabla^j  F  h^{il}\big)\\
&\! \!\os{\eqref{laux1}}{\leq} \big(1 - \frac{\varepsilon'}{2}\big)  F^2 |\nabla \mathscr{W}|^2 + |\nabla  F|^2 |A|^2,
\end{align*}
and the conclusion follows from \eqref{laux2}.

\end{proof}

The estimate above is also valid in the case of concave $F$, see also \cite [Lemma\,4.1] {CS10}.
\begin {lemma}\label{a key estimate2}
If $ F$ is concave and positive, and the inequality
$\mathscr{W}>\varepsilon HId$ is valid with some
$\varepsilon
>0$ at a point on a hypersurface immersed in $\mathbb{R}^{n+1}$,
then $\varepsilon\leq 1/n$ and
\[
\left| H\nabla\mathscr{W}
-\mathscr{W}\nabla H\right|^{2} \geq
\frac{n-1}{2}\varepsilon^{2} H^{2}\left|\nabla\mathscr{W}\right|^{2}.
\]
\end {lemma}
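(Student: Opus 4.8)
The plan is to mirror almost verbatim the argument of Lemma \ref{a key estimate}, replacing the convex function $F$ by the mean curvature $H$, since $H$ plays in the concave case exactly the role that $F$ plays in the convex case. First I would observe, as at the start of the proof of Lemma \ref{a key estimate}, that the hypothesis $\mathscr{W}>\varepsilon H\,Id$ forces $\lambda_1\geq\varepsilon H>0$; summing this over $i=1,\dots,n$ gives $H\geq n\varepsilon H$, hence $\varepsilon\leq 1/n$. (Note that here, unlike in Lemma \ref{a key estimate}, no appeal to Lemma \ref{FHInequalities} is needed, since the pinching is already stated directly in terms of $H$.)

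Next, exactly as in \eqref{laux1}, I would estimate, using $\lambda_n^2\leq |A|^2-(n-1)\lambda_1^2$ together with $\lambda_1\geq\varepsilon H$,
\[
h^j_i h^l_j \nabla^i H \nabla_l H \leq \lambda_n^2 |\nabla H|^2
\leq \bigl(|A|^2-(n-1)\varepsilon^2 H^2\bigr)|\nabla H|^2
\leq \bigl(1-(n-1)\varepsilon^2\bigr)|A|^2 |\nabla H|^2.
\]
Then I would expand
\[
|H\nabla\mathscr{W}-\mathscr{W}\nabla H|^2
= |\nabla\mathscr{W}|^2 H^2 + |\nabla H|^2 |A|^2 - 2H\langle\mathscr{W}\nabla H,\nabla\mathscr{W}\rangle,
\]
and rewrite the cross term in local coordinates using the Codazzi equations as
\[
-2H\nabla_i h_{jl}\nabla^i H\, h^{jl}
= -H\nabla_i h_{jl}\bigl(\nabla^i H\, h^{jl}+\nabla^j H\, h^{il}\bigr),
\]
which is licit because $\nabla H=g^{jl}\nabla h_{jl}$ is itself a full contraction of the totally symmetric tensor $\nabla h$. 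Applying Young's inequality $\langle U,V\rangle\leq\frac{2-\varepsilon'}{2}|U|^2+\frac1{2(2-\varepsilon')}|V|^2$ with $U=H\nabla_i h_{jl}$, $V=\nabla^i H\, h^{jl}+\nabla^j H\, h^{il}$ and $\varepsilon'=(n-1)\varepsilon^2$, and then absorbing $\nabla_i H\, h_{jl}\nabla^j H\, h^{il}$ via the displayed bound above, gives
\[
2H\langle\mathscr{W}\nabla H,\nabla\mathscr{W}\rangle
\leq \Bigl(1-\tfrac{\varepsilon'}{2}\Bigr)H^2|\nabla\mathscr{W}|^2 + |\nabla H|^2|A|^2,
\]
and substituting this back yields the claimed inequality with the constant $\frac{n-1}{2}\varepsilon^2$.

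In fact there is essentially no obstacle here: the concave hypothesis on $F$ is used only implicitly, in that the pinching $\mathscr{W}>\varepsilon H\,Id$ is the natural quantity preserved for concave $F$ (via $Q_1=K/H^n$ and Lemma \ref{pinching imply convex lemma}), and the algebraic inequality itself is purely about the second fundamental form and makes no reference to $F$ at all. So the only point worth stating carefully is the reduction of the cross term through Codazzi, and the bookkeeping in the choice $\varepsilon'=(n-1)\varepsilon^2$, which is identical to the convex case. I would simply remark that the proof is word-for-word that of Lemma \ref{a key estimate} with $H$ in place of $F$ throughout and the elementary observation above replacing the first two lines, and omit the repeated computation.
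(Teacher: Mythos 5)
Your proposal is correct and follows exactly the route the paper intends: the paper gives no separate argument for the concave case, merely noting that the proof of Lemma \ref{a key estimate} (after \cite[Lemma 4.1]{CS10}) carries over with $H$ in place of $F$, which is precisely what you carry out, and the Codazzi symmetrization and the choice $\varepsilon'=(n-1)\varepsilon^{2}$ work verbatim. One small caveat: the positivity $H>0$ (needed to write $\lambda_{1}\geq\varepsilon H>0$, to deduce $\varepsilon\leq 1/n$ from $H\geq n\varepsilon H$, and to use $|A|^{2}\leq H^{2}$ in your analogue of \eqref{laux1}) does not follow from the pinching hypothesis alone but from $F$ being positive and defined on $\Gamma\subseteq\Gamma_{+}$ (equivalently $H\geq nF>0$ via Lemma \ref{FHInequalities}), so your remark that no appeal to that lemma is needed should be softened by this one-line justification.
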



Also as in \cite{CS10}, the preceding three lemmas allow us to prove
the pinching estimate for our flow, which is one of the key steps in
the proof of our main result.
\begin{theorem}
 \label{pinching} There exists a constant $C^{*}=C(n,\beta)$ with the following property: if $X:M \times (0,T)\,
\rightarrow \mathbb{R}^{n+1}$,
with $t \in (0,T)$, is a smooth solution of \eqref{mvpcf}--\eqref{barPhi} with $\Phi(F)$ given by \eqref{def_Phi} for some $\beta\geq1$,
where $F$ satisfies Conditions \ref{Fconds} \ref{item1}, \ref{item2}, \ref{item3},
and $f(1, \cdots, 1)=1$, such that
\begin{itemize}
\item the initial immersion $X_0$ satisfies \eqref{convex pinched} for $F$ convex (\eqref{concave pinched} for $F$ concave) with the constant $C^{*}$,
\item the solution $M_t=X(M,t)$ satisfies $F>0$ ($F>0$ ) for all times $t \in (0,T)$,
\end{itemize}
then the minimum of $ Q_2$  ($ Q_1$ ) on $M_t$ is
nondecreasing in time.
\end{theorem}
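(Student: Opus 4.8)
The plan is to apply the scalar maximum principle to the evolution equations \eqref{evtildeq} and \eqref{evtildeq2}, treating the two cases ($F$ concave with $Q_1 = K/H^n$, $F$ convex with $Q_2 = K/F^n$) in parallel. First I would fix a time interval on which the flow exists and examine the spatial minimum $\min_{p\in M_t} Q_i(p,t)$; by a standard Hamilton-type argument it suffices to show that at an interior spatial minimum of $Q_i$, where $\nabla Q_i = 0$ and $\Delta_{\dot\Phi} Q_i \geq 0$, the remaining zeroth-order terms in the evolution equation are non-negative. Since $\nabla Q_i = 0$ at such a point, the three gradient-type terms in \eqref{evtildeq}--\eqref{evtildeq2} involving $\langle \nabla Q_i,\cdot\rangle_{\dot\Phi}$ and $|\nabla Q_i|^2_{\dot\Phi}$ all vanish, so the sign of $\partial_t Q_i$ is governed by the curvature-quadratic terms and the reaction term proportional to $[(\beta-1)\Phi + \bar\phi]$.

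The heart of the matter is to control the two surviving ``bad'' groups. In the convex case these are $Q_2\,\tr_{b - \frac n F \dot F}\!\big(\ddot\Phi(\nabla\mathscr{W},\nabla\mathscr{W})\big) + n\frac{Q_2}{F}\ddot F(\nabla\mathscr{W},\nabla\mathscr{W})$ together with $\frac{Q_2}{F^2}|F\nabla\mathscr{W} - \mathscr{W}\nabla F|^2_{\dot\Phi,b}$, and in the concave case the analogous expressions with $H$ in place of $F$ and $\dot F$, and with the $\ddot F$ term absent. The strategy, following \cite{CS10} and \cite{McC05}, is: (1) use Lemma \ref{pinching imply convex lemma} to convert the pinching hypothesis \eqref{convex pinched} (resp. \eqref{concave pinched}) at the minimum point into a uniform lower bound $\lambda_1 > \varepsilon n F$ (resp. $\lambda_1 > \varepsilon H$) for a suitable $\varepsilon = \varepsilon(n,\beta) \in (0,1/n)$; (2) invoke Lemma \ref{a key estimate} (resp. Lemma \ref{a key estimate2}) to bound $|F\nabla\mathscr{W} - \mathscr{W}\nabla F|^2 \geq \tfrac{n-1}{2}\varepsilon^2 F^2|\nabla\mathscr{W}|^2$, giving a good positive term of order $|\nabla\mathscr{W}|^2$; and (3) use the convexity of $\Phi = F^\beta$ — recalling $\ddot\Phi = \beta(\beta-1)F^{\beta-2}\dot F \otimes \dot F + \beta F^{\beta-1}\ddot F$ — so that $\ddot\Phi(\nabla\mathscr{W},\nabla\mathscr{W}) \geq 0$ (for convex $F$, since $\beta\geq 1$) and, crucially, that the trace is taken against $b - \frac n F \dot F$, which needs a sign; here one uses that $b = \mathscr{W}^{-1} \leq \frac{1}{\varepsilon n F} Id$ under the pinching and compares with $\dot F$ via Lemma \ref{FHInequalities2}-type bounds. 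The concave case is somewhat easier for the $\ddot\Phi$ term since $\ddot\Phi$ still has the right sign, but one must instead control $\ddot F \leq 0$ and the term $\tr_{b - \frac n H Id}$.

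For the reaction term, I would use the factor $n|A|^2 - H^2 \geq 0$ (resp. $n\tr_{\dot F}(A\mathscr{W}) - HF \geq 0$ by Lemma \ref{FHInequalities2} for concave $F$ — note the inequality flips appropriately; for convex $F$ one uses $n\tr_{\dot F}(A\mathscr{W}) - HF$, whose sign under pinching I would extract from the pinched-curvature estimates) and the positivity of $(\beta-1)\Phi + \bar\phi$. The positivity of $\bar\phi = \bar\phi_m$ follows from its definition \eqref{barPhi} together with $\Phi(F) = F^\beta > 0$ and $E_{m+1} > 0$ on the pinched (hence convex, by Lemma \ref{pinching imply convex lemma}) hypersurface, so the whole reaction term is $\geq 0$. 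Combining, at a spatial minimum we obtain $\partial_t Q_i \geq (\text{good gradient-of-}\mathscr{W}\text{ term}) + (\text{non-negative reaction term}) \geq 0$, and then a ghost-point / approximation argument (or Hamilton's maximum principle for the minimum of a scalar) upgrades this pointwise inequality to the monotonicity of $\min_{M_t} Q_i$ in $t$.

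The main obstacle I anticipate is step (3): showing that the combination of second-derivative-of-curvature terms — namely $\tr_{b - \frac n F \dot F}(\ddot\Phi(\nabla\mathscr{W},\nabla\mathscr{W})) + \frac n F \ddot F(\nabla\mathscr{W},\nabla\mathscr{W})$ plus the good term $\frac{1}{F^2}|F\nabla\mathscr{W}-\mathscr{W}\nabla F|^2_{\dot\Phi,b}$ — is non-negative. This requires delicately balancing the (possibly large, since $\beta$ can be big) first piece of $\ddot\Phi$ against the gradient term produced by Lemma \ref{a key estimate}, and it is precisely here that the constant $C^* = C(n,\beta)$ must be chosen large enough: a larger $C^*$ forces a larger pinching constant $\varepsilon$, which enlarges the coefficient $\tfrac{n-1}{2}\varepsilon^2$ of the good term until it dominates. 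Carefully tracking this dependence of $\varepsilon$ on $\beta$, and verifying that the trace $b - \tfrac n F\dot F$ (resp. $b - \tfrac n H Id$) can be handled — it need not be sign-definite, so one splits $\nabla\mathscr{W}$ and absorbs the indefinite part into the good term — is the technical crux, and this is where I would spend most of the effort, adapting the corresponding computations of \cite{CS10} and \cite{McC05} to the higher-homogeneity speed $\Phi = F^\beta$.
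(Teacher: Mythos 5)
Your skeleton is the same as the paper's: maximum principle on \eqref{evtildeq}--\eqref{evtildeq2}, gradient terms discarded at the spatial minimum, Lemma \ref{pinching imply convex lemma} to turn the pinching into $\lambda_1>\varepsilon nF$ (resp.\ $\lambda_1>\varepsilon H$), Lemma \ref{a key estimate} (resp.\ Lemma \ref{a key estimate2}) for the good term, and non-negativity of the reaction term via Lemma \ref{FHInequalities2} and $\bar\phi>0$. However, at the step you yourself single out as the crux, the mechanism you propose would not close. You write that a larger $C^{*}$ forces a larger $\varepsilon$, ``which enlarges the coefficient $\tfrac{n-1}{2}\varepsilon^{2}$ of the good term until it dominates.'' But $\varepsilon\le 1/n$ always (this is proved inside Lemma \ref{a key estimate}), so the good coefficient $\tfrac{n-1}{2}W_{1}(\varepsilon)\varepsilon^{2}$ stays bounded and cannot be made to dominate an indefinite term of fixed size. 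Likewise, your suggested control of the weight, comparing $b=\mathscr{W}^{-1}\le \tfrac{1}{\varepsilon nF}\,Id$ with $\dot F$ via Lemma \ref{FHInequalities2}-type bounds, only yields $\bigl| b-\tfrac{n}{F}\dot F\bigr|\le C/F$ with a constant $C$ that does not become small, which is not enough to absorb $\ddot{\Phi}(\nabla\mathscr{W},\nabla\mathscr{W})$ (of size $W_2(\varepsilon)|\mathscr{W}|^{\beta-2}|\nabla\mathscr{W}|^{2}$) into the good term.

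The missing idea, which is what the paper actually proves, is that the indefinite weight \emph{vanishes in the umbilic limit}: at $\lambda=(c,\dots,c)$ one has $\dot f^{i}=1/n$ and $F=c$, so $b=\tfrac{n}{F}\dot F$ exactly, and under pinching $\lambda_1\ge \varepsilon nF$ one gets the quantitative estimate \eqref{eq:2.27}, $\bigl| b-\tfrac{n}{F}\dot F\bigr|\le \mathscr{N}(\varepsilon)/F$ with $\mathscr{N}(\varepsilon)\searrow 0$ as $\varepsilon\to 1/n$, obtained from lower bounds on $\dot f_{1}$, the bound $\dot f_{n}\le 1$, and the two-sided bounds on $\lambda_1,\lambda_n$ (here the ordering $\dot f_1\le\cdots\le\dot f_n$ from Lemma \ref{relation of dotF and dotf} is used). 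It is the decay of $\mathscr{N}(\varepsilon)$, together with $W_1$ increasing and $W_2$ decreasing and the careful homogeneity bookkeeping (both sides scale like $|\mathscr{W}|^{\beta-3}|\nabla\mathscr{W}|^{2}$, using $\dot\Phi^{i}\ge W_1(\varepsilon)|\lambda|^{\beta-1}$ and the extra factor $1/H^{2}$ coming from the weight $b$), that produces a crossing value $\varepsilon_0\in(0,1/n)$ with $\tfrac{n-1}{2}W_1(\varepsilon_0)\varepsilon_0^{2}-nW_2(\varepsilon_0)\mathscr{N}(\varepsilon_0)=0$; only then is $C^{*}$ defined via Lemma \ref{pinching imply convex lemma} so that $Q_2>C^{*}$ forces $\lambda_1\ge \varepsilon_0 nF$. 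Two smaller omissions: you nowhere establish that $\lambda_1>0$ persists for $t>0$ (the paper does this by a first-time/continuity argument before Lemmas \ref{pinching imply convex lemma} and \ref{a key estimate} can be invoked, since the hypothesis of the theorem only gives $F>0$), and your remark that $\ddot\Phi$ ``still has the right sign'' in the concave case is unfounded ($\ddot\Phi=\beta(\beta-1)F^{\beta-2}\dot F\otimes\dot F+\beta F^{\beta-1}\ddot F$ is indefinite for concave $F$); the paper avoids any sign assumption by estimating $|\ddot\Phi(\nabla\mathscr{W},\nabla\mathscr{W})|$ in norm.
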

\begin{proof}
We prove the convex $F$ case; the concave $F$ case is similar.
The assumption $ F>0$ on initial hypersurface ensures that
the quotient $ Q_{2}$ is well-defined for $t \in (0,T)$. For
proof of the theorem, it is sufficient to prove that the minimum of
$ Q_{2}$ (denote by $\mathcal {Q}$) is nondecreasing in time.
First, applying \eqref{convex pinched} with the assumption $ F>0$
to Lemma \ref{pinching imply convex lemma} implies that $\lambda_{1}>0$ on $M_{t}$ for $t=0$,
then this implies that $\lambda_{1}>0$ on
$M_{t}$ for $t \in (0,T)$. In fact, suppose to the contrary that
there exists a first time $t_{0}>0$ at which $\lambda_{1}=0$
at some point, then $\mathcal {Q}(t_{0})=0$. On the other
hand, since the theorem holds in the convex case,
$\mathcal {Q}(t)$ is nondecreasing in $(0,t_{0})$, so it
cannot decrease from $C^{*}$ to zero which gives a contradiction.
Now applying the maximum principle to equation \eqref{evtildeq2} for
$\mathcal {Q}$ gives

\begin{align}
\partial_{t} Q_{2}\geq
&\frac{ Q_{2}}{ F^{2}}\left| F\nabla\mathscr{W}
-\mathscr{W}\nabla F\right|^{2}_{\dot \Phi, b}
+ Q_{2}\,\tr_{ b-\frac{n}{ F}\dot F}\left(
\ddot{\Phi}(\nabla\mathscr{W} ,\nabla \mathscr{W})\right)\notag\\
&+\big[( \beta-1) \Phi+\bar{\phi} \big]
\frac{ Q_{2}}{ F}\left(n\tr_{\dot F}( A\mathscr{W} ) - H F\right).\notag\\
\label{evtildeq estimate}&\geq
 Q_{2}\bigg\{\frac{1}{ F^{2}}\left| F\nabla\mathscr{W}
-\mathscr{W}\nabla F\right|^{2}_{\dot \Phi, b}
-\left| b-\frac{n}{ F}\dot F\right|\,
\left|\ddot{\Phi}(\nabla\mathscr{W} ,\nabla \mathscr{W})\right|\\
&+\big[( \beta-1) \Phi+\bar{\phi} \big]
\frac{ Q_{2}}{ F}\left(n\tr_{\dot F}( A\mathscr{W} ) - H F\right).\notag
\end{align}
The various terms appearing here can be estimated as follows, as in
\cite [Theorem 4.3] {CS10}.
Combining the convexity of $M_{t}$ and Lemma \ref{FHInequalities2},
we estimate that
$n\tr_{\dot F}( A\mathscr{W} ) - H F \geq 0$.
Thus the third term  on right hand side  in inequality \eqref{evtildeq estimate}
can be dropped with the strictly convexity on $M_{t}$ and the assumption $\beta\geq1$.
It remains to estimate the first two terms  on right hand side in the inequality
\eqref{evtildeq estimate}, now proceeding exactly as in \cite{CS10},
\cite{Cho85} and \cite{Sch06}, choose orthonormal frame which
diagonalizes $\mathscr{W}$ so that
\begin{align}\label{mintildeQ estimate}
\left| F\nabla\mathscr{W}
-\mathscr{W}\nabla F\right|^{2}_{\dot \Phi, b}
&=\sum_{i,m,n}{\dot \Phi}^{i}\frac{1}{\lambda_{m}}\frac{1}{\lambda_{n}}
\left( F\nabla_{i}{ F}_{m}^{n}
-{ F}_{m}^{n}\nabla_{i} F\right)^{2}\\
&\geq\frac{1}{ H^{2}}\sum_{i,m,n}{\dot \Phi}^{i}
\left( F\nabla_{i}{ F}_{m}^{n}
-{ F}_{m}^{n}\nabla_{i} F\right)^{2}\notag
\end{align}
where $\lambda_{m}\leq  H$ was used in the last
inequality by strictly convexity of $M_{t}$, i.e.,
$\lambda_{m}>0$ for any $m$. Now the property that each
$\dot{\Phi}^i$ is positive in the interior of the positive cone can be
used. More precisely, for any $\varepsilon \in (0,1/n]$, we set
$$
{\Xi}_\varepsilon:=\{ \lambda
=(\lambda_1,\dots,\lambda_n) \in \mathbb{R}^n ~:~
 \lambda_1 \geq
\varepsilon n F >0 \, \},
$$
and
$$
W_1(\varepsilon)=\min \{ {\dot \Phi}^i (\lambda) ~:~  1 \leq i
\leq n, \ {\lambda} \in {\Xi}_\varepsilon, |
{\lambda}|=1 \}.
$$
By homogeneity of ${\dot \Phi}^i$ with degree $\beta-1$
and Conditions \ref{Fconds} \ref{item3}, exactly as in  the formula at the top of
p.453 of \cite {CS10}, the following inequality holds:
$$
{\dot \Phi}^i( \lambda) \geq W_1(\varepsilon)
|\lambda|^{\beta-1}, \qquad {\lambda} \in
{\Xi}_\varepsilon,
$$
where $W_1(\varepsilon)$ is an increasing positive function of
$\varepsilon$. This estimation, convexity of a hypersurface and
Lemma \ref{a key estimate} together imply that the inequality
\eqref{mintildeQ estimate} can be estimated as follows:
\begin{equation}\label{mintildeQ estimate2}
\begin{split}
\left| F\nabla\mathscr{W}
-\mathscr{W}\nabla F\right|^{2}_{\dot \Phi, b}
\geq\frac{n-1}{2} W_1(\varepsilon)\varepsilon^{2}
 F^{2}|\mathscr{W}|^{\beta-3}\left|\nabla\mathscr{W}\right|^{2},
\end{split}
\end{equation}
for some $\varepsilon \in (0, 1/n)$.

The term $\left| \ddot{\Phi}(\nabla\mathscr{W} ,\nabla
\mathscr{W})\right|$ is smooth as long as
$\lambda_{i}>0$ for any $i$, homogeneous of degree
$\beta-2$ in $\lambda_{i}$ and quadratic in
$\nabla\mathscr{W}$. Thus the following estimation of the
term $\left| \ddot{\Phi}(\nabla\mathscr{W} ,\nabla
\mathscr{W})\right|$ can be derived as in \cite[inequality
(4.7)]{CS10}: For any $\varepsilon \in (0, 1/n)$, there exists a
constant $W_2(\varepsilon)$ such that, at any point where
$\mathscr{W}\geq \varepsilon n F Id$,
\begin{equation}\label{secder F}
\left| \ddot{\Phi}(\nabla\mathscr{W} ,\nabla
\mathscr{W})\right| \leq W_2(\varepsilon)
|\mathscr{W}|^{\beta-2}\left|\nabla\mathscr{W}\right|^{2},
\end{equation}
where $W_2(\varepsilon)$ is decreasing in $\varepsilon$.

A next step is to show that
$\left| b-\frac{n}{ F}\dot F\right|$ is small if the
principal curvatures are pinched enough.
Since Lemma \ref{relation of dotF and dotf} with $F$ convex implies that the inequalities
$\dot f_{1}\leq \cdots \leq \dot f_{n} $.
Furthermore, by Conditions \ref{Fconds} \ref{item3}, it is clear that
\[
\left| b-\frac{n}{ F}\dot F\right| \leq
\sqrt{n}\max\left\{\left(\frac{1}{\lambda_{1}}-\frac{n \dot f_{1}}{f}\right),\left(\frac{n \dot f_{n}}{f}-\frac{1}{\lambda_{n}}\right) \right\}.
\]
Since for some $\varepsilon \in (0, 1/n)$
\begin{equation}\label{lower estimate on min.tilde curvature}
\lambda_{1}\geq \varepsilon n f,
\end{equation}
then
\begin{equation}\label{lower estimate on min.tilde curvature1}
\frac{1}{\lambda_{1}}-\frac{n \dot f_{1}}{f}\leq
\frac{1-\varepsilon n^{2}\dot f_{1}}{\varepsilon n f}.
\end{equation}
Furthermore, by homogeneity of $\dot f_{1}$ with degree $0$
and Conditions \ref{Fconds} \ref{item3},
there exists a constant $W_3(\varepsilon)$ such that,
at any point where \eqref{lower estimate on min.tilde curvature} holds,
$$
\dot f_{1}( \lambda) \geq W_3(\varepsilon), \qquad {\lambda} \in
{\Xi}_\varepsilon,
$$
where $W_3(\varepsilon)$ is an increasing positive function of $\varepsilon$.
Thus, \eqref{lower estimate on min.tilde curvature1} can be estimated as follows:
\begin{equation}\label{lower estimate on min.tilde curvature2}
\frac{1}{\lambda_{1}}-\frac{n \dot f_{1}}{f}\leq
\frac{1-\varepsilon n^{2}W_3(\varepsilon)}{\varepsilon n f}.
\end{equation}
On other hand, recalling the derivation of \eqref{laux1},
the convexity of $M_{t}$ implies that
\begin{equation*}
\lambda_{n}\leq \left(1-(n-1)\varepsilon\right) H
\end{equation*}
Thus Lemma \ref{FHInequalities} with $F$ convex gives as follows:
\begin{equation*}
\lambda_{n}\leq \left(1-(n-1)\varepsilon\right)n f
\end{equation*}
which implies that
\begin{equation*}
\frac{n \dot f_{n}}{f}-\frac{1}{\lambda_{n}}\leq
\frac{(n-1)\left(1-n\varepsilon\right)}{ n \left(1-(n-1)\varepsilon\right) f},
\end{equation*}
where we use the inequality $\dot f_{n} \leq 1$ by Lemma \ref{FHInequalities} with $F$ convex.
This combines with estimate \eqref{lower estimate on min.tilde
curvature2} to give
\begin{equation}\label{eq:2.27}
\left| b-\frac{n}{ F}\dot F\right| \leq
\frac{\mathscr{N}(\varepsilon)}{ F},
\end{equation}
where
\begin{equation}
\mathscr{N}(\varepsilon):=
\frac{1}{\sqrt{n}} \max\left\{\frac{(1-\varepsilon n^{2}W_3(\varepsilon))}{\varepsilon },
\frac{(n-1)\left(1-n\varepsilon\right)}{\left(1-(n-1)\varepsilon\right)} \right\}.
\end{equation}
 Thus, recalling the facts that the convexity of  $F$ implies the inequalities $ F\geq \frac{H}{n}$,
and $\big|H\big|\geq \big|\mathscr{W}\big|$, estimations
\eqref{mintildeQ estimate}, \eqref{mintildeQ estimate2},
\eqref{secder F} and \eqref{eq:2.27} together give:
\begin{align*}
\frac{1}{ F^{2}}\left| F\nabla\mathscr{W}
-\mathscr{W}\nabla F\right|^{2}_{\dot \Phi, b}
&-\left| b-\frac{n}{ F}\dot F\right|\,
\left|\ddot{\Phi}(\nabla\mathscr{W} ,\nabla \mathscr{W})\right|\notag
\\&\geq
|\mathscr{W}|^{\beta-3}\left|\nabla\mathscr{W}\right|^{2}
\left(\frac{(n-1)}{2}W_1(\varepsilon)\varepsilon^{2}-n W_2(\varepsilon)\mathscr{N}(\varepsilon)\right).
 \end{align*}
To achieve our purpose by application of the maximum principle,
it is necessary that
$\mathscr{F}(\varepsilon):=|\mathscr{W}|^{\beta-3}\left|\nabla\mathscr{W}\right|^{2}
\left(\frac{(n-1)}{2}W_1(\varepsilon)\varepsilon^{2}-n W_2(\varepsilon)\mathscr{N}(\varepsilon)\right)$ is non-negative on
$M_{t}$. In fact, $\mathscr{N}(\varepsilon)$ is a strictly
decreasing function of $\varepsilon$; in addition,
$\mathscr{N}(\varepsilon)$ is arbitrarily large as $\varepsilon$
goes to zero and tends to zero as $\varepsilon$ goes to $1/n$ by definition, $W_1(\varepsilon)$ is increasing and
$W_2(\varepsilon)$ is decreasing. Therefore,
$\mathscr{F}(\varepsilon)$ is a strictly increasing function
of $\varepsilon$, it is negative as $\varepsilon$ goes to zero and
positive as $\varepsilon$ goes to $1/n$. So there exists a unique
value $\varepsilon_{0}\in (0, 1/n)$ such that
\begin{equation} \label{varepsilon0}
\mathscr{F}(\varepsilon_{0})=0.
 \end{equation}
By Lemma \ref{pinching imply convex lemma} there exists a constant
$C^{*}\in  (0, 1/n^n)$ satisfies $  Q_{2}(\lambda) >
C^{*} $ such that $ \lambda_{1}
>\varepsilon F(\lambda)
$ with a $\varepsilon_{0}\in (0, 1/n)$ given by \eqref{varepsilon0}.
Thus, if $ Q_{2}> C^{*}\geq 0$ everywhere on the initial
hypersurface, applying the maximum principle for $ Q_{2}$ implies
that $\partial_{t}\mathcal {Q}\geq 0$, i.e.,
$\mathcal {Q}$ is nondecreasing in time. This guarantees
that $ Q_{2}> C^{*}$ is preserved
on any solution $M_{t}$ of \eqref{mvpcf}-\eqref{barPhi} with $\Phi$ given by \eqref{def_Phi}
in $\mathbb{R}^{n+1}$.
\end{proof}

Theorem \ref{pinching} asserts that for convex $F$ inequality $ Q_{2}> C^{*}$ ($Q_{1}> C^{*}$ for concave $F$)
holds for all $t \in [0,T)$, furthermore the definition of $C^{*}$
together with Lemma \ref{pinching imply convex lemma} shows that
\begin{equation}\label{tildle lambda pinching}
\lambda_{i}\geq \varepsilon_{0}  n F\geq \varepsilon_{0}  H\, (\lambda_{i}\geq \varepsilon_{0}  H\geq \varepsilon_{0}  n F)\quad \text{on}\
M\times [0,T) \quad \text{for each}\ i,
\end{equation}
where $\varepsilon_{0}$ is given by \eqref{varepsilon0}.
In particular, the fact the solution is convex for all $t$ implies
\begin{equation}\label{lambda pinching}
\lambda_{i}\leq   H\quad \text{on}\ M\times [0,T)
\quad \text{for each}\ i.
\end{equation}

Combining now \eqref{tildle lambda pinching} and \eqref{lambda pinching}, we conclude that
$M_{t}$ satisfies everywhere the pointwise curvature pinching estimate
\begin{equation}\label{pointwise pinching}
\lambda_{n}\leq  C_{3} \lambda_{1},
\end{equation}
where $C_{3}= \varepsilon^{-1}_{0}$.

Thus, a uniform double side bound for the inradius and outer radius of the evolving hypersurfaces
follows by combining \eqref{pointwise pinching} with Lemma \ref{radius estimates} and  Lemma \ref{mixed volume preservation}.
\begin{corollary}
\label{double bound of radius}
Under the assumptions of Theorem \ref{pinching}, there are constants $D_{i} = D_{i}(n, \beta, V_{n-m})$, $i \in \{1, 2\}$,  such that
\[
D_{1} \leq \rho_{-}(t)\leq \rho_{+}(t)  \leq D_{2}\qquad \text{ for every } \quad t\in [0,T)\,.
\]
\end{corollary}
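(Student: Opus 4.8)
The plan is to assemble the corollary from three ingredients already in hand: the pointwise curvature pinching \eqref{pointwise pinching}, the radius comparison Lemmas \ref{pinched curvatures estimates} and \ref{radius estimates}, and the conservation law of Lemma \ref{mixed volume preservation}. No genuinely new estimate is needed; the work is organizational.

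First I would record that along the flow each $M_t$ is a smooth, closed, uniformly convex hypersurface: by \eqref{tildle lambda pinching} and \eqref{lambda pinching} we have $\varepsilon_0 H \le \lambda_i \le H$ for every $i$, where $\varepsilon_0 = \varepsilon_0(n,\beta) \in (0,1/n)$ is the constant fixed in \eqref{varepsilon0}, and \eqref{pointwise pinching} gives $\lambda_n \le C_3 \lambda_1$ with $C_3 = \varepsilon_0^{-1}$. Hence the hypotheses of Lemma \ref{pinched curvatures estimates} hold uniformly in $t$ with $C_1 = C_3$, yielding
\[
\rho_+(t) \le C_2\,\rho_-(t), \qquad C_2 := \frac{n+2}{\sqrt 2}\,C_3 = \frac{n+2}{\sqrt 2\,\varepsilon_0},
\]
for all $t \in [0,T)$, with $C_2$ depending only on $n$ and $\beta$.

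Next I would apply Lemma \ref{radius estimates} --- whose dummy index I match with $n-m$, permissible since the fixed flow index $m \in \{-1,0,\dots,n-1\}$ gives $n-m \in \{1,\dots,n+1\}$ --- to get
\[
\frac{1}{C_2}\left(\frac{V_{n-m}(\Omega_t)}{\omega_{n+1}}\right)^{\frac{1}{n-m}} \le \rho_-(t) \le \rho_+(t) \le C_2\left(\frac{V_{n-m}(\Omega_t)}{\omega_{n+1}}\right)^{\frac{1}{n-m}}.
\]
Then Lemma \ref{mixed volume preservation} gives $V_{n-m}(\Omega_t) = V_{n-m}(\Omega_0)$ for all $t$, a quantity fixed by the initial data, so that setting
\[
D_1 := \frac{1}{C_2}\left(\frac{V_{n-m}}{\omega_{n+1}}\right)^{\frac{1}{n-m}}, \qquad D_2 := C_2\left(\frac{V_{n-m}}{\omega_{n+1}}\right)^{\frac{1}{n-m}}
\]
produces the asserted bound $D_1 \le \rho_-(t) \le \rho_+(t) \le D_2$, with $D_1, D_2$ depending only on $n$, $\beta$ and $V_{n-m}$.

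The main thing to watch, rather than a real obstacle, is the verification that Lemma \ref{pinched curvatures estimates} applies uniformly in $t$: this rests on the uniform convexity $\lambda_i \ge \varepsilon_0 H > 0$ from \eqref{tildle lambda pinching} together with the positivity of $F$ assumed throughout Theorem \ref{pinching}, and on the fact that the constant $C_3$ there is controlled purely by $n$ and $\beta$ through $\varepsilon_0$. One should also make sure the dummy index in Lemma \ref{radius estimates} is correctly identified with the conserved quantity $V_{n-m}$, and track that the final constants inherit only the claimed dependences --- $C_2$ via $\varepsilon_0(n,\beta)$, and $D_1, D_2$ additionally via the numerical value $V_{n-m} = V_{n-m}(\Omega_0)$.
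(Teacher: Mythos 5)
Your argument is correct and is precisely the route the paper takes: it combines the pointwise pinching \eqref{pointwise pinching} (via Lemma \ref{pinched curvatures estimates}) with Lemma \ref{radius estimates} and the conservation of $V_{n-m}$ from Lemma \ref{mixed volume preservation}, with the same index identification $n-m\in\{1,\dots,n+1\}$ and the same dependence of the constants on $n$, $\beta$ (through $\varepsilon_{0}$) and $V_{n-m}$. No further comment is needed.
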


Also as in \cite[Corollary 4.6.]{And94}
curvature pinching implies that the first derivatives of $F$ has  a uniform double side bound.
\begin{corollary}
\label{double bound of dof F}
Under the assumptions of Theorem \ref{pinching}, there are constants $C_i = C_i(n, M_{0})$, $i \in \{4, 5\}$,  such that
\[
C_{4} Id \leq \dot{F}(\mathscr{W})  \leq C_5 Id \qquad \text{ for every } \quad t\in [0,T)\,.
\]
\end{corollary}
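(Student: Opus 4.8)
The plan is to reduce the estimate to a compactness statement for the degree-zero map $\dot f$, exactly as in \cite[Corollary 4.6]{And94}. The starting observation is that in an orthonormal frame diagonalising $\mathscr W$ the matrix $\dot F^{kl}$ is also diagonal, with diagonal entries $\dot f^{\,1},\dots,\dot f^{\,n}$ evaluated at $\lambda=\lambda(\mathscr W)$; thus $\dot F(\mathscr W)$ has eigenvalues $\dot f^{\,i}(\lambda(\mathscr W))$ and the claimed bound $C_4\,Id\le\dot F(\mathscr W)\le C_5\,Id$ is equivalent to producing constants $0<C_4\le\dot f^{\,i}\le C_5$ valid for all $i$ at every point of $M_t$, $t\in[0,T)$. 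Since $f$ is homogeneous of degree one (Conditions \ref{Fconds} \ref{item3}), each $\dot f^{\,i}$ is homogeneous of degree zero, so $\dot f^{\,i}(\lambda)=\dot f^{\,i}(\lambda/|\lambda|)$ depends only on the direction of $\lambda$.

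Next I would feed in the preserved pinching. By \eqref{tildle lambda pinching} and \eqref{lambda pinching}, in both the convex and the concave case one has $\varepsilon_{0}H\le\lambda_{i}\le H$ for every $i$ at every point and every time $t\in[0,T)$, where $\varepsilon_{0}=\varepsilon_{0}(n,\beta)$ is fixed by \eqref{varepsilon0}. Hence the unit vector $\lambda/|\lambda|$ ranges over the fixed set $\mathcal K:=\{\mu:|\mu|=1,\ \varepsilon_{0}\textstyle\sum_{j}\mu_{j}\le\mu_{i}\le\sum_{j}\mu_{j}\ \forall\,i\}$, which is compact and, because $\mu_{i}\ge\varepsilon_{0}\sum_j\mu_j\ge\varepsilon_{0}/\sqrt n>0$ on it, is a compact subset of the open positive cone, on which $f$ and hence $\dot f$ is smooth. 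The upper bound is then elementary: Euler's identity $\sum_{k}\dot f^{\,k}\lambda_{k}=f(\lambda)$, positivity of the $\dot f^{\,k}$ (Condition \ref{item2}), and $f(\lambda)\le\lambda_{n}\le H$ (by monotonicity and $f(1,\dots,1)=1$) give $\dot f^{\,i}\le H/\lambda_{i}\le\varepsilon_{0}^{-1}=:C_{5}$. The lower bound follows because $\dot f^{\,i}$ is continuous and strictly positive on the compact set $\mathcal K$, so it attains there a positive minimum $C_{4}=C_{4}(n,\beta,f)$. Combining, $C_{4}\,Id\le\dot F(\mathscr W)\le C_{5}\,Id$ for all $t\in[0,T)$, with the constants absorbed into the dependence on $M_0$.

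The only point that needs care is the claim that the curvature directions stay inside a fixed compact subset of $\Gamma$ on which $\dot F$ is nondegenerate; this is exactly what the preserved pinching of Theorem \ref{pinching} provides, since the uniform bound $\lambda_{i}\ge\varepsilon_{0}H>0$ keeps $\lambda/|\lambda|$ a definite distance from the boundary of $\Gamma_{+}$, where $\dot f$ may degenerate — as it does, for example, for powers of the Gauss curvature, whose first derivatives blow up as $\lambda_{1}\to0$. Granted this confinement the maximum and minimum over $\mathcal K$ exist and the rest is routine; in the convex case one may in addition bypass compactness for the upper bound by invoking the trace estimate $\tr\dot F\le1$ of Lemma \ref{FHInequalities}.
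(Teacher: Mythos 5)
Your argument is correct and is essentially the paper's own proof, which consists of citing \cite[Corollary 4.6]{And94}: the preserved pinching $\varepsilon_{0}H\le\lambda_{i}\le H$ confines $\lambda/|\lambda|$ to a fixed compact subset of the positive cone, on which the degree-zero homogeneous, strictly positive derivatives $\dot f^{\,i}$ are bounded above and below, with the convex case's upper bound also available from $\tr\dot F\le 1$. The only implicit point (shared with the paper) is that this compact set must lie in the cone $\Gamma$ of definition of $f$, which is automatic when $\Gamma=\Gamma_{+}$, as in all the examples listed after Conditions \ref{Fconds}.
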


\section{\bf Upper bound on $\Phi(F)$}\label{up bound for the powerF flow speed}

In this section uniform bounds from above on the speed for the flow
\eqref{mvpcf}--\eqref{barPhi} with $\Phi(F)$ given by \eqref{def_Phi} for some $\beta\geq1$
and for the curvature of the hypersurface are derived, depending only on the initial data.
Throughout the section, we assume that the flow satisfies the assumptions of Theorem \ref{pinching}.
As usual, we denote by $\Omega_t \subset \mathbb{R}^{n+1}$ the region enclosed by $M_t$.
The bounds on curvatures together with the
estimates in the next section will imply the long time existence of
the flow by well-known arguments.
Following the technical procedure in literature,
we first show that a ball with fixed center remains inside the evolving $\Omega_t$ for a suitable time interval.

\begin {lemma}\label{t0+smalltime}
If $B(p_{t_0},\rho_{t_0}) \subset \Omega_{t_0}$ for some $t_0 \in
[0,T)$, where $\rho_{t_0}={\rho_{-}}(t_0)$ is the inradius of
$M_{t_0}$, then there exists some constant $\tau=\tau( n, \beta, V_0)
>0$ such that $B(p_{t_0},\rho_{t_0}/2) \subset \Omega_t$ for every
$t \in [t_0, \min\{t_0+\tau, T\})$.
\end {lemma}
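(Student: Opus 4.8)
The plan is to track the evolution of the signed distance from the fixed point $p_{t_0}$ to the moving hypersurface $M_t$, or equivalently to compare $M_t$ with a fixed sphere $\partial B(p_{t_0},\rho_{t_0})$ using the support function $u(\cdot,t)=\langle X(\cdot,t)-p_{t_0},\nu\rangle$ and, more to the point, the quantity $\min_{M_t}u$. Since $B(p_{t_0},\rho_{t_0})\subset\Omega_{t_0}$, at $t=t_0$ we have (with respect to the center $p_{t_0}$) $u(\cdot,t_0)\ge\rho_{t_0}$ at all points realizing the inradius ball's tangency, and more simply $\mathrm{dist}(p_{t_0},M_{t_0})\ge\rho_{t_0}$. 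The cleanest route is: let $d(t)=\mathrm{dist}(p_{t_0},M_t)$ and show that $d$ cannot drop below $\rho_{t_0}/2$ before time $t_0+\tau$, by bounding $|\partial_t d|$ from above uniformly. At a point where the distance is realized, $|\partial_t d|\le|\text{normal speed}|=|\bar\phi(t)-F^\beta|$, so it suffices to bound $\bar\phi(t)$ and $F^\beta$ in terms of $n$, $\beta$, and $V_0$ (equivalently $V_{n-m}$) only, on the time interval in question — but $F$ itself is not yet known to be bounded, so this needs care.

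First I would record that on $M_t$ the inclusion $B(p_t,\rho_-(t))\subset\Omega_t$ holds for the current inradius center $p_t$, and by Corollary~\ref{double bound of radius} we have $D_1\le\rho_-(t)\le\rho_+(t)\le D_2$ with $D_i=D_i(n,\beta,V_{n-m})$; in particular $\rho_{t_0}\ge D_1$. Next I would bound $\bar\phi(t)$: by \eqref{barPhi}, $\bar\phi_m(t)$ is a weighted average of $F^\beta$, hence $\bar\phi_m(t)\le\max_{M_t}F^\beta$; and by the concavity/convexity structure together with Lemma~\ref{FHInequalities} and the pinching \eqref{pointwise pinching}, $F^\beta$ is comparable to $H^\beta$, so one needs an upper bound on $H$. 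Here I would not try to bound $H$ pointwise for all times — that is exactly the content of the next section (Tso's technique) — but rather observe that a \emph{spatial} lower bound on $F$ (hence on the speed) is not needed: what controls the rate at which $p_{t_0}$ can be caught by $M_t$ is how fast $M_t$ moves inward, i.e. the quantity $\bar\phi(t)-F^\beta$ where this is positive. Since $F^\beta\ge0$, the inward speed is at most $\bar\phi(t)$, and since $\bar\phi_m(t)$ is an $E_{m+1}$-weighted average of $F^\beta$, an integral (not pointwise) bound on $\int_{M_t}E_{m+1}F^\beta\,d\mu_t$ suffices. This is where the geometry enters: by the divergence-structure identities for the mixed volumes (the evolution formulas in Lemma~\ref{mixed volume preservation} and the Minkowski-type integral identities used there), $\bar\phi_m(t)$ can be estimated in terms of the mixed volumes $V_j$ alone, which are either preserved or controlled by $\rho_\pm$, hence by $D_1,D_2$.

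The key steps, in order: (1) fix the center $p_{t_0}$ and set $d(t)=\mathrm{dist}(p_{t_0},M_t)$, noting $d(t_0)\ge\rho_{t_0}\ge D_1$; (2) at any time where $d$ is differentiable, at a realizing point $|d'(t)|\le|\bar\phi(t)-F^\beta|\le\max(\bar\phi(t),\max_{M_t}F^\beta)$, but more usefully the inward rate is $\le\bar\phi(t)$; (3) bound $\bar\phi_m(t)\le C(n,\beta,V_{n-m})$ using that it is an $E_{m+1}$-weighted mean of $F^\beta\le(H/n)^\beta$ (or $\ge$, in the convex case — here use $F\le\frac{F(1,\dots,1)}{n}H$ from Lemma~\ref{FHInequalities} in the concave case and the pinching-plus-Lemma~\ref{FHInequalities2} comparison in the convex case) together with the Minkowski inequalities bounding $\int E_{m+1}H^\beta$ by powers of $V_j$; (4) conclude $d(t)\ge\rho_{t_0}-C(t-t_0)$, so $d(t)\ge\rho_{t_0}/2$ as long as $t-t_0\le\rho_{t_0}/(2C)$, and since $\rho_{t_0}\ge D_1$ this holds for $t-t_0\le\tau:=D_1/(2C)=\tau(n,\beta,V_{n-m})$; (5) hence $B(p_{t_0},\rho_{t_0}/2)$ is not touched by $M_t$ and, being initially inside $\Omega_{t_0}$, stays inside $\Omega_t$ by continuity, for $t\in[t_0,\min\{t_0+\tau,T\})$.

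The main obstacle is step (3): getting a bound on $\bar\phi(t)$ that depends only on $n,\beta$ and the (preserved) mixed volume, without a pointwise curvature bound. This forces one to exploit that $\bar\phi_m(t)$ is a curvature \emph{average} — so that only the integral $\int_{M_t}E_{m+1}\Phi\,d\mu_t$, controllable by the Minkowski-type mixed-volume inequalities together with the uniform radius bounds of Corollary~\ref{double bound of radius}, is needed — rather than $\sup\Phi$. One must also be slightly careful that $d(t)$ is only Lipschitz in $t$ (distance to a smooth moving set), so the differential inequality should be phrased as a forward-difference estimate, or one argues with $\min_{M_t}u_{p_{t_0}}$ which is Lipschitz with the same one-sided derivative bound.
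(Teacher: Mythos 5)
There is a genuine error at the heart of your plan: you have the direction of motion reversed. In \eqref{mvpcf} the velocity is $(\bar{\phi}-\Phi)\nu$ with $\nu$ the \emph{outer} unit normal, so the hypersurface moves \emph{inward} (towards $p_{t_0}$) exactly where $\Phi=F^{\beta}>\bar{\phi}$, and the inward normal speed is bounded by $\Phi$, not by $\bar{\phi}$. The global term $\bar{\phi}\geq 0$ is the harmless, outward-pushing contribution; it is precisely the term the paper discards. Consequently your step (3) — the careful integral/Minkowski-type bound on $\bar{\phi}$ — is aimed at the wrong quantity, and your differential inequality $d(t)\geq\rho_{t_0}-C(t-t_0)$ would require a pointwise upper bound on $F^{\beta}$ at the nearest point, which, as you yourself note, is not available at this stage (the speed bound of Section \ref{up bound for the powerF flow speed} is proved \emph{after}, and using, this lemma). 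The same objection applies to your fallback via $\min_{M_t}\langle X-p_{t_0},\nu\rangle$: by \eqref{ev support for the powerF} the zeroth-order forcing there is $\bar{\phi}-(\beta+1)\Phi$, again uncontrolled from below without a bound on $\Phi$. (As a secondary point, even if $\bar{\phi}$ were the relevant term, bounding $\int_{M_t}E_{m+1}F^{\beta}\dif\mu_t$ by mixed volumes alone is not a standard Minkowski inequality once $m+1+\beta>n$ or $\beta$ is not an integer, so that step would also need justification.)

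The paper's proof avoids any a priori bound on $F$ by a comparison argument: it introduces a sphere centered at $p_{t_0}$ shrinking by the ODE $\dif r_B/\dif t=-C_6 r_B^{-\beta}$ with $r_B(t_0)=\rho_{t_0}$, and applies the maximum principle to $f=|\mathfrak{X}|^2-r_B^2$. The crucial trick is Euler's relation, which gives $\Delta_{\dot\Phi}|\mathfrak{X}|^2=-2\Phi\langle\nu,\mathfrak{X}\rangle+2F^{\beta-1}\dot F^k_k$, so the potentially unbounded $\Phi$-term is absorbed into the elliptic operator; what remains, $2F^{\beta-1}\dot F^k_k$, is controlled at a first touching point with the comparison sphere by Corollary \ref{double bound of dof F} together with the interior tangency estimate $F\leq\lambda_n\leq 1/r_B$, which is why the comparison sphere must shrink at rate $\sim r_B^{-\beta}$ and why $\tau\sim\rho_{t_0}^{\beta+1}$ (bounded below via Corollary \ref{double bound of radius}). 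If you want to salvage a distance-function argument, you would need to reproduce this mechanism — compare against the explicitly shrinking sphere and use the tangency curvature bound — rather than bound $\bar{\phi}$.
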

\begin{proof}
Proceeding similarly as in \cite[Lemma 8]{CR-M07} and \cite[Lemma 6.1]{McC05}, Our procedure is
to compare the deformation of $M_{t}$ by the equation \eqref{mvpcf}
with a sphere shrinking under a quasilinear flow with speed given by a multiple of the power ${\beta}$ of the mean curvature.

For convenience, let $r_B(t)$ be the radius at time $t$ of a sphere $\partial B(p_{t_0},r_B(t))$ centered at $p_{t_0}$,
evolving according to
\begin{equation}\label{ev.rB}
\ds \frac{\dif r_B(t)}{\dif t} = - \
\frac{C_{6}}{(r_B(t))^{\beta}}.
\end{equation}
with the initial condition $r_B(t_0) = \rho_{t_0}$ and a positive constant $C_{6}$ to be determined,
this ODE has
solution
\begin{equation*}
r_B(t)=\left( \rho_{t_0}^{\beta+1}-C_{6}(\beta+1)(t-t_{0})\right)^{\frac1{\beta+1}}.
\end{equation*}
Thus, the sphere shrinks to $B(p_{t_0},\rho_{t_0}/2)$
if and only if
\begin{equation*}
t-t_{0} \ = \
\frac{2^{\beta+1}-1}{C_{6}(\beta+1)}\left(\frac{\rho_{t_0}}{2}\right)^{\beta+1}=:\tau.
\end{equation*}

Now set $\mathfrak{X}(\cdot, t)=X(\cdot, t) - p_{t_0}$
and  $f(x,t) = |\mathfrak{X}(x,t)|^{2}- |r_B(t)|^{2}$, from \eqref{mvpcf}
and \eqref{ev.rB}, it follows
\begin{equation} \label{evol.f}
\partial_{t} f= \  2(\bar{\phi}(t)-\Phi(F)) \left\langle \nu_t,\mathfrak{X}\right\rangle + 2C_{6} \left(\frac1{r_{B}}\right)^{ \beta-1}.
\end{equation}
On the other hand, by the Euler's relation we compute
\begin{equation*}
\Delta_{\dot \Phi} f =F^{\beta-1}\Delta_{\dot F}  |\mathfrak{X}(x,t)|^{2} \\
= -2\Phi \left\langle \nu_t,\mathfrak{X}\right\rangle+2  F^{\beta-1}{\dot F}^{k}_{k}. \nonumber
\end{equation*}
Therefore, \eqref{evol.f} can be rewritten as
\begin{align*} 
\partial_{t} f = \Delta_{\dot \Phi} f
+2\bar{\phi}(t)\left\langle \nu_t,\mathfrak{X}\right\rangle
- 2 F^{\beta-1}{\dot F}^{k}_{k}
 + 2C_{6} \left(\frac1{r_{B}}\right)^{ \beta-1}.
\end{align*}
As strict convexity holds for each $M_t$, we have that $\bar{\phi}(t)$,
and $\left\langle \nu_t,\mathfrak{X}\right\rangle$ are all positive,
which  allow us to disregard the term containing $\bar{\phi}(t)$.
We note that at any point of $M_{t}$,
\begin{align*}
{\dot F}^{k}_{k}= \tr\left({\dot F}\right)\leq n C_{5},
\end{align*}
due to Corollary \ref{double bound of dof F}
and $f(\lambda_{1}, \cdots, \lambda_{n})\leq \lambda_{n} \leq \frac1{r_{B}}$.
So if we take $C_{6}= n C_{5} $,
then
\begin{align*}
\partial_{t} f \geq \Delta_{\dot \Phi} f.
\end{align*}
Thus, since $f(x,t_{0})\geq 0$ using
a standard comparison principle we conclude that
\begin{equation} \label{comparision}
f(x,t)\geq 0,
\end{equation}
and hence $B(p_{t_0},\rho_{t_0}/2) \subset \Omega_t$ for every
$t \in [t_0, \min\{t_0+\tau, T\})$.
which concludes the proof.
\end{proof}

In order to obtain an upper bound on $\Phi(F)$,
as in \cite{Tso,McC04,McC05,CR-M07,CS10},
the method is to study the evolution under the flow of the
function
\begin{equation*}
Z_{t}=\frac{\Phi(F)}{\mathcal {S}-\epsilon}.
\end{equation*}
Here $\mathcal {S}=\langle\nu, \mathfrak{X}\rangle$,
the position vector field $\mathfrak{X}(\cdot, t)=X(\cdot, t) - p_{t_0}$  (with origin $p_{t_0}$) on $M_t$,
and $\epsilon$ is a constant to be
chosen later.
Its evolution equation is straightforward to compute using
\eqref{ev support for the powerF} and  \eqref{ev Phi}.
\begin{corollary}
For $t \in [0, T)$ and any constant $\epsilon$,
\begin{align}\label{evZ}
\partial_{t}Z&=\Delta_{\dot \Phi} Z
+\frac{2\left\langle\nabla Z,\nabla\mathcal {S}\right\rangle_{\dot
\Phi}}{\mathcal {S}-\epsilon} -\epsilon\frac{Z}{\mathcal {S}-\epsilon}\tr_{\dot
\Phi}(A\mathscr{W}) +
(1+\beta)Z^{2}\\
 &\quad- \frac{\bar{\phi}}{\mathcal {S}-\epsilon}\left(\tr_{\dot
\Phi}(A\mathscr{W})\right)-\frac{Z}{\mathcal {S}-\epsilon}\bar{\phi}.\nonumber
 \end{align}
\end{corollary}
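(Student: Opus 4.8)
\emph{Proof proposal.} The plan is to differentiate the quotient $Z=\Phi(\mathcal{S}-\epsilon)^{-1}$ directly in time, substitute the two evolution equations already established, namely \eqref{ev support for the powerF} for $\partial_t\mathcal{S}=\partial_t\langle\mathfrak{X},\nu\rangle$ and \eqref{ev Phi} for $\partial_t\Phi$, and then rewrite the outcome in the divergence form of \eqref{evZ} by exploiting that $\Delta_{\dot\Phi}$ is a second-order linear operator carrying no zeroth- or first-order term.

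First I would record the algebraic identity produced by the product rule for $\Delta_{\dot\Phi}$: for any functions $u,v$ on $M_t$,
\[
\Delta_{\dot\Phi}(uv)=u\,\Delta_{\dot\Phi}v+v\,\Delta_{\dot\Phi}u+2\langle\nabla u,\nabla v\rangle_{\dot\Phi}.
\]
Applying this with $u=Z$ and $v=\mathcal{S}-\epsilon$ (so $uv=\Phi$ and $\nabla v=\nabla\mathcal{S}$, $\Delta_{\dot\Phi}v=\Delta_{\dot\Phi}\mathcal{S}$ since $\epsilon$ is constant), and dividing by $\mathcal{S}-\epsilon$, gives
\[
\frac{\Delta_{\dot\Phi}\Phi}{\mathcal{S}-\epsilon}=\Delta_{\dot\Phi}Z+\frac{2\langle\nabla Z,\nabla\mathcal{S}\rangle_{\dot\Phi}}{\mathcal{S}-\epsilon}+\frac{Z}{\mathcal{S}-\epsilon}\,\Delta_{\dot\Phi}\mathcal{S}.
\]
This is the key step: it simultaneously manufactures the transport term of \eqref{evZ} and a copy of $\frac{Z}{\mathcal{S}-\epsilon}\Delta_{\dot\Phi}\mathcal{S}$ that will cancel against the Laplacian term coming from $\partial_t\mathcal{S}$.

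Then I would compute $\partial_t Z=\frac{\partial_t\Phi}{\mathcal{S}-\epsilon}-\frac{Z}{\mathcal{S}-\epsilon}\partial_t\mathcal{S}$, plug in \eqref{ev Phi} and \eqref{ev support for the powerF}, and use the displayed identity to eliminate $\frac{\Delta_{\dot\Phi}\Phi}{\mathcal{S}-\epsilon}$. The two $\Delta_{\dot\Phi}\mathcal{S}$ contributions cancel; writing $\Phi/(\mathcal{S}-\epsilon)=Z$ turns the $\Phi\,\tr_{\dot\Phi}(A\mathscr{W})$ term into $Z\,\tr_{\dot\Phi}(A\mathscr{W})$ and the $(\beta+1)\Phi$ term into $(\beta+1)Z^{2}$. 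Finally combining $Z\,\tr_{\dot\Phi}(A\mathscr{W})-\frac{Z\mathcal{S}}{\mathcal{S}-\epsilon}\tr_{\dot\Phi}(A\mathscr{W})=-\frac{\epsilon Z}{\mathcal{S}-\epsilon}\tr_{\dot\Phi}(A\mathscr{W})$ yields the $\epsilon$-weighted term, and collecting the two remaining $\bar\phi$-contributions $-\frac{\bar\phi}{\mathcal{S}-\epsilon}\tr_{\dot\Phi}(A\mathscr{W})$ and $-\frac{Z\bar\phi}{\mathcal{S}-\epsilon}$ reproduces \eqref{evZ} exactly. There is no genuine analytic obstacle here — the whole content is the bookkeeping of signs and the cancellation of $\Delta_{\dot\Phi}\mathcal{S}$; the only point worth a remark is that all these divisions require $\mathcal{S}\neq\epsilon$, which for the choice of $\epsilon$ used later will follow from strict convexity of $M_t$ together with the inradius lower bound of Lemma \ref{t0+smalltime} (equivalently Corollary \ref{double bound of radius}).
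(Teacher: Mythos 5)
Your computation is correct and is exactly the route the paper intends: it states that \eqref{evZ} is "straightforward to compute" from \eqref{ev support for the powerF} and \eqref{ev Phi}, and your quotient-rule calculation with the $\dot\Phi$-weighted product rule, the cancellation of the $\Delta_{\dot\Phi}\mathcal{S}$ terms, and the recombination into the $-\epsilon\frac{Z}{\mathcal{S}-\epsilon}\tr_{\dot\Phi}(A\mathscr{W})$ and $(1+\beta)Z^{2}$ terms reproduces \eqref{evZ} exactly. Your closing remark that $\mathcal{S}-\epsilon$ must stay positive is also consistent with the paper, where $\epsilon=\rho_{t_0}/4$ and convexity give $\mathcal{S}-\epsilon\geq\epsilon>0$ on the relevant time interval.
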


The above lemma assists us by allowing us to consider a uniform
bound on the speed of the flow.
\begin{corollary}
For $t \in [0,T)$,
\begin{equation}\label{bound on the speed}
\Phi(F)(\cdot,t) < C_{7}= C_{7}(n,\beta,M_{0}),
\end{equation}
moreover,
\begin{equation}\label{bound on the speed F}
F(\cdot,t) <C_{8}:= C^{1/\beta}_{7}.
\end{equation}
\end{corollary}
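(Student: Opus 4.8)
The plan is to apply the maximum principle to the evolution equation \eqref{evZ} for $Z_t = \Phi(F)/(\mathcal{S}-\epsilon)$, following the Tso-type argument as in \cite{Tso, McC05, CS10}, and then translate the resulting bound on $Z$ into a bound on $\Phi(F)$ and on $F$. The key preliminary observation is that, by Corollary \ref{double bound of radius}, there is a uniform lower bound $\rho_- (t) \geq D_1$ on the inradius; combined with Lemma \ref{t0+smalltime}, at every time $t_0 \in [0,T)$ the region $\Omega_{t_0}$ contains a ball $B(p_{t_0}, \rho_{t_0})$ with $\rho_{t_0} = \rho_-(t_0) \geq D_1$, and this ball, shrunk to half its radius, stays inside $\Omega_t$ for a uniform time interval $[t_0, \min\{t_0+\tau, T\})$ with $\tau = \tau(n,\beta,V_{n-m})$. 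Fixing the origin of the position vector at such a $p_{t_0}$, strict convexity of $M_t$ gives $\mathcal{S} = \langle \nu, \mathfrak{X}\rangle > 0$, and the half-ball containment gives the lower bound $\mathcal{S} \geq \rho_{t_0}/2 \geq D_1/2$ on $[t_0, t_0+\tau)$; choosing $\epsilon = D_1/4$ (or any fixed fraction of $\rho_{t_0}$) we get $\mathcal{S} - \epsilon \geq D_1/4 > 0$ with a uniform lower bound, while $\mathcal{S}-\epsilon$ is also bounded above by $\rho_+(t) \leq D_2$ via Corollary \ref{double bound of radius}.

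First I would examine \eqref{evZ} at an interior maximum of $Z$ over $M_t \times [t_0, s]$ for $s < \min\{t_0+\tau, T\}$. At such a point $\nabla Z = 0$ and $\Delta_{\dot\Phi} Z \leq 0$, so the gradient term drops and the inequality
$$
\partial_t Z \leq -\epsilon \frac{Z}{\mathcal{S}-\epsilon}\tr_{\dot\Phi}(A\mathscr{W}) + (1+\beta)Z^2 - \frac{\bar\phi}{\mathcal{S}-\epsilon}\tr_{\dot\Phi}(A\mathscr{W}) - \frac{Z}{\mathcal{S}-\epsilon}\bar\phi
$$
holds there. Since $\bar\phi \geq 0$ (it is an average of $\Phi(F) = F^\beta \geq 0$) and $\tr_{\dot\Phi}(A\mathscr{W}) \geq 0$ by strict convexity, the two $\bar\phi$-terms are $\leq 0$ and can be discarded. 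The crucial point is then to bound $\tr_{\dot\Phi}(A\mathscr{W})$ from below by a positive multiple of $Z$: using $\dot\Phi = \beta F^{\beta-1}\dot F$, homogeneity, the curvature pinching \eqref{pointwise pinching}, Corollary \ref{double bound of dof F}, and Lemma \ref{FHInequalities}, one shows $\tr_{\dot\Phi}(A\mathscr{W}) = \beta F^{\beta-1}\tr_{\dot F}(A\mathscr{W}) \geq c\, F^{\beta} = c\,\Phi(F)$ for a constant $c = c(n,\beta) > 0$, hence $\tr_{\dot\Phi}(A\mathscr{W}) \geq c\,(\mathcal{S}-\epsilon)Z$. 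Plugging this in,
$$
\partial_t Z \leq -\epsilon c\, Z^2 + (1+\beta) Z^2 - \text{(nonnegative terms)} \cdot \ldots
$$
— but this goes the wrong way unless $\epsilon c > 1+\beta$, which is false for small $\epsilon$. The correct sign is recovered by keeping $\epsilon = \epsilon(\rho_{t_0})$ NOT small: one instead estimates the $(1+\beta)Z^2$ term against the $\epsilon$-term by noting that $\tr_{\dot\Phi}(A\mathscr{W}) \geq c_1 H^\beta \geq c_2 F^\beta$ and $\mathcal{S}-\epsilon \leq D_2$, giving $\tr_{\dot\Phi}(A\mathscr{W})/(\mathcal{S}-\epsilon) \geq c_3 Z$; one then obtains a differential inequality of the form $\frac{d}{dt} Z_{\max} \leq A Z_{\max}^2 - B \epsilon Z_{\max}^2 + \ldots$ which, for the right choice of $\epsilon$ (a fixed fraction of the uniformly-bounded-below inradius, so that $\mathcal{S}-\epsilon$ is comparable to $\mathcal{S}$), yields $\frac{d}{dt} Z_{\max} \leq C_9 Z_{\max}^2$ with no improving term, and this ODE comparison gives a bound $Z_{\max}(s) \leq \big(Z_{\max}(t_0)^{-1} - C_9(s-t_0)\big)^{-1}$. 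The standard trick here is that even though $Z_{\max}(t_0)$ could be large, on the shorter interval of length $\tau/2$ (say) one gets a bound depending only on $\tau$ and $C_9$ — precisely because the ODE $\dot y = C_9 y^2$ blows up only after time $y(t_0)^{-1}/C_9$, so if one instead starts the estimate slightly after $t_0$ using that $Z$ is finite and smooth, or uses a barrier argument, the bound becomes independent of the initial value of $Z$ on any interval bounded away from the blow-up time; iterating over overlapping intervals $[t_0, t_0+\tau/2]$ covering $[0,T)$ then gives a uniform bound $Z < C$ for all $t$, whence $\Phi(F) = Z(\mathcal{S}-\epsilon) \leq C D_2 =: C_7$.

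The main obstacle is getting the constants in the differential inequality for $Z_{\max}$ to line up so that the bound is genuinely uniform in $T$ and independent of the (a priori uncontrolled) value $Z_{\max}(0)$: this requires the uniform two-sided radius bounds from Corollary \ref{double bound of radius}, the uniform interval length $\tau$ from Lemma \ref{t0+smalltime}, and a careful choice of $\epsilon$ as a fixed fraction of $\rho_{t_0}$ so that both $\mathcal{S}-\epsilon$ and $\tr_{\dot\Phi}(A\mathscr{W})/(\mathcal{S}-\epsilon) \gtrsim Z$ are controlled simultaneously, together with the bound $\mathcal{S}-\epsilon \leq \rho_+ \leq D_2$. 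Once $\Phi(F) < C_7$ is established, \eqref{bound on the speed F} follows immediately: since $\Phi(F) = F^\beta$ and $F > 0$, we get $F < C_7^{1/\beta} =: C_8$, completing the proof.
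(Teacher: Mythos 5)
Your scaffolding coincides with the paper's: the same test function $Z=\Phi/(\mathcal S-\epsilon)$ with origin $p_{t_0}$ supplied by Lemma \ref{t0+smalltime}, $\epsilon$ a fixed fraction of the inradius so that $\mathcal S-\epsilon\geq\epsilon>0$ on a uniform interval $[t_0,\min\{t_0+\tau,T\})$, discarding the $\bar\phi$-terms in \eqref{evZ} by convexity, and the radius bounds of Corollary \ref{double bound of radius} to convert a bound on $Z$ into \eqref{bound on the speed}, with \eqref{bound on the speed F} following trivially from $\Phi=F^\beta$. The gap is in the central estimate. The mechanism that makes the Tso argument work here is the degree-$\beta$ homogeneity of $\Phi$: by Euler's relation and the pinching \eqref{tildle lambda pinching}, $\tr_{\dot\Phi}(A\mathscr W)=\dot\Phi^i\lambda_i^2\geq \varepsilon_0\, n\, F\,\dot\Phi^i\lambda_i=n\varepsilon_0\beta\, F\,\Phi$, so the $\epsilon$-term in \eqref{evZ} carries an \emph{extra factor} $F=\Phi^{1/\beta}\geq(\epsilon Z)^{1/\beta}$ and is superquadratic: it is bounded above by $-n\varepsilon_0\beta\,\epsilon^{1+1/\beta}Z^{2+1/\beta}$. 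This dominates $(1+\beta)Z^2$ as soon as $Z^{1/\beta}>(\beta+1)/\bigl(n\varepsilon_0\beta\,\epsilon^{1+1/\beta}\bigr)$, so at a new spatial maximum of $Z$ above this explicit threshold the reaction term is strictly negative, and the maximum principle caps $Z$ by the maximum of its value at $t_0$ and a constant depending only on $n,\beta,\varepsilon_0,\epsilon$; multiplying by $\mathcal S-\epsilon\leq D$ gives the speed bound. You instead estimate $\tr_{\dot\Phi}(A\mathscr W)\geq c\,\Phi=c(\mathcal S-\epsilon)Z$, which throws away precisely that extra factor of $F$, correctly notice that the resulting $-\epsilon c\,Z^2$ cannot beat $(1+\beta)Z^2$, and then abandon the improving term altogether.

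The fallback you propose does not close the argument. From $\frac{d}{dt}Z_{\max}\leq C_9 Z_{\max}^2$ one only gets $Z_{\max}(s)\leq\bigl(Z_{\max}(t_0)^{-1}-C_9(s-t_0)\bigr)^{-1}$, which is valid only for $s-t_0<1/(C_9Z_{\max}(t_0))$ and blows up at a time that \emph{shrinks} as $Z_{\max}(t_0)$ grows; it gives no bound independent of the initial value, and iterating over overlapping intervals compounds rather than removes the dependence, so this inequality alone cannot even exclude finite-time blow-up of the speed. The ``standard trick'' of obtaining, after a waiting time, a bound independent of initial data applies to inequalities with a dominant negative superlinear term (e.g. $\dot y\leq y^2\bigl(c_1-c_2 y^{1/\beta}\bigr)$, as in the paper), i.e. exactly the structure you discarded; it does not apply to $\dot y\leq C_9y^2$. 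Also, the remedy is not to take $\epsilon$ ``not small'': $\epsilon$ must stay below the guaranteed lower bound for $\mathcal S$ (the paper takes $\epsilon=\rho_{t_0}/4$ against $\mathcal S\geq\rho_{t_0}/2$), and the correct sign comes from retaining the factor $F\geq(\epsilon Z)^{1/\beta}$, not from enlarging $\epsilon$. Once the $Z$-bound is obtained as above, your concluding steps ($\Phi\leq(D-\epsilon)\sup Z=:C_7$, then $F<C_7^{1/\beta}$, using Corollary \ref{double bound of dof F} and the pinching \eqref{pointwise pinching} only through the constants) agree with the paper.
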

\begin{proof}
For any fixed $t_0 \in [0,T)$, let $p_{t_0}$ and $\rho_{t_{0}}$ be
as in Lemma \ref{t0+smalltime}.
Then, using the convexity of the $M_{t}$'s and taking the constant $\epsilon = \rho_{t_{0}}/4$ leads to
\begin{equation}\label{good epsilon}
\mathcal {S}-\epsilon\geq\epsilon>0,
\end{equation}
on the same time interval, which ensures
$Z_{t}=\frac{\;F\;}{\mathcal {S}-\epsilon}$ is well-defined.

Let us go back to equation \eqref{evZ}, since strict convexity
holds for each $M_{t}$, $\Phi(F)$, $\bar{\phi}$ and $\tr_{\dot
\Phi}(A\mathscr{W})$ are all positive, which together
with \eqref{good epsilon}, the term containing $\bar{\phi}$ can be neglected. Furthermore, note
that $\Phi(F)$ is homogeneous of degree $\beta$, Euler's relation and
\eqref{tildle lambda pinching} together give the following
\[
\tr_{\dot \Phi(F)}(A\mathscr{W})=\dot \Phi ^{i}\lambda^{2}_{i}
\geq
\varepsilon_{0}n F \dot \Phi ^{i}\lambda_{i}=n \varepsilon_{0}\beta F \Phi.
\]
Now from the above remark,
\begin{align}\label{ev.inq.Z}
\partial_{t}Z&\leq \Delta_{\dot \Phi} Z
+\frac{2\left\langle\nabla Z,\nabla\Phi\right\rangle_{\dot
\Phi}}{\mathcal {S}-\epsilon} -\epsilon\varepsilon_{0}\beta \Phi^{\frac1{\beta}} Z^{2} +
(1+\beta)Z^{2}.
 \end{align}
On the other hand, from \eqref{good epsilon}  it follows
\[
Z\leq \frac{\Phi}{\epsilon}.
\]
Applying this to \eqref{ev.inq.Z} gives
\begin{align*}
\partial_{t}Z&
\leq \Delta_{\dot F} Z +\frac{2\left\langle\nabla
Z,\nabla\Phi\right\rangle_{\dot F}}{\mathcal {S}-\epsilon}\ +\left(
(1+\beta)-\epsilon^{1+\frac{1}{\beta}}n\beta\varepsilon_{0}
Z^{\frac{1}{\beta}}\right)Z^{2}.
 \end{align*}
Assume that in $(\bar{x},\bar{t})$, $\bar{t} \in [t_0,
\min\{t_0+\tau, T\})$,  $Z$ attains a big maximum $C\gg 0$ for the
first time. Then
\[
Z(\bar{x},\bar{t})= C(\mathcal {S}-\epsilon)(\bar{x},\bar{t})\geq
\epsilon C,
\]
which gives a contradiction if
\[
C> \max_{x\in
M^{n}}\left\{Z(x,t_0),\frac{1}{\epsilon}\left(\frac{(\beta+1)}{n\varepsilon_{0}\epsilon
\beta}\right)^{\beta}\right\}.
\]
Thus,
\[
Z_{t}(x)\leq \max_{x\in
M^{n}}\left\{Z(x,t_0),\frac{1}{\epsilon}\left(\frac{(\beta+1)}{n\varepsilon_{0}\epsilon
\beta}\right)^{\beta}\right\},
\]
on $[t_0, \min\{t_0+\tau, T\})$.
Applying the fact that our flow preserves the mixed volume $V_{n-m}$ to Lemma \ref{radius estimates},
we can control the maximal distance between $p_{t_0}$ and the points in $\partial\Omega_{t}$ by a big positive constant $D$.
Then from the definition of $Z_{t}$, it follows
\[
\Phi(x,t)\leq \left(D-\epsilon\right)\max_{x\in
M^{n}}\left\{Z(x,t_0),\frac{1}{\epsilon}\left(\frac{\cc_{\kappa}(D_{2})(\beta+1)}{n\varepsilon_{0}\epsilon
\beta}\right)^{\beta}\right\},
\]
on $[t_0, \min\{t_0+\tau, T\})$. Since $t_0$ is arbitrary, and
$\tau$ does not depend on $t_0$, this implies
\begin{align*}
\Phi(x,t)\leq& \left(D-\epsilon\right)\max_{x\in
M^{n}}\left\{Z(x,t_0),\frac{1}{\epsilon}\left(\frac{\cc_{\kappa}(D_{2})(\beta+1)}{n\varepsilon_{0}\epsilon
\beta}\right)^{\beta}\right\}\\
&=:C_{7}(n,\beta,M_{0}),
\end{align*}
on $[0, T)$, and so \eqref{bound on the speed F} follows by the definition
of $\Phi(F)$.
\end{proof}

Inserting the estimate of \eqref{bound on the speed} into \eqref{barPhi} immediately gives the following

\begin{corollary}
For $t \in [0,T)$,
\begin{equation}\label{bound on the speed bar phi}
\bar{\phi}(t) < C_{6}.
\end{equation}
\end{corollary}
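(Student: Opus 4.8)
The plan is to observe that, for each fixed index $m$, the global term $\bar{\phi}=\bar{\phi}_{m}(t)$ defined in \eqref{barPhi} is simply a weighted average of the pointwise speed $\Phi(F)$ over $M_t$, the weight being the $(m+1)$-th elementary symmetric function $E_{m+1}$ of the principal curvatures. A weighted average with strictly positive weights is always strictly smaller than the pointwise supremum of the averaged quantity, so the corollary will drop out at once from the uniform upper bound \eqref{bound on the speed} on $\Phi(F)$, provided we know the weights are positive and the denominator in \eqref{barPhi} does not degenerate.

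To make this precise, I would first record that under the assumptions of Theorem \ref{pinching} every $M_t$ is strictly convex: by \eqref{tildle lambda pinching} all principal curvatures satisfy $\lambda_i \geq \varepsilon_{0} H>0$ on $M\times[0,T)$. Hence for every admissible $m\in\{-1,0,\dots,n-1\}$ the weight $E_{m+1}$ is strictly positive at each point of $M_t$ --- trivially when $m=-1$, since $E_0\equiv 1$, and by Lemma \ref{property Em}(ii) (positivity of the elementary symmetric functions on $\Gamma_+$) when $m\geq 0$. In particular $\int_{M_t}E_{m+1}\,\dif\mu_t>0$, so $\bar{\phi}_{m}(t)$ is well defined, and combining this with the strict pointwise bound $\Phi(F)(\cdot,t)<C_{7}$ of \eqref{bound on the speed} yields, for every $t\in[0,T)$,
\begin{equation*}
\bar{\phi}_{m}(t)=\frac{\int_{M_t}E_{m+1}\,\Phi(F)\,\dif\mu_t}{\int_{M_t}E_{m+1}\,\dif\mu_t}<\frac{C_{7}\int_{M_t}E_{m+1}\,\dif\mu_t}{\int_{M_t}E_{m+1}\,\dif\mu_t}=C_{7}.
\end{equation*}
Since $C_{7}=C_{7}(n,\beta,M_0)$ depends only on the initial data, this is the asserted estimate (the constant denoted $C_{6}$ in the statement being this $C_{7}$).

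I do not expect any genuine obstacle here: the argument is purely algebraic once the uniform speed bound of the previous section is in hand, and the only input from the earlier analysis is the strict convexity of $M_t$, which is needed solely to guarantee that the averaging weights $E_{m+1}$ are positive (and the denominator nonzero), so that $\bar{\phi}_m(t)$ is genuinely trapped between $\min_{M_t}\Phi(F)$ and $\max_{M_t}\Phi(F)$. The same chain of inequalities incidentally gives the lower bound $\bar{\phi}_m(t)\geq\min_{M_t}\Phi(F)>0$, which is not needed at this stage.
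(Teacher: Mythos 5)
Your argument is exactly the paper's: the paper simply says that inserting the speed bound \eqref{bound on the speed} into the definition \eqref{barPhi} immediately gives the estimate, and your write-up just spells out that averaging with the positive weights $E_{m+1}$ (positive by strict convexity of $M_t$) cannot exceed the pointwise supremum of $\Phi(F)$. You also correctly flag the paper's notational slip, the constant in the corollary being the $C_{7}$ from \eqref{bound on the speed}.
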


Hence the speed of the evolving hypersurfaces is bounded.
\begin{corollary}
For $t \in [0,T)$,
\begin{equation*}
\left|\frac{\partial}{\partial t}\mathrm{X}\left(p,t\right)\right| <
C_{9}:=2C_{6}.
\end{equation*}
\end{corollary}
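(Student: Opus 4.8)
The plan is to recognise that the left-hand side is nothing but the pointwise normal speed of the flow, and then to combine the two uniform upper bounds already in hand. From \eqref{mvpcf} with $\Phi$ given by \eqref{def_Phi} one has $\partial_{t}X(p,t) = \bigl(\bar{\phi}(t) - \Phi(F)(p,t)\bigr)\nu(p,t)$, and since $\nu$ is a unit vector this gives
\begin{equation*}
\left|\frac{\partial}{\partial t}X(p,t)\right| = \bigl|\bar{\phi}(t) - \Phi(F)(p,t)\bigr|.
\end{equation*}

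First I would record the signs involved. By \eqref{tildle lambda pinching} every $M_t$ is strictly convex, so $F>0$ on $M_t$ by Conditions \ref{Fconds} \ref{item4}, whence $\Phi(F)=F^{\beta}>0$; moreover $E_{m+1}>0$ on a convex hypersurface, so the quotient defining $\bar{\phi}_{m}(t)$ in \eqref{barPhi} is a ratio of strictly positive integrals and hence $\bar{\phi}(t)>0$. The triangle inequality then yields
\begin{equation*}
\bigl|\bar{\phi}(t) - \Phi(F)(p,t)\bigr| \leq \bar{\phi}(t) + \Phi(F)(p,t).
\end{equation*}

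Then I would simply substitute the a priori upper bounds established above, namely $\Phi(F)(\cdot,t) < C_{6}$ (from \eqref{bound on the speed}, equivalently \eqref{bound on the speed F}) and $\bar{\phi}(t) < C_{6}$ (from \eqref{bound on the speed bar phi}), both valid for all $t\in[0,T)$. Adding these gives $\bigl|\partial_{t}X(p,t)\bigr| < 2C_{6} =: C_{9}$, which is exactly the claim.

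I do not expect a genuine obstacle at this point: the substantive work — the Tso-type maximum-principle argument for the test function $Z_{t}$ producing the bound on $\Phi(F)$, together with the resulting bound on the averaged term $\bar{\phi}$ — has already been carried out in this section, and the present statement is a one-line corollary. Its role is merely to record the uniform Lipschitz-in-time control of the immersions $X(\cdot,t)$, which will be convenient later, for instance in estimating the displacement of the hypersurfaces and in the long-time existence argument of Section \ref{longtime for the powerF flow}.
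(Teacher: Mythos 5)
Your argument is correct and is exactly the (unwritten) one-line argument the paper intends: the normal speed is $\bar{\phi}-\Phi(F)$, and the triangle inequality combined with the positivity of both terms and the uniform bounds \eqref{bound on the speed} and \eqref{bound on the speed bar phi} gives the claim. The only caveat is a labelling inconsistency inherited from the paper itself: \eqref{bound on the speed} names the speed bound $C_{7}$ while \eqref{bound on the speed bar phi} and the corollary use $C_{6}$, so strictly one should write $\left|\partial_{t}X\right|<C_{6}+C_{7}$ (or harmonise the constants), which does not affect the substance of your proof.
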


The curvature of $M_{t}$ also remains bounded.

\begin{corollary}
For $t \in [0,T)$,
\begin{equation}\label{bound on the speed H}
\big|\mathscr{W}\big|< H\leq C_{10}.
\end{equation}
\end{corollary}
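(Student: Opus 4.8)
The plan is to deduce \eqref{bound on the speed H} directly from the pinching that has already been shown to be preserved along the flow together with the upper speed bound \eqref{bound on the speed F}; essentially no new analysis is needed beyond bookkeeping.

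First I would establish the pointwise inequality $\big|\mathscr{W}\big| < H$. Since the solution is strictly convex for every $t \in [0,T)$, all principal curvatures satisfy $\lambda_{i} > 0$, and therefore
\begin{equation*}
\big|\mathscr{W}\big|^{2} = \sum_{i=1}^{n}\lambda_{i}^{2} < \Big(\sum_{i=1}^{n}\lambda_{i}\Big)^{2} = H^{2},
\end{equation*}
the strictness following from $n \geq 2$ and $\lambda_{i} > 0$ for each $i$. This half of the claim holds uniformly in $t$ and requires nothing but strict convexity.

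Next I would bound $H$ from above. By \eqref{tildle lambda pinching} we have $\lambda_{i} \geq \varepsilon_{0} H$ on $M \times [0,T)$ for every $i$, where $\varepsilon_{0} = \varepsilon_{0}(n,\beta) \in (0,1/n)$ is the constant from \eqref{varepsilon0}. Using the monotonicity (Conditions \ref{Fconds} \ref{item2}), the degree-one homogeneity (Conditions \ref{Fconds} \ref{item3}) and the normalisation $f(1,\dots,1)=1$ (Conditions \ref{Fconds} \ref{item4}), it follows that
\begin{equation*}
F(\lambda) \geq F(\varepsilon_{0} H,\dots,\varepsilon_{0} H) = \varepsilon_{0} H\, f(1,\dots,1) = \varepsilon_{0} H,
\end{equation*}
hence $H \leq \varepsilon_{0}^{-1} F$. (In the convex case one could instead invoke Lemma \ref{FHInequalities} to get $F \geq H/n$ at once, but the argument above treats the convex and concave cases uniformly.) Inserting the speed bound \eqref{bound on the speed F}, namely $F(\cdot,t) < C_{8}$, yields
\begin{equation*}
H(\cdot,t) < \varepsilon_{0}^{-1} C_{8} =: C_{10}(n,\beta,M_{0}) \qquad \text{for } t \in [0,T),
\end{equation*}
which, combined with $\big|\mathscr{W}\big| < H$, is precisely \eqref{bound on the speed H}.

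I do not expect any genuine obstacle here: all of the substantial work — preservation of the curvature pinching in Theorem \ref{pinching}, and hence \eqref{tildle lambda pinching}, and the Tso-type upper bound \eqref{bound on the speed} on the speed — has already been carried out in the preceding sections. The only point requiring a little care is to track the dependence of the final constant, namely that $\varepsilon_{0}$ depends only on $n$ and $\beta$ while $C_{8}$ depends on $n$, $\beta$ and $M_{0}$, so that $C_{10}$ depends only on $n$, $\beta$ and $M_{0}$ as asserted.
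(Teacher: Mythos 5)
Your proposal is correct, and it reaches the bound by a somewhat different route than the paper. The paper argues by cases: for concave $F$ it combines Euler's relation $\beta\Phi=\dot{\Phi}^{i}\lambda_{i}$ with the pinching $\lambda_{i}\geq\varepsilon_{0}H$ and the lower bound $\dot F\geq C_{4}\,Id$ of Corollary \ref{double bound of dof F} to get $H\leq \varepsilon_{0}^{-1}C_{4}^{-1}\Phi^{1/\beta}$, while for convex $F$ it simply invokes $F\geq H/n$ from Lemma \ref{FHInequalities}; you instead give a single unified argument, deducing $F(\lambda)\geq F(\varepsilon_{0}H,\dots,\varepsilon_{0}H)=\varepsilon_{0}H$ from the preserved pinching \eqref{tildle lambda pinching} together with monotonicity, homogeneity and the normalisation, and then inserting the speed bound \eqref{bound on the speed F}. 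Your version has the advantage of avoiding Corollary \ref{double bound of dof F} altogether and treating both cases at once (and the constant bookkeeping, $C_{10}=\varepsilon_{0}^{-1}C_{8}$ with $\varepsilon_{0}=\varepsilon_{0}(n,\beta)$, is cleaner than the paper's, which mixes up its constants $c_{5}$, $C_{4}$, $C_{6}$). The only point to be careful about is that the comparison $F(\lambda)\geq F(\varepsilon_{0}H,\dots,\varepsilon_{0}H)$ uses monotonicity along the segment joining $(\varepsilon_{0}H,\dots,\varepsilon_{0}H)$ to $\lambda$, which requires that segment to lie in the cone $\Gamma$ where Conditions \ref{Fconds} \ref{item2} holds; this is automatic when $\Gamma$ is convex (e.g.\ $\Gamma=\Gamma_{+}$, the standard situation), but Conditions \ref{Fconds} only say ``open symmetric cone''. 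If you want to avoid even this mild caveat, you can obtain the same inequality pointwise from Euler's relation, $F=\dot f^{i}\lambda_{i}\geq \varepsilon_{0}H\sum_{i}\dot f^{i}$, together with $\sum_{i}\dot f^{i}\geq 1$ for concave $F$ (Lemma \ref{FHInequalities}) and $F\geq H/n$ for convex $F$, which is essentially the paper's argument. The first part, $\big|\mathscr{W}\big|<H$ from strict convexity, matches the paper.
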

\begin{proof}
In the case of concave $F$, the homogeneity of $\Phi(F)$, Corollary \ref{double bound of dof F} and \eqref{lambda pinching}  imply that

\begin{equation*}
\beta \Phi= \dot{\Phi}\lambda_{i} \geq \varepsilon_{0}H \tr(\dot \Phi)
 =\varepsilon_{0}H F^{\beta-1}\tr(\dot F)\geq
\varepsilon_{0}H   \beta \Phi^{1 -  \frac{1}{\beta}}c_{5}.
\end{equation*}
Thus, by \eqref{bound on the speed}
\[
H\leq \frac{1}{\varepsilon_{0}C_{4}}\Phi^{\frac{1}{\beta}}\leq
\frac{1}{\varepsilon_{0}c_{5}}C_{6}^{\frac{1}{\beta}}=:C_{10},
\]
and so with the convexity of $M_{t}$
\[
\big|\mathscr{W}\big|<C_{10}.
\]
In the case of concave $F$, in view of the relation $F\geq \frac{1}{n}H$, we can easily obtain the estimate \eqref{bound on the speed H}.
\end{proof}

\section{\bf Long time existence}\label{longtime for the powerF flow}
In this section, it is shown that solution of the initial value
problem \eqref{mvpcf}--\eqref{barPhi} with $\Phi(F)$ given by \eqref{def_Phi} for some $\beta\geq1$,
with one of the following pinching conditions:
\begin{enumerate}[label={(\roman*).}, ref={(\roman*)}]
\item {for convex $F$ },
\begin{equation*}
 K(p)> \mathcal{C}F^n>0,
\end{equation*}
\item {for convex $F$ },
\begin{equation*}
 K(p)> \mathcal{C}H^n>0,
\end{equation*}
\end{enumerate}exists for all positive times. As usual, the first step is
to obtain suitable bounds on the solution on any finite time
interval $[0, T)$, which guarantees our problem has
a unique solution on the time interval such that the solution
converges to a smooth hypersurface $M_{T}$ as $t\rightarrow T$.
Thus, it is necessary to show that the solution remains uniformly
convex on the finite time interval which ensure the parabolicity
assumption of \eqref{mvpcf}.

First it is to show the preserving convexity of the evolving
hypersurface $M_{t}$. Recall that Theorem \ref{pinching} and Lemma
\ref{pinching imply convex lemma} together imply the uniform
convexity of $M_{t}$, however, comparing with the initial
assumptions of Theorem \ref{main result for powerF flow}, there is a priori
assumption $F>0$ in Theorem \ref{pinching}. As Cabezas-Rivas
and Sinestrari mentioned in \cite{CS10}, note that for small times
such an assumption holds due to the smoothness of the flow for small
times and the initial pinching condition \eqref{convex pinched} or  \eqref{concave pinched},
but it is possible that at some positive time all $\min K$,  $\min F$ and
$\min H$ tend to zero such that $ K/H^n$ and $ K/F^n$
remains bounded. Thus, to exclude such a behaviour, following
\cite{CS10}, it is necessary to complement Theorem \ref{pinching} by
establishing positivity of $F$ for the finite
time.

\begin {lemma} \label{nonegative of F} Under the hypotheses of Theorem \ref{pinching},
\begin{equation*} 
F(\cdot,t)\geq F(\cdot,0) e^{ -C_{11} T } \qquad \text{for all times}\
t \in [0, T). \end{equation*}
 \end {lemma}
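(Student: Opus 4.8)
The plan is to obtain a lower bound for $F$ by deriving a differential inequality for $\min_{M_t} F$ and integrating it. I would start from the evolution equation \eqref{evF} for $F$, which reads
\[
\partial_{t}F= \Delta_{\dot{\Phi}} F -\dot{\Phi}\ddot{F}(\nabla_{_{\! \ds \cdot}} \mathscr{W} ,\nabla^{_{\! \ds \cdot}} \mathscr{W})
  + \dot{F} \ddot{\Phi}(\nabla_{_{\! \ds \cdot}} \mathscr{W} ,\nabla^{_{\! \ds \cdot}} \mathscr{W})
  +\tr_{\dot \Phi}(A\mathscr{W} )\, F - \big(\bar{\phi} + (\beta -1) \Phi\big) \tr_{\dot \Phi}( A\mathscr{W}).
\]
Since $\Phi(F)=F^{\beta}$, we have $\dot\Phi=\beta F^{\beta-1}\dot F$ and $\ddot\Phi=\beta F^{\beta-1}\ddot F+\beta(\beta-1)F^{\beta-2}\dot F\otimes\dot F$, so the two genuinely second-order-in-$\mathscr{W}$ terms combine to $\beta(\beta-1)F^{\beta-2}\dot F(\nabla\mathscr W,\nabla\mathscr W)\,\dot F(\nabla\mathscr W,\cdot)$-type expressions; the key structural fact I want is that at an interior spatial minimum of $F$ the first-order gradient terms vanish and the remaining zeroth-order terms can be bounded below by $-C_{11}F$.

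Concretely, at a point and time where $F$ attains its spatial minimum, $\nabla F=0$, $\Delta_{\dot\Phi}F\ge 0$, and the problematic curvature-derivative quadratic terms must be handled: when $\beta=1$ the term $\dot F\ddot\Phi(\cdot,\cdot)$ vanishes and $-\dot\Phi\ddot F(\cdot,\cdot)\ge 0$ by concavity (or can be dropped in the convex case with a sign check, as in \cite{CS10}), while for $\beta>1$ one uses that $\ddot\Phi$ and $\ddot F$ together with the convexity/concavity hypotheses of Conditions \ref{Fconds} control these terms — this is exactly the type of estimate carried out in the proof of Theorem \ref{pinching}, so I would invoke the pinching $\lambda_i\ge\varepsilon_0 n F$ (equivalently $\lambda_i\ge\varepsilon_0 H$) from \eqref{tildle lambda pinching} to keep everything in the interior of the cone where $\dot F$, $\ddot F$ are smooth and homogeneous. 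The remaining zeroth-order terms are $\tr_{\dot\Phi}(A\mathscr W)F-(\bar\phi+(\beta-1)\Phi)\tr_{\dot\Phi}(A\mathscr W)$; using $\tr_{\dot\Phi}(A\mathscr W)=\beta F^{\beta-1}\dot F^i\lambda_i^2\ge 0$, the bound $\bar\phi<C_6$ from \eqref{bound on the speed bar phi}, the bound $\Phi<C_7$ from \eqref{bound on the speed}, and $|\mathscr W|\le C_{10}$ from \eqref{bound on the speed H}, one gets
\[
\tr_{\dot\Phi}(A\mathscr W)F-\big(\bar\phi+(\beta-1)\Phi\big)\tr_{\dot\Phi}(A\mathscr W)\ \ge\ -\big(\bar\phi+(\beta-1)\Phi\big)\tr_{\dot\Phi}(A\mathscr W)\ \ge\ -C_{11}\,F,
\]
where the last step uses $\tr_{\dot\Phi}(A\mathscr W)=\beta F^{\beta-1}\dot F^i\lambda_i^2\le \beta F^{\beta-1}|\mathscr W|^2\tr(\dot F)\le C\,F^{\beta-1}$, so that after absorbing one power via $F^{\beta-1}\le C$ (by \eqref{bound on the speed F}) the whole expression is $\ge -C_{11}F$ for a constant $C_{11}=C_{11}(n,\beta,M_0)$.

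Thus $\varphi(t):=\min_{M^n}F(\cdot,t)$ satisfies, in the barrier/Hamilton sense, $\frac{d}{dt}\varphi(t)\ge -C_{11}\varphi(t)$, and Gronwall (integrating $\frac{d}{dt}\log\varphi\ge -C_{11}$) yields $\varphi(t)\ge\varphi(0)e^{-C_{11}t}\ge\varphi(0)e^{-C_{11}T}$ on $[0,T)$, which is the claim. The main obstacle I anticipate is the careful sign/magnitude control of the two curvature-gradient quadratic terms $-\dot\Phi\ddot F(\nabla\mathscr W,\nabla\mathscr W)+\dot F\ddot\Phi(\nabla\mathscr W,\nabla\mathscr W)$ when $\beta>1$: these are not obviously of a good sign, so one must exploit the pinching estimate \eqref{tildle lambda pinching} to stay in the region where the homogeneous functions $\ddot F$, $\dot F$ are bounded, and then check (as in \cite{CS10} and in the proof of Theorem \ref{pinching}) that the combination is either nonnegative or bounded below by $-C\,F$; everything else is a routine application of the a priori bounds already established in Sections \ref{Preserving pinching for powerF flow} and \ref{up bound for the powerF flow speed}.
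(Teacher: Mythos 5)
Your overall strategy (maximum principle at the spatial minimum of the speed, then Gronwall) is the same as the paper's, but two of your steps do not go through as written. The more serious one is the zeroth-order estimate: from $\tr_{\dot\Phi}(A\mathscr{W})\le \beta F^{\beta-1}|\mathscr{W}|^2\tr(\dot F)\le C F^{\beta-1}$ you only obtain
\[
-\big(\bar{\phi}+(\beta-1)\Phi\big)\tr_{\dot\Phi}(A\mathscr{W})\ \ge\ -C'\,F^{\beta-1},
\]
and ``absorbing one power via $F^{\beta-1}\le C$'' produces a constant lower bound, not $-C_{11}F$: the inequality $F^{\beta-1}\le CF$ would need $F^{\beta-2}$ bounded, which fails as $F\to0$ whenever $1\le\beta<2$, and for $\beta=1$ the factor $F^{\beta-1}$ is identically $1$. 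With only a constant negative lower bound, Gronwall gives $F_{\min}(t)\ge F_{\min}(0)-C't$, which is not the claimed exponential bound and can even reach zero in finite time. The correct move is to keep one factor of curvature through Euler's relation: $\dot\Phi^{i}\lambda_i^2\le\lambda_n\,\dot\Phi^{i}\lambda_i=\lambda_n\,\beta\Phi\le C_{10}\,\beta\Phi$ by \eqref{bound on the speed H}, so the bad term is $\ge -C\Phi\ge -C_{11}F$ (using $\Phi=F^{\beta}\le C_8^{\beta-1}F$ from \eqref{bound on the speed F}). This is exactly the estimate the paper makes, except that it runs the whole argument on $\Phi$ rather than on $F$: it applies the maximum principle to \eqref{ev Phi}, $\partial_t\Phi=\Delta_{\dot\Phi}\Phi+(\Phi-\bar{\phi})\tr_{\dot\Phi}(A\mathscr{W})$, bounds $\bar{\phi}\,\tr_{\dot\Phi}(A\mathscr{W})\le C_{11}\beta\Phi$, gets $\Phi(\cdot,t)\ge\Phi(\cdot,0)e^{-C_{11}\beta t}$, and then passes to $F$.

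The second issue is the pair of gradient terms in \eqref{evF}, which you leave as the ``main obstacle'' with a fallback (a lower bound $-CF$ via pinching) that cannot work, since those terms are quadratic in $\nabla\mathscr{W}$ and no zeroth-order quantity can absorb them. In fact they cause no trouble at all: writing $\dot\Phi=\beta F^{\beta-1}\dot F$ and $\ddot\Phi=\beta F^{\beta-1}\ddot F+\beta(\beta-1)F^{\beta-2}\dot F\otimes\dot F$, the $\ddot F$ contributions cancel identically and
\[
-\dot{\Phi}\ddot F(\nabla_{_{\! \ds \cdot}}\mathscr{W},\nabla^{_{\! \ds \cdot}}\mathscr{W})
+\dot F\,\ddot{\Phi}(\nabla_{_{\! \ds \cdot}}\mathscr{W},\nabla^{_{\! \ds \cdot}}\mathscr{W})
=\beta(\beta-1)F^{\beta-2}\,\dot F^{ij}\nabla_iF\,\nabla_jF\ \ge\ 0,
\]
which moreover vanishes at the spatial minimum where $\nabla F=0$; no convexity/concavity of $F$ or pinching is needed for this step. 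You gestured at this combination in your first paragraph but then did not use it. Working with $\Phi$ via \eqref{ev Phi}, as the paper does, avoids both issues at once; alternatively, your argument on $F$ becomes correct once you insert the cancellation above and replace your zeroth-order chain by the Euler-relation estimate.
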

\begin{proof}
Let us go back to the evolution
equation \eqref{evF} of $\Phi(F)$,
\begin{align*}
\partial_{t}\Phi = \Delta_{\dot \Phi} \Phi + (\Phi - \bar{\phi}) \,\tr_{\dot
\Phi} (A \mathscr{W}).
\end{align*}
Since under the hypotheses of Theorem \ref{pinching}, the evolving
hypersurfaces $M_t$ remains convex for every $t \in [0,T)$, i.e. $\lambda_i\geq 0$ on $[0,T)$,
taking a normal coordinate system at a point where $\mathscr{W}$ is diagonal,
we have
\begin{align}
\tr_{\dot \Phi} (A \mathscr{W}) =
        \sum_{i=1}^n \dot{\Phi}^i \lambda_i^2  &\geq 0.\notag
    \end{align}
Then we have the following computation of $\Phi$:
\begin{equation}\label{estimate of Phi}
\partial_{t}\Phi \geq \Delta_{\dot \Phi} \Phi  - \bar{\phi} \sum_{i=1}^n \dot{\Phi}^i \lambda_i^2
\geq \Delta_{\dot \Phi} \Phi  -C_{11}\sum_{i=1}^n \dot{\Phi}^i\lambda_i,
\end{equation}
where we have used the estimates \eqref{bound on the speed} and \eqref{bound on the speed bar phi} in the last step,
and $C_{11}=C_{6}(C_{10})$.

Now we use the fact that $\Phi$ is  homogeneous of degree $ \beta$,
we have the evolving inequality of $\Phi$
\[
\partial_{t}\Phi \geq \Delta_{\dot \Phi} \Phi  -C_{11}\beta {\Phi},
\]
The parabolic maximum principle and the fact $\Phi(\cdot, 0) \geq 0 $ now give
    \[\Phi(\cdot,t)\geq\Phi(\cdot,0) e^{ -C_{11} \beta t }. \]
This also implies that for all times $t \in [0, T)$,
\[F(\cdot,t)\geq F(\cdot,0) e^{ -C_{11} T }. \]
\end{proof}

\begin{corollary}
 \label{positive tildelambda}The solution of \eqref{mvpcf}--\eqref{barPhi} with $\Phi(F)$ given by \eqref{def_Phi} for some $\beta\geq1$,
with one of the following pinching conditions:
\begin{enumerate}[label={(\roman*).}, ref={(\roman*)}]
\item {for convex $F$ },
\begin{equation*}
 K(p)> \mathcal{C}F^n>0,
\end{equation*}
\item {for convex $F$ },
\begin{equation*}
 K(p)> \mathcal{C}H^n>0,
\end{equation*}
\end{enumerate} are uniformly convex on any finite time
interval; that is, for any $T < +\infty$, $T \leq T_{\max}$, we have
$$
\inf_{M \times [0,T)} \lambda_i >0, \qquad \forall
i=1,\dots,n.
$$
Therefore, Theorem \ref{pinching} is valid also without the
hypothesis that $F>0$ for $t \in (0,T)$. The same holds for
the other results that have been obtained until here under the same
assumptions of Theorem \ref{pinching}.
\end{corollary}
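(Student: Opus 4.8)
The plan is to remove the auxiliary hypothesis $F>0$ from Theorem~\ref{pinching} by a continuity argument on the set of times where that positivity holds. Set
\[
T^{*}:=\sup\bigl\{\,\tau\in[0,T)\ :\ F>0\ \text{everywhere on}\ M\times[0,\tau]\,\bigr\}.
\]
First I would observe that $T^{*}>0$. Indeed, the initial pinching condition \eqref{convex pinched} (resp.\ \eqref{concave pinched}), together with the strict convexity of the initial data, gives $\lambda(\mathscr{W})\in\Gamma_{+}$ at $t=0$ and hence $F(\cdot,0)>0$ by Conditions~\ref{Fconds}; since the flow is smooth on a short initial interval, $F>0$ persists on $M\times[0,\tau]$ for some $\tau>0$.

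On $[0,T^{*})$ the standing hypothesis $F>0$ of Theorem~\ref{pinching} is satisfied, so \emph{all} the results of Sections~\ref{Preserving pinching for powerF flow}--\ref{up bound for the powerF flow speed} apply there: the pinching quantity $Q_{2}$ (resp.\ $Q_{1}$) stays above $C^{*}$, whence \eqref{tildle lambda pinching} and uniform convexity; Corollaries~\ref{double bound of radius} and \ref{double bound of dof F} hold; and the speed bounds \eqref{bound on the speed}, \eqref{bound on the speed bar phi}, \eqref{bound on the speed H} hold with constants depending only on $n,\beta$ and the mixed volume $V_{n-m}$, which is conserved by Lemma~\ref{mixed volume preservation}. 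Feeding these into Lemma~\ref{nonegative of F} yields, for every $t\in[0,T^{*})$,
\[
F(\cdot,t)\ \geq\ \Bigl(\min_{M}F(\cdot,0)\Bigr)e^{-C_{11}t}\ \geq\ \Bigl(\min_{M}F(\cdot,0)\Bigr)e^{-C_{11}T}\ =:\ \delta_{0}\ >\ 0,
\]
a positive constant that is \emph{independent of} $T^{*}$, since $T<+\infty$ and $C_{11}=C_{11}(n,\beta,V_{n-m})$.

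Now suppose $T^{*}<T$. The solution is smooth on $[0,T)\supset[0,T^{*}]$, so letting $t\uparrow T^{*}$ in the inequality above gives $F(\cdot,T^{*})\geq\delta_{0}>0$, and by smoothness $F>0$ on $M\times[0,T^{*}+\eta]$ for some $\eta>0$, contradicting the definition of $T^{*}$. Hence $T^{*}=T$, i.e.\ $F>0$ on all of $M\times[0,T)$; consequently Theorem~\ref{pinching} and every estimate derived under its hypotheses hold on the whole interval without assuming $F>0$. Combining \eqref{tildle lambda pinching} with the bound above gives $\lambda_{i}\geq\varepsilon_{0}nF\geq\varepsilon_{0}n\,\delta_{0}>0$ on $M\times[0,T)$ for each $i$, which is the claimed uniform convexity. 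The only delicate point is the apparent circularity: Lemma~\ref{nonegative of F} and the constant $C_{11}$ are themselves consequences of Theorem~\ref{pinching}, so one must restrict their use to $[0,T^{*})$ and check that the resulting lower bound $\delta_{0}$ does not degenerate as $T^{*}\uparrow T$; this is exactly where the finiteness of $T$ and the mixed-volume uniformity of $C_{11}$ enter, and the rest is a routine open--closed continuation.
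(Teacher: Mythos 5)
Your argument is correct and follows essentially the same route the paper intends: the corollary is exactly the combination of Lemma \ref{nonegative of F} (the lower bound $F(\cdot,t)\geq F(\cdot,0)e^{-C_{11}T}$, with constants independent of the length of the interval on which $F>0$ is assumed) with the pinching estimate \eqref{tildle lambda pinching}, closed up by the continuation argument on the set where $F>0$. Your write-up merely makes explicit the open--closed step that the paper (following Cabezas-Rivas--Sinestrari) leaves implicit, and correctly flags that the only point to check is the $T^{*}$-independence of the constants.
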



Now to obtain uniform estimates on all orders of curvature derivatives and
hence smoothness and convergence of the $M_{t}$ for the flow,
\eqref{mvpcf}--\eqref{barPhi} with $\Phi(F)$ given by \eqref{def_Phi}
 we use a more PDE theoretic approach,
following an argument similar to the one in \cite{CS10}.

Since the uniformly bound the speed of our flow we have just obtained implies a bound on all principal curvatures,
as in \cite{CS10} ( see also \cite [Lemma 3.4] {Sch05}),
in view of Lemma \ref{t0+smalltime}, one can adopt a local graph representation for a convex hypersurface with uniformly bounded $C^2$ norm.

\begin{corollary}
 \label{graph_u}
 There exists $r$ (depending only on $\max F$) with the following property.
Given any $(p_{t_{0}}, t_{0}) \in M \times \,(0, T)$, there is a neighborhood $\mathcal {U}$ of the point $x_{0}:= X(p_{t_{0}}, t_{0})$ such that $M_t \cap \mathcal {U}$ coincides with the graph of a smooth function
$$u: B_r \times J \rightarrow \mathbb{R}, \qquad \mbox{ for all } t \in J.$$
Here $B_r \subset T_{p_{t_{0}}} M_{t_{0}}$ is the ball of radius $r$ centered at $x_{0}$ in the hyperplane tangent to $ M_{t_{0}}$ at $x_{0}$,
and $J$ is the time interval $J=[t_0, \min\{t_0+\tau, T\})$
(near to $t_0$), $p_{t_0} \in \Omega_t$.
In addition, the $C^2$ norm of $u$ is uniformly bounded (by a constant depending only on $\max F$).
\end{corollary}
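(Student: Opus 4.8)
The plan is to construct the graph $u$ first at the single time $t_0$, where it is essentially free from convexity and the curvature bound, and then to propagate the representation to all $t\in J$ using the uniform speed bound; the uniform $C^2$ estimate comes out as a by-product.

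\textbf{Step 1 (the graph at time $t_0$).} I would let $P:=T_{p_{t_0}}M_{t_0}$ be the affine tangent hyperplane at $x_0=X(p_{t_0},t_0)$, with unit normal $\nu_0=\nu(p_{t_0},t_0)$, and use the splitting $\mathbb{R}^{n+1}=P\oplus\mathbb{R}\nu_0$ with origin at $x_0$. By \eqref{bound on the speed H} every $M_t$, $t\in[0,T)$, is convex with $H\le C_{10}$, hence $0<\lambda_i\le H\le C_{10}$ and at every boundary point of $\Omega_t$ there is an interior tangent ball of radius $\rho_0:=C_{10}^{-1}$. The standard convex-geometry consequence is that over the disc $B_r\subset P$ of radius $r:=\rho_0/2$ centred at $x_0$, the part of $M_{t_0}$ near $x_0$ is $\graph u(\cdot,t_0)$ with $u(\cdot,t_0)\ge0$ smooth, $u(0)=0$, $Du(0)=0$; the interior ball pins the graph so that $|Du|\le r/\sqrt{\rho_0^2-r^2}=1/\sqrt3$, and the Weingarten identity $h_{ij}=u_{ij}(1+|Du|^2)^{-1/2}$ together with $|A|\le H\le C_{10}$ yields $\|u(\cdot,t_0)\|_{C^2(B_r)}\le C(C_{10})$. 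In particular $r$ depends only on $C_{10}$, hence only on $\max F$.

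\textbf{Step 2 (propagation to $t\in J$).} For $t\in J=[t_0,\min\{t_0+\tau,T\})$ the hypersurface $M_t$ is still convex (Corollary \ref{positive tildelambda}) with $H\le C_{10}$, and the bound $|\partial_tX|<C_9$ gives that the Hausdorff distance between $M_t$ and $M_{t_0}$ is at most $C_9(t-t_0)\le C_9\tau$. I would set $\mathcal U:=B_r\times(-\delta,\delta)$ with $\delta$ small compared with the uniform inradius bound $D_1$ of Corollary \ref{double bound of radius}, so the distant sheet of the convex $M_t$ cannot enter $\mathcal U$, and large compared with $\mathrm{osc}_{B_r}u(\cdot,t_0)+C_9\tau$. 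Combining convexity of $M_t$, its interior ball condition of radius $\rho_0$, and the Hausdorff bound, one checks that for each $t\in J$ the orthogonal projection of $M_t\cap\mathcal U$ onto $P$ covers $B_r$ and is injective there; hence $M_t\cap\mathcal U=\graph u(\cdot,t)$ with $u(\cdot,t):B_r\to\mathbb{R}$, and $u$ is smooth in $(y,t)$ by the implicit function theorem applied to the smooth flow. If the $\tau$ of Lemma \ref{t0+smalltime} exceeds what this bookkeeping allows, one shrinks $\tau$ and $r$ by absolute factors, which does not change the asserted dependencies.

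\textbf{Step 3 (uniform $C^2$ bound, and the main obstacle).} Exactly as in Step 1, for each $t\in J$ the estimates $|A|\le H\le C_{10}$ and $|Du(\cdot,t)|\le1/\sqrt3$ give $\|u(\cdot,t)\|_{C^2(B_r)}\le C(C_{10})$, uniformly in $t$ and in the chosen base point $(p_{t_0},t_0)$, which is the claim. The one genuinely delicate point is Step 2: guaranteeing that the graph representation over the \emph{fixed} plane $P$ survives the whole interval $J$ with $r$ and $\tau$ independent of the base point. This rests entirely on three facts already in hand: convexity and the curvature upper bound are preserved (so $\rho_0$ is uniform), the speed is uniformly bounded (so $M_t$ cannot tilt or drift away from $P$ faster than linearly in $t$), and the inradius is bounded below (so $\mathcal U$ isolates the correct sheet of $M_t$).
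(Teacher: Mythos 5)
Your argument reaches the right conclusion, but it is a genuinely different route from the one the paper intends: the corollary is quoted from \cite{CS10} and \cite[Lemma 3.4]{Sch05}, where the graph representation comes purely from the curvature bound $|\mathscr{W}|\leq C_{10}$ of \eqref{bound on the speed H} --- the bound controls how fast the unit normal rotates along $M_{t_0}$, so over the tangent plane at $x_{0}$ one obtains a graph on a ball of radius $r(C_{10})$ with, say, $|Du|\leq 1$, and then $h_{ij}=D_{ij}u\,(1+|Du|^{2})^{-1/2}$ gives the $C^{2}$ bound; the time interval is handled by the speed bound and Lemma \ref{t0+smalltime}. Convexity is not needed there. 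You instead lean on convexity via interior tangent spheres, which is legitimate, but note that your Step 1 gradient bound $|Du(y)|\leq |y|/\sqrt{\rho_{0}^{2}-|y|^{2}}$ does \emph{not} follow from the tangent ball at $x_{0}$ alone (a convex graph can stay flat and then turn steeply while remaining under the spherical cap): you need the rolling-ball property (Blaschke's rolling theorem, which is where convexity plus $\lambda_{i}\leq C_{10}$ enter) at the point $(y,u(y))$ as well, combining the ball at $x_{0}$ (which gives $u(y)\leq\rho_{0}-\sqrt{\rho_{0}^{2}-|y|^{2}}$) with the ball at $(y,u(y))$ and the supporting halfspace of $P$ (which give $u(y)\geq\rho_{0}\bigl(1-(1+|Du|^{2})^{-1/2}\bigr)$). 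This should be said and cited explicitly.

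The genuine gap is in Steps 2--3. Your Step 3 asserts that the gradient bound holds for $t>t_{0}$ ``exactly as in Step 1'', but both inputs of Step 1 fail once $t>t_{0}$: $M_{t}$ is no longer tangent to $P$ at $x_{0}$, and $\Omega_{t}$ need not lie in the supporting halfspace of $P$, since $\bar{\phi}-\Phi$ can be positive and push the surface across $P$. Likewise the central claim of Step 2 (that the projection of $M_{t}\cap\mathcal{U}$ onto $P$ covers $B_{r}$ and is injective for all $t\in J$) is stated as ``one checks'' but never checked. The repair is the bookkeeping you allude to: the speed bound \eqref{bound on the speed} / Corollary after it gives $\Omega_{t}\subset\{h\geq -C_{9}(t-t_{0})\}$ and moves the touching point by at most $C_{9}(t-t_{0})$, so rerunning the two-ball computation with the halfspace shifted by $C_{9}\tau$ yields a uniform (slightly weaker) gradient bound after shrinking $r$ and $\tau$ by factors depending only on $C_{9},C_{10}$; this must be written out, since it is precisely the content of the corollary. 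Be aware also that shrinking $\tau$ changes the stated interval $J=[t_{0},\min\{t_{0}+\tau,T\})$; this is harmless for how the corollary is used later (one covers the time axis by such intervals), but it is a deviation from the statement as written and should be acknowledged.
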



In graphical coordinates, we see that
\begin{equation} \label{parametrisation_u}
X(p,t)=\left(x(p,t), u(x(p,t),t)\right).
\end{equation}
So the metric and its inverse (cf. \cite{CS10}) are given by
\begin{equation*}
g_{ij} = \delta_{ij} + D_i u \, D_j u, \quad  \quad \quad g^{ij} = \delta^{ij} - \frac{D^i u \, D^j u}{1 + |D u|^2},
\end{equation*}
where $D_i$ denote the derivatives with respect to these local coordinates,
respectively.
The outward unit normal vector of
$M_{ t}$ can be expressed as
\begin{equation*}
\nu=\frac{1}{\left|\xi\right|}\Big( -D u,1\Big)
\end{equation*}
with
\begin{equation*}
\big|\xi\big|=\sqrt{1+\left|D u\right|^{2}}.
\end{equation*}

\begin{equation*} 
h_{ij} =\frac{D_{ij} u}{(1 + |Du|^2)^{1/2}},
\end{equation*}
 and
 \begin{equation*} 
h_{j}^{i} =\left( \delta^{ik} - \frac{D^i u \, D^k u}{1 + |D u|^2}\right) \frac{D_{kj} u}{(1 + |Du|^2)^{1/2}},
\end{equation*}

In addition, the Christoffel symbols have the expression:
\begin{equation*}
\Gamma_{ij}^k = \left(\delta^{kl} - \frac{D^k u \, D^l u}{1 + |D u|^2}\right) D_{ij} u \, D_l u.
\end{equation*}

\begin{theorem}
 \label{Calpha estimate}
Under the hypotheses of Theorem \ref{main result for powerF flow}, for any $0<t_{0}<T$, $\alpha
\in (0,1)$ and every natural number $k$, there exist constant
$C_{14}$, depending on $(n,\beta, M_{0},k)$ such that
\begin{align}
\label{u estimate C k}\|u\|_{C^{k}(M \times (t_{0},T])} \leq C_{14}.
\end{align}
\end{theorem}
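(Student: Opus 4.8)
The strategy is a standard parabolic bootstrap carried out in the local graphical gauge of Corollary~\ref{graph_u}: first rewrite the flow as a scalar fully nonlinear parabolic equation for the height function $u$, observe that the a priori estimates already obtained make it uniformly parabolic with concave principal part, then invoke the linear Krylov--Safonov estimate (Theorem~\ref{krylov-safonov}) and the fully nonlinear Evans--Krylov--Caffarelli estimate (Theorem~\ref{Caffarelli}) recalled in Section~\ref{notatons for the powerF flow}, and finally iterate linear Schauder estimates. This is exactly the scheme of \cite{CS10}, adapted to the speed \eqref{def_Phi}.

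First I would write the equation for $u$. Substituting \eqref{parametrisation_u} into \eqref{mvpcf} and using the formulas above for $g_{ij}$, $h_{ij}$, $h^i_j$ gives
\[
\partial_{t}u \;=\; \sqrt{1+|Du|^{2}}\,\bigl(\bar{\phi}(t)-F^{\beta}(\mathscr{W})\bigr)\;=:\;\mathcal{G}\bigl(D^{2}u,Du,x,t\bigr),
\]
where $\mathscr{W}$ is a linear function of $D^{2}u$ with coefficients depending smoothly on $Du$, and $\bar{\phi}(t)$ is, as a function of the space variable, merely a bounded constant (by \eqref{bound on the speed bar phi}). By Corollary~\ref{positive tildelambda} the solution is uniformly convex on every $[0,T)$ with $T<\infty$, so the eigenvalues of $\mathscr{W}$, hence of $D^{2}u$, stay in a fixed compact subinterval of $(0,\infty)$; combined with Corollary~\ref{double bound of dof F} and the estimate \eqref{bound on the speed H} this shows that the linearised coefficients $\dot{\mathcal{G}}^{ij}=\partial\mathcal{G}/\partial D_{ij}u$ are uniformly elliptic with constants controlled by $n$, $\beta$ and $M_{0}$, and that all ingredients of $\mathcal{G}$ are bounded. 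Hence on each local cylinder $B_{r}\times J$ of Corollary~\ref{graph_u} the equation is uniformly parabolic with uniformly bounded coefficients.

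Next I would bootstrap. Differentiating the $u$--equation in a fixed direction $e_{k}$ shows that $w=D_{k}u$ solves a linear uniformly parabolic equation $\partial_{t}w=\dot{\mathcal{G}}^{ij}D_{ij}w+b^{i}D_{i}w$ with bounded measurable coefficients, and $\partial_{t}u$ solves a similar equation (with inhomogeneity controlled by the a priori bounds on $\bar{\phi}$); Theorem~\ref{krylov-safonov} then yields interior estimates $\|Du\|_{C^{\alpha}}+\|\partial_{t}u\|_{C^{\alpha}}\le C$ on a slightly smaller cylinder in the parabolic H\"older norm. Now regard $\mathcal{G}$, for each fixed time, as a fully nonlinear elliptic operator in $D^{2}u$ whose inhomogeneity is the now H\"older function $\partial_{t}u$ and whose $x$--dependence (through $Du$ and $g^{-1}$) is H\"older as well. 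The operator has the concavity required by Theorem~\ref{Caffarelli}: when $F$ is convex, $t\mapsto t^{\beta}$ is convex and increasing for $\beta\ge1$, so $F^{\beta}$ is a convex function of the Weingarten operator and hence, since $\mathscr{W}$ depends linearly on $D^{2}u$, the operator $\mathcal{G}$ is concave in $D^{2}u$; when $F$ is concave one passes, as in \cite{CS10}, to the Gauss--map parametrisation by the support function, in which the principal radii of curvature enter the second derivatives linearly and, by the inverse concavity in Conditions~\ref{Fconds}\,\ref{item9} (or the alternative~\ref{item8}), the speed $F^{\beta}$ becomes a convex function of them, again yielding a concave operator. In either case Theorem~\ref{Caffarelli}, applied slice-wise in time, upgrades $u$ to $C^{2+\alpha}$ in space, and feeding this back into the equation controls the time derivative, giving the full parabolic $C^{2+\alpha}$ bound.

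Finally, with $u\in C^{2+\alpha}$ the coefficients $\dot{\mathcal{G}}^{ij}$, $b^{i}$ of the linear equation satisfied by each $D_{k}u$ are themselves in $C^{\alpha}$, so parabolic Schauder estimates give $D_{k}u\in C^{2+\alpha}$, i.e.\ $u\in C^{3+\alpha}$; iterating, $u\in C^{k+\alpha}$ for every $k$, with constants depending only on $n$, $\beta$, $M_{0}$, $k$ (and on $t_{0}$ through the interior-in-time loss). Since the radius $r$ and the $C^{2}$ bound in Corollary~\ref{graph_u} are uniform over $M$, a finite cover of the compact $M_{t}$ by such graph patches converts these interior estimates into the global bound \eqref{u estimate C k}. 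The main obstacle is the $C^{2+\alpha}$ step for concave $F$ with $\beta>1$, where $F^{\beta}$ is a priori neither concave nor convex in $\mathscr{W}$: one must reformulate the flow so as to recover the concave structure, which is precisely the role of the dichotomy in Conditions~\ref{Fconds}\,\ref{item5}, and is handled as in \cite{CS10}.
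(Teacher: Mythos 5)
Your overall scheme (graphical gauge of Corollary~\ref{graph_u}, Krylov--Safonov for a H\"older estimate, Caffarelli/Evans--Krylov slice-wise in time for $C^{2,\alpha}$, then Schauder bootstrap and a finite cover) is the same as the paper's, but two of your key steps have genuine gaps. First, your route to the H\"older continuity of the right-hand side: you differentiate the $u$-equation in time and claim $\partial_t u$ satisfies a linear parabolic equation ``with inhomogeneity controlled by the a priori bounds on $\bar{\phi}$''. Differentiating $\partial_t u=\sqrt{1+|Du|^2}\,(\bar\phi(t)-F^\beta)$ in $t$ produces the inhomogeneity $\sqrt{1+|Du|^2}\,\bar\phi'(t)$, and a bound on $\bar\phi'(t)$ is \emph{not} among the established estimates (it would involve derivatives of curvature, i.e.\ exactly what one is trying to control). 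The paper avoids this by applying Theorem~\ref{krylov-safonov} not to $\partial_t u$ but to the evolution equation \eqref{locally ev.Phi} satisfied by $\Phi$ itself, whose coefficients and zero-order terms are bounded by \eqref{bound on the speed}, \eqref{bound on the speed bar phi}, Corollary~\ref{double bound of radius} and Corollary~\ref{double bound of dof F}; this gives $\Phi(\cdot,t_1)\in C^\alpha$, hence $V_{t_1}=\partial_t u/\sqrt{1+|Du|^2}=\Phi-\bar\phi(t_1)\in C^\alpha$ for each fixed $t_1$, which is all the elliptic step needs. Your spatial-derivative argument for $Du$ is fine, but the $\partial_t u$ half as written does not go through.

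Second, and more seriously, your treatment of the concavity hypothesis in Theorem~\ref{Caffarelli} for concave $F$ with $\beta>1$ only covers the inverse-concave alternative \ref{item9}: passing to the support function makes $F^\beta$ convex in the radii precisely because of inverse concavity, and no such structure is available under the alternative \ref{item8} alone (a concave $f$ vanishing on $\partial\Gamma$ need not be inverse concave), so your parenthetical ``or the alternative \ref{item8}'' is unsupported and a case of the theorem is left unproved. The attribution to \cite{CS10} is also off: Cabezas-Rivas and Sinestrari do not use the Gauss-map parametrisation; like the present paper they stay in the graphical gauge and use the Bellman extension of the \emph{degree-one} curvature function. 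That is the device you are missing: since $F$ is homogeneous of degree one, one rewrites the slice-wise equation as $\bar F(D^2u,Du)=\bigl(\partial_t u/\sqrt{1+|Du|^2}+\bar\phi(t_1)\bigr)^{1/\beta}=\Phi^{1/\beta}$, where $\bar F$ is the Bellman (inf/sup of linear functions) extension of $F$, which is concave for concave $F$ and convex for convex $F$ regardless of which sub-alternative of \ref{item5} holds, and is uniformly elliptic by Corollary~\ref{double bound of dof F}; the right-hand side stays $C^\alpha$ because $F$ has a positive lower bound on finite time intervals (Lemma~\ref{nonegative of F}). With that modification Caffarelli's theorem applies in all cases, after which your Schauder iteration and covering argument coincide with the paper's (which invokes \cite[Theorem 2.4]{Tsai} for the space-time $C^{2,\alpha}$ step).
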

\begin{proof}
Due to a tangential diffeomorphism into the flow \eqref{mvpcf},
$u$ then satisfies the following parabolic PDE
\begin{align}\label{ev.param.u}
\partial_{t}u=\big|\xi\big|(\Phi_t -\bar{\phi}_t).
\end{align}
It is easy to see that along the flow \eqref{mvpcf},
in the coordinate system the corresponding evolution equations for $u$ and $\Phi$,
at $(x,t) \in B_r \times J,$
can be expressed as
\begin{equation}\label{paramu2 for PowerF}
\partial_{t}u= F^{\beta -1} g^{ik}\dot{F}_{ij}(\mathcal {W}) D_{k}D_{j} u -\big|\xi\big|\bar{\phi}_t
\end{equation}
and
 \begin{align}\label{locally ev.Phi}
 \partial_{t}\Phi =  F^{\beta -1} g^{ik}\dot{F}_{ij}(\mathcal {W})D_{k}D_{j} \Phi -& F^{\beta -1} g^{ik}\dot{F}_{ij}(\mathcal {W}) \Gamma_{kj}^lD_{jl}\Phi \\
 &+(\Phi_t -\bar{\phi}_t) \, F^{\beta -1} g^{ik} g^{lm}\dot{F}_{ij}(\mathcal {W})h_{il} h_{jm} ,
\nonumber
 \end{align}
respectively.
 Since we have uniform bounds on the curvatures both from above and from below on any finite time interval,
 in view of Corollary \ref{double bound of radius}, Corollary \ref{double bound of dof F},
 equations \eqref{bound on the speed} and  \eqref{bound on the speed bar phi},
 equations \eqref{paramu2 for PowerF} and \eqref{locally ev.Phi} are uniformly parabolic with uniformly bounded coefficients.
Then applying Theorem \ref{krylov-safonov} to  \eqref{locally ev.Phi}
gives that for any $0 <r' < 1$ and $J'\subset J$ there exist some
constants $C_{12}>0$ and $\alpha \in (0,1)$ depending on $n, m,
\beta, M_{0}$ such that
\begin{align}
\label{Phi estimate C alpha}\|\Phi\|_{C^{\alpha}(B_{r'} \times J')} &\leq C_{12},\\
\label{u estimate C alpha}\|u\|_{C^{\alpha}(B_{r'} \times J')} &\leq
C_{12}.
\end{align}
Now for any fixed time $t_{1}\in J'$, we want to follow an argumentation similar to the one used in \cite[Lemma 6.3]{CS10} ( also as in \cite{McC05}).
Then in the case of cancave $F$, the Bellman's extension $\bar{F}$ of $F$
is given by
\[
 \bar{F}(\bar{\lambda}):= \inf_{\lambda\in
\hat{\Gamma}}\left[F\left(\lambda\right)+
DF\left(\lambda\right)\left(\bar{\lambda}-\lambda\right) \right],
\]
for any $\bar{\lambda} \in \Gamma_+$,
while in the case of convex $F$,
 $\bar{F}$ takes the form
\[
 \bar{F}(\bar{\lambda}):= \sup_{\lambda\in
\hat{\Gamma}}\left[F\left(\lambda\right)+
DF\left(\lambda\right)\left(\bar{\lambda}-\lambda\right) \right].
\]
Notice that $F$ is
homogeneous of degree one, the extension simplifies to
\[
 \bar{F}(\bar{\lambda})=\left\{ \begin{aligned}\inf_{\lambda\in \hat{\Gamma}}
DF\left(\lambda\right)\bar{\lambda}\quad \text{for $F$ concave},\\
\sup_{\lambda\in \hat{\Gamma}}
DF\left(\lambda\right)\bar{\lambda}\quad \text{for $F$ convex }.
\end{aligned}
 \right.
\]
Since the Bellman extension is the infimum or the supremum of linear functions,
 it preserves concavity or concavity, by definition and
homogeneity.
Importantly, $ \bar{F}$  coincides with $F$ on $\hat{\Gamma}$
by homogeneity of $F$. Furthermore, using the definition of
$ \bar{F}$ and Corollary \ref{double bound of dof F}, $ \bar{F}$ is
uniformly elliptic, that is
$$
C_5 |\bar{\eta}|\leq  \bar{F}(\bar{\lambda}+\bar{\eta}) -
 \bar{F}(\bar{\lambda})\leq \sqrt{n}C_{5} |\bar{\eta}|, \quad
\mbox{for all}\; \bar{\lambda},\bar{\eta} \in {\mathbb{R}}^{n},\;
\bar{\eta}\geq 0.
$$

Let $V_{t_{1}}: B_{r'}\rightarrow \mathbb{R}$ be given by
\[
V_{t_{1}}(x)=\frac{\partial_{t}u(x,t_{1})}{\sqrt{1+\left|D u(x,t_{1})\right|^{2}}}.
\]
In view of equations \eqref{ev.param.u} and \eqref{Phi estimate C alpha},
we also have
\[
\|V_{t_{1}}\|_{C^{\alpha}(B_{r'})} \leq C_{12}.
\]
Thus, we have the correspoinding elliptic equation for $u$
\[
 \bar{F}^{\beta}(D^2u(x,t_{1}),u(x,t_{1}))=V_{t_{1}}(x)
\]
where
\[
V_{t_{1}}(x)=\left(\Phi(x,t_{1})-\bar{\phi}(t_{1})\right),
\]
satisfies the conditions of Theorem 3 from \cite{Caf89}.
Thus, this theorem gives that $||u||_{C^{2,\alpha}(B_r')} \leq C_{13}.$
Therefore, covering $M_{t}$ by finite many graphs on balls of radius
$r'$ implies that
$$
||u||_{C^{2,\alpha}(M)} \leq C_{13}.
$$
From this estimate on $u(\cdot,t)$ for any fixed $t_{1}$, following the
procedure of
 \cite[Theorem 2.4]{Tsai}
to equation \eqref{ev.param.u}, a $C^{2,\alpha}$ estimate for $u$
with respect to both space and time can be obtained. Once
$C^{2,\alpha}$ regularity is established, combinigng standard parabolic theory with  our short time existence result
yields uniform $C^k$ estimates \eqref{u estimate C k} for any
integer $k > 2$, which implies uniform $C^k$ estimates \eqref{u
estimate C k} for any integer $k\geq 1$ with the fact that $u$
and its first order derivatives are uniformly bounded.
\end{proof}

\begin{theorem} \label{long time exist}
Under the hypotheses of Theorem \ref{main result for powerF flow}, the solution $M_{t}$ of
\eqref{mvpcf} with initial condition $X_0 $, exists, is smooth
and satisfies at every point the initial pinching condition  on $[0, \infty)$.
\end{theorem}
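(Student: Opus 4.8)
The plan is to establish long time existence by the standard continuation argument: the solution is known to exist on a maximal time interval $[0, T_{\max})$, and the goal is to show $T_{\max} = \infty$. Suppose for contradiction that $T_{\max} < \infty$. First I would invoke the a priori estimates accumulated so far: by Theorem \ref{pinching} together with Corollary \ref{positive tildelambda}, the initial pinching condition \eqref{convex pinched} (resp. \eqref{concave pinched}) is preserved on $[0,T_{\max})$, and the solution stays uniformly convex; by Corollary \ref{double bound of radius} the inradius and circumradius are pinched between positive constants $D_1, D_2$ depending only on $n,\beta,V_{n-m}$; by \eqref{bound on the speed}, \eqref{bound on the speed F}, \eqref{bound on the speed bar phi} and \eqref{bound on the speed H} the speed $\Phi(F)$, the global term $\bar\phi$, and the full Weingarten map $\mathscr{W}$ are all uniformly bounded on $[0,T_{\max})$. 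Consequently the flow equation is uniformly parabolic with uniformly bounded coefficients on the whole interval.

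Next I would upgrade these $C^0$--$C^2$ bounds to full $C^\infty$ control via Theorem \ref{Calpha estimate}: for every $t_0 \in (0, T_{\max})$ and every integer $k \geq 1$ one has $\|u\|_{C^k(M \times (t_0, T_{\max}])} \leq C_{14}(n,\beta,M_0,k)$ in the local graph representation of Corollary \ref{graph_u}. Since these bounds are uniform as $t \to T_{\max}$ and independent of the particular $t_0$, the immersions $X(\cdot,t)$ converge in $C^\infty$ to a smooth limiting immersion $X(\cdot, T_{\max})$ as $t \to T_{\max}$, whose image $M_{T_{\max}}$ is a smooth, uniformly convex hypersurface still satisfying the strict pinching inequality (the pinching constant $C^*$ is preserved, and strictness passes to the limit because $K/F^n$, resp.\ $K/H^n$, is bounded below by $C^* > 0$ and $F$, resp.\ $H$, stays bounded above). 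Now I would apply the short time existence result (Theorem in Section \ref{shorttime for the powerF flow}) with initial datum $M_{T_{\max}}$: since $F > 0$ on $M_{T_{\max}}$, the flow can be continued past $T_{\max}$ on some interval $[T_{\max}, T_{\max} + \delta)$, and by uniqueness this extends the original solution, contradicting maximality of $T_{\max}$. Hence $T_{\max} = \infty$.

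The main obstacle—though it is largely already dispatched by the preceding sections—is ensuring that the uniform bounds genuinely persist up to and including the potential blow-up time $T_{\max}$, in particular that $F$ does not degenerate to zero there; this is precisely what Lemma \ref{nonegative of F} and Corollary \ref{positive tildelambda} guarantee, giving $F(\cdot,t) \geq F(\cdot,0)e^{-C_{11}T_{\max}} > 0$ uniformly on $[0,T_{\max})$. A secondary technical point is that all geometric constants in Corollaries \ref{double bound of radius} and \ref{double bound of dof F} and in the speed bounds depend only on $n$, $\beta$, $M_0$ (through the preserved mixed volume $V_{n-m}$ and the initial pinching), not on $T_{\max}$, so the estimates do not deteriorate as $t \to T_{\max}$; this uniformity is what makes the $C^\infty$ convergence and the restart argument work. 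Finally, since the pinching condition is preserved at every step and inherited by $M_{T_{\max}}$, the extended solution continues to satisfy it, so by iterating (or simply because the argument shows no finite $T_{\max}$ exists) the solution is smooth and satisfies the initial pinching condition for all $t \in [0,\infty)$.
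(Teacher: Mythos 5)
Your proposal is correct and takes essentially the same route as the paper: preserved pinching and positivity of $F$ (Theorem \ref{pinching}, Lemma \ref{nonegative of F}, Corollary \ref{positive tildelambda}), the speed and curvature bounds, and the uniform $C^{k}$ graph estimates of Theorem \ref{Calpha estimate} are used to produce a smooth, still strictly pinched limit hypersurface at a hypothetical finite maximal time, after which short time existence restarts the flow and contradicts maximality. The only cosmetic differences are that the paper spells out the $C^{0}$-Cauchy argument from the speed bound, the equivalence of the metrics $g(t)$, and an interpolation step to obtain the smooth limit $M_{T}$, which you assert directly from the uniform bounds, and that your claim that the constants are independent of $T_{\max}$ is not quite accurate (e.g.\ the lower bound on $F$ decays like $e^{-C_{11}T_{\max}}$) but harmless, since in the contradiction argument $T_{\max}$ is a fixed finite time and only finiteness of the bounds on $[0,T_{\max})$ is needed.
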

\begin{proof}
As we know, the preserving pinching condition and smoothness throughout the interval of existence follows form
Theorem \ref{pinching}, and Lemma \ref{nonegative of F}.

On the other hand, it is clear that from the expression \eqref{parametrisation_u}
that all the higher order derivatives of
$ X_t$ are bounded if and only if the corresponding derivatives
of $u$ are bounded. Thus, uniform $C^k$ estimate \eqref{u estimate C k} of $u$ implies that all the derivatives of $ X_t$ are
also uniformly bounded.

It remains to show that the interval of existence is infinite.
Suppose to the contrary there is a maximal finite time $T$ beyond
which the solution cannot be extended.
Then the evolution equation \eqref{mvpcf} implies that
\[
\|\mathrm{X(p,\sigma)}-\mathrm{X(p,\tau)}\|_{C^{0}(X_{0})}\leq
\int_{\tau}^{\sigma}\big| \bar{\phi} - \Phi\big|\left(p,t\right)\dif t
\]
for $0\leq \tau \leq \sigma < T$. By \eqref{bound on the speed} and
\eqref{bound on the speed bar phi}, $\mathrm{X}(\cdot,t)$ tends to a
unique continuous limit $\mathrm{X}(\cdot,T)$ as $t\rightarrow T$.
In order to conclude that $\mathrm{X}(\cdot,T)$ represents a
hypersurface $M_{T}$, next under this assumption and in view of the
evolution equation \eqref {evmetric for the powerF} the induced metric $g$ remains
comparable to a fix smooth metric $\tilde{g}$ on $M^{n}$:
\[
\left|\frac{\partial}{\partial
t}\left(\frac{g(u,u)}{\tilde{g}(u,u)}\right)\right|
=\left|\frac{\partial_{t}
g(u,u)}{g(u,u)}\frac{g(u,u)}{\tilde{g}(u,u)}\right| \leq
2|\Phi(x,t_{1})-\bar{\phi}||A|_{g}\frac{g(u,u)}{\tilde{g}(u,u)},
\]
for any non-zero vector $u\in T M^{n}$,  so that ratio of lengths is
controlled above and below by exponential functions of time, and
hence since the time interval is bounded, there exists a positive
constant $C$ such that
\[
\frac{1}{C}\tilde{g}\leq g\leq C\tilde{g}.
\]
Then the metrics $g(t)$ for all different times are equivalent, and
they converge as $t\rightarrow T$ uniformly to a positive definite
metric tensor $g(T)$ which is continuous and also equivalent by
following Hamilton's ideas in \cite{Ham82}. Therefore using the
smoothness of the hypersurfaces $M_{t}$ and interpolation,
\allowbreak
\begin{align*}
&\|\mathrm{X(p,\sigma)}-\mathrm{X(p,\tau)}\|_{C^{k}(X_{0})} \\
&\leq C
\|\mathrm{X(p,\sigma)}-\mathrm{X(p,\tau)}\|^{1/2}_{C^{0}(X_{0})}
\|\mathrm{X(p,\sigma)}-\mathrm{X(p,\tau)}\|^{1/2}_{C^{2k}(X_{0})}\\
&\leq C |\sigma - \tau|^{1/2},
\end{align*}
so the sequence $\{\mathrm{X(t)}\}$ is a Cauchy sequence in
$C^{k}(X_{0})$ for any $k$. Therefore $M_{t}$ converges to a smooth
limit hypersurface $M_{T}$ which must be a compact embedded
hypersurface in $\mathbb{R}^{n+1}$. Finally, applying the
local existence result, the solution $M_{T}$ can be extended for a
short time beyond $T$, since there is a solution with initial
condition $M_{T}$, contradicting the maximality of $T$. This
completes the proof of Theorem \ref{long time exist}.
\end{proof}

\section{\bf Exponential convergence to the sphere}\label{convergence for the powerF flow}
Observe that, to finish the proof of Theorem \ref{main result for powerF flow}, it
remains to deal with the issues related to the convergence of the
flow \eqref{mvpcf}: It is proved that solutions of equation
\eqref{mvpcf} with initial conditions converge, exponentially in the $C^{\infty}$-topology,
to a sphere in $\mathbb{R}^{n+1}$ as $t$ approaches infinity, whether $F$ be convex or concave.

Since our previous results depend on the lower bound for $F$ in time $T$,
we know that the solution is smooth and uniformly convex on any finite time interval,
but we cannot completely control its behavior as $t \to \infty$.
So at this stage we cannot exclude that $\min_{M_t} F \to 0$ as $t \to \infty$ so that uniform convexity is lost.
Since the regularity estimates depend on uniform convexity,
some additional argument is needed to ensure the existence of a smooth limit as $t \to \infty$.

We first want to show that, if a smooth limit exists,
the solution of \eqref{mvpcf} has to be a round sphere;
the existence of the limit will be proved afterwards.
To address the first step,
as in Theorem \ref{pinching} we consider the quotient $Q_{2}$, that is $K/F^n$, for convex $F$ and
the quotient $Q_{1}$, that is  $K/H^n$, for concave $F$.

To understand the behaviour of the quotients $Q_{i}$, $i=1,2$, from
their evolution equations \eqref{evtildeq} and \eqref{evtildeq2}
 by application of the maximum principle,
previously we need to check that the global term $ \bar{\phi}(t)$
has the right sign.

\begin {lemma}\label{a key estimate2}
There exists a constant $\bar{\phi}_0 =\bar{\phi}_0(n, m, \beta,) > 0$ such that $\bar{\phi}(t) \geq \bar{\phi}_0$ for all $t \geq 0$,
whether $F$ be convex or concave.
\end {lemma}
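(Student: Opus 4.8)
Write the global term as
$\bar{\phi}(t)=\bar{\phi}_m(t)=\dfrac{\int_{M_t}E_{m+1}F^{\beta}\,\dif\mu_t}{\int_{M_t}E_{m+1}\,\dif\mu_t}$
as in \eqref{barPhi}. By Theorem~\ref{long time exist} the hypersurface $M_t$ is strictly convex for every $t\geq0$, so $E_{m+1}>0$ and $\dif\nu_t:=E_{m+1}\,\dif\mu_t\,\big/\int_{M_t}E_{m+1}\,\dif\mu_t$ is a probability measure on $M_t$. Since $\beta\geq1$, the map $s\mapsto s^{\beta}$ is convex, so Jensen's inequality gives
\[
\bar{\phi}(t)=\int_{M_t}F^{\beta}\,\dif\nu_t\ \geq\ \Bigl(\int_{M_t}F\,\dif\nu_t\Bigr)^{\beta}
=\left(\frac{\int_{M_t}E_{m+1}F\,\dif\mu_t}{\int_{M_t}E_{m+1}\,\dif\mu_t}\right)^{\beta}.
\]
It therefore suffices to bound the bracketed ratio from below by a positive constant that is independent of $t$.

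For the numerator one invokes the preserved pinching \eqref{tildle lambda pinching}, which gives $\lambda_i\geq\varepsilon_0H$ on $M_t$ for every $i$ in both the convex and the concave case; since $f$ is monotone, homogeneous of degree one and normalised (Conditions~\ref{Fconds}\,\ref{item2}--\ref{item4}), this forces $F=f(\lambda)\geq f(\varepsilon_0H,\dots,\varepsilon_0H)=\varepsilon_0H$, whence
\[
\int_{M_t}E_{m+1}F\,\dif\mu_t\ \geq\ \varepsilon_0\int_{M_t}E_{m+1}H\,\dif\mu_t\ =\ \varepsilon_0\int_{M_t}E_{m+1}E_1\,\dif\mu_t.
\]
The pointwise Newton--Maclaurin inequality $E_1E_{m+1}\geq c(n,m)\,E_{m+2}$ (valid on $\Gamma_+$, hence on the convex $M_t$) then bounds the last integral below by a constant multiple of $\int_{M_t}E_{m+2}\,\dif\mu_t$; the two endpoint values $m=-1$ (where already $E_0E_1=E_1$) and $m=n-1$ (where $E_{m+2}$ is undefined, and one instead uses the fixed Gauss--Bonnet value $\int_{M_t}K\,\dif\mu_t=(n+1)\omega_{n+1}$ together with $KH\geq nK^{(n+1)/n}$ and Jensen for $s\mapsto s^{(n+1)/n}$) must be handled separately.

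Finally, all the integrals $\int_{M_t}E_k\,\dif\mu_t$, $0\leq k\leq n$, stay between two positive constants: by \eqref{def mixedvolume} each is a fixed multiple of the mixed volume $V_{n-k}(\Omega_t)$, and from $B_{\rho_-(t)}\subseteq\Omega_t\subseteq B_{\rho_+(t)}$, the uniform radius bounds $D_1\leq\rho_-(t)\leq\rho_+(t)\leq D_2$ of Corollary~\ref{double bound of radius}, and the monotonicity of mixed volumes under inclusion one obtains $\omega_{n+1}D_1^{\,j}\leq V_j(\Omega_t)\leq\omega_{n+1}D_2^{\,j}$. Feeding the resulting lower bound for $\int_{M_t}E_{m+2}\,\dif\mu_t$ (or the Gauss--Bonnet constant, for $m=n-1$) and the upper bound for $\int_{M_t}E_{m+1}\,\dif\mu_t$ into the displayed estimate yields $\bar{\phi}(t)\geq\bar{\phi}_0>0$ with $\bar{\phi}_0$ depending only on $n$, $m$, $\beta$ and the conserved quantity $V_{n-m}(\Omega_0)$, in both the convex and the concave case. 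The step requiring most care is the reduction $\int_{M_t}E_{m+1}H\,\dif\mu_t\geq c\int_{M_t}E_{m+2}\,\dif\mu_t$ together with its two endpoint exceptions; everything else follows directly from the pinching estimate and the radius bounds already established.
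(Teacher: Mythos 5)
Your argument is correct, but it takes a genuinely different route from the paper. The paper proves the bound in essentially one step: using the pinching \eqref{tildle lambda pinching} and Lemma \ref{property Em}\,iii) it compares $\Phi(F)=F^{\beta}$ pointwise with $E_{m+1}^{\beta/(m+1)}$, applies H\"older's inequality with exponent $\tfrac{\beta+m+1}{m+1}$ to $\int_{M_t}E_{m+1}\,\dif\mu_t$, and obtains directly $\bar{\phi}(t)\geq \varepsilon_0^{\beta}\bigl(V_{n-m-1}/V_{n}\bigr)^{\beta/(m+1)}$, which is then bounded below by the radius estimates of Corollary \ref{double bound of radius}. You instead strip off the homogeneity $\beta$ first by Jensen (convexity of $s\mapsto s^{\beta}$ for $\beta\geq1$ with the probability measure weighted by $E_{m+1}$), then use the pointwise bound $F\geq\varepsilon_0 H$ (which follows from \eqref{tildle lambda pinching} together with monotonicity, homogeneity and normalisation of $f$, exactly as the paper argues $f(\lambda)\geq\lambda_1$ in Lemma \ref{a key estimate}), and finally the Newton--Maclaurin inequality $E_1E_{m+1}\geq c(n,m)E_{m+2}$ on $\Gamma_+$, arriving at $\bar{\phi}\geq c\,(V_{n-m-2}/V_{n-m-1})^{\beta}$ instead of the paper's ratio $V_{n-m-1}/V_{n}$; both conclude with Corollary \ref{double bound of radius} (you additionally invoke monotonicity of mixed volumes under inclusion, which is standard for convex bodies and consistent with Lemma \ref{radius estimates}). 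The price of your decomposition is the need for the two endpoint cases, and your treatments are sound: $m=-1$ reduces to $\int E_1\,\dif\mu_t\simeq V_{n-1}$, and $m=n-1$ works via $\int_{M_t}K\,\dif\mu_t=(n+1)\omega_{n+1}$, $H\geq nK^{1/n}$ and Jensen for $s^{(n+1)/n}$. The gain is that your argument covers $m=-1$ cleanly, whereas the paper's H\"older exponent $\tfrac{\beta+m+1}{m+1}$ implicitly requires $m\geq0$; the paper's approach, in turn, avoids the Newton--Maclaurin step and the case distinctions altogether.
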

\begin{proof}
In view of estimates \eqref{tildle lambda pinching}, we can use Lemma \ref{property Em} $iii)$ to estimate
\begin{align*}
\left(\,\int_M E_{m+1} \dif \mu_{t}\right) ^{\frac{\beta+m+1}{m+1}}& \leq |M_t|^{\frac{\beta}{m +1}}\int_M \left(E_{m+1}\right) ^{\frac{\beta+m+1}{m+1}} \, \dif \mu_{t}\\
&{\leq}
 |M_t|^{\frac{\beta}{m +1}} \int_M \left(n H\right)^\beta E_{m+1}\, \dif \mu_{t} \\
 &\! \!\os{\eqref{tildle lambda pinching}}{\leq} \varepsilon_{0}^{-\beta} |M_t|^{\frac{\beta}{m +1}} \int_M \left(\lambda_{1}\right)^\beta E_{m+1}\, \dif \mu_{t} \\
&\leq
 \varepsilon_{0}^{-\beta}  |M_t|^{\frac{\beta}{m +1}} \int_M \Phi(F) E_{m+1}\, \dif \mu_{t}
\end{align*}
where we have applied a H\"older inequality in the first inequality.
Hence
\begin{align} \label{estimate_phi below}
\bar{\phi}(t)=  &\frac{\int_M \Phi(F) E_{m+1} \dif \mu_{t}}{\int_M E_{m+1} \dif \mu_{t}} \notag\\
 &\geq \varepsilon_{0}^{\beta} \left(\frac{\int_M E_{m+1} \dif \mu_{t}}{|M_t|}\right)^{\frac{ \beta}{m +1}}
=\left(\frac{ V_{n-m-1}}{ V_{n}}\right)^{\frac{ \beta}{m +1}}.
\end{align}

Now, the fact the mixed volume $V_{n-m}$ is fixed along the flow and Corollary \ref{double bound of radius} implies that the area $V_{n}$ of $M_t$ is not greater than the area of a sphere of radius $c_2$, while the mixed volume $V_{n-m-1}$ of $M_t$ is greater than the $V_{n-m-1}$ of a sphere of radius $D_1$. The use of this in the expression on the right hand side of \eqref{estimate_phi below} gives the desired lower bound for $\bar{\phi}$.
\end{proof}

Now we define $F_{\min}(t) = \min_M F(\, \cdot \,, t)$.
In the next lemma we will show that $F_{\min}$ does not decay too fast, and this will be enough for our purposes.
\begin {proposition} \label{integral min F}
We have
\begin{equation*}
\int_{0}^{\infty} F_{\min}(t) \dif t =+\infty,
\end{equation*}
whether $F$ be convex or concave.
\end {proposition}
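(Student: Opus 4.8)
The plan is to bound from below the rate at which the mixed volume $V_n$ (the enclosed area, i.e.\ $\int_{M_t}\dif\mu_t$ up to a fixed constant) can grow, and then to deduce a differential inequality of the form $\frac{\dif}{\dif t}V_n(t)\leq C\, F_{\min}(t)$ (possibly after also using a lower bound on $V_n$ and the fixed value of $V_{n-m}$). Since $V_n(t)$ is bounded above for all times by Corollary \ref{double bound of radius}, integrating this inequality from $0$ to $\infty$ forces $\int_0^\infty F_{\min}(t)\,\dif t<\infty$ to be impossible unless $V_n$ were actually decreasing --- so I must be careful about the sign. More precisely, I would start from the evolution of the area form, $\partial_t(\dif\mu_t)=(\bar\phi-\Phi)H\,\dif\mu_t$, giving
\begin{equation*}
\frac{\dif}{\dif t}\int_{M_t}\dif\mu_t=\int_{M_t}(\bar\phi-\Phi)H\,\dif\mu_t .
\end{equation*}
Using $\Phi=F^\beta\geq 0$ and $H\geq 0$ (convexity, from Theorem \ref{long time exist}) the negative part is harmless, so $\frac{\dif}{\dif t}\int_{M_t}\dif\mu_t\leq\bar\phi(t)\int_{M_t}H\,\dif\mu_t$. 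Now $\bar\phi(t)\leq C_6$ by \eqref{bound on the speed bar phi} and $\int_{M_t}H\,\dif\mu_t$ is a mixed volume (up to a constant), hence uniformly bounded by Corollary \ref{double bound of radius}; that only gives boundedness of the derivative, not what I want. So instead I would track a quantity that is \emph{monotone and bounded}: the argument should compare the growth of $V_n$ against $\bar\phi$ and use that $\bar\phi$ is, up to constants, comparable to $F$ evaluated at the ``average'' curvature.

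The cleaner route, which I expect to be the intended one, is the following. By Lemma \ref{a key estimate2} (the just-proved lower bound on $\bar\phi$), $\bar\phi(t)\geq\bar\phi_0>0$; and by definition $\bar\phi(t)=\frac{\int_M F^\beta E_{m+1}\,\dif\mu_t}{\int_M E_{m+1}\,\dif\mu_t}\leq F_{\max}(t)^\beta$, so in particular $F_{\max}(t)\geq\bar\phi_0^{1/\beta}>0$. That bounds $F$ from below \emph{somewhere}, but I need control of $F_{\min}$. Here I would use the curvature pinching \eqref{pointwise pinching}, which says $\lambda_n\leq C_3\lambda_1$ everywhere, together with homogeneity of degree $\beta$ of $\Phi$: pinching makes all the $\lambda_i$ comparable to each other, hence $F(p,t)$ is comparable (with constants depending only on $n,\beta$) to $F$ at any other point of $M_t$ at the same time, \emph{and} to $\bar\phi(t)^{1/\beta}$. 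Concretely, $\lambda_i\geq\varepsilon_0 nF$ and $\lambda_i\leq H\leq nF$ (convex case) or the analogous concave-case estimates \eqref{tildle lambda pinching}, so $F_{\min}(t)\geq c\, F_{\max}(t)$ for a constant $c=c(n,\beta)>0$; but actually what I want is to relate $F_{\min}$ to the motion of the hypersurface.

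So the final structure I propose: (1) Show $\frac{\dif}{\dif t}\log\big(\text{diam}\,\Omega_t\big)$ or $\frac{\dif}{\dif t}\rho_+(t)$ is controlled. Differentiating the support function evolution \eqref{ev support for the powerF} and using convexity, one gets $\frac{\dif}{\dif t}\rho_+(t)\geq -C\,F_{\max}(t)$ near the point realizing the circumradius (or an integrated version via the inradius/circumradius comparison). Combined with the uniform bound $D_1\leq\rho_-\leq\rho_+\leq D_2$ from Corollary \ref{double bound of radius}, any quantity of this kind that is bounded above \emph{and below} and whose time-derivative is $\geq -C F_{\max}$ yields $\int_0^\infty F_{\max}(t)\,\dif t=\infty$ only if it oscillates --- which is exactly the contradiction I want to avoid; hence the correct monotone object must be one whose derivative is $\leq C F_{\min}$ from one side. (2) Replace $F_{\max}$ by $F_{\min}$ using the pinching-induced comparability $F_{\max}(t)\leq C(n,\beta)F_{\min}(t)$ noted above. (3) Conclude: if $\int_0^\infty F_{\min}(t)\,\dif t<\infty$ then the hypersurface would be a Cauchy family in $C^0$ converging to a fixed convex body, but combined with the preserved pinching and the constancy of $V_{n-m}$ this is not yet a contradiction by itself --- the real point is that $\bar\phi(t)\geq\bar\phi_0>0$ forces the \emph{center of mass or the mean radius to move at a definite rate}, and over infinite time a bounded region cannot absorb infinitely much displacement; making this precise via $\int_0^\infty(\Phi-\bar\phi)\,$ along normal directions and using $|\bar\phi-\Phi|\geq\bar\phi_0-\Phi$ gives the divergence of $\int_0^\infty F_{\min}$.

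The main obstacle I anticipate is step (1)/(3): extracting a genuinely monotone bounded scalar from the flow whose one-sided time-derivative is bounded by $\pm C\,F_{\min}(t)$. Bounds like the diameter or the support function are controlled above and below but need not be monotone, so a naive integration argument does not immediately close; one must instead exploit that $\bar\phi(t)$ is uniformly bounded \emph{below} (Lemma \ref{a key estimate2}), which means the enclosed region is being ``pushed outward'' at a uniform rate by the global term while $\Phi=F^\beta\geq 0$ pushes it inward by at most $\sim F_{\max}^{}\leq C F_{\min}^{}$; then some monotone functional such as $t\mapsto\rho_-(t)+\int_0^t C\,F_{\min}(s)\,\dif s$ (or the averaged radius plus a controlled correction) is genuinely nondecreasing, and its boundedness above by $D_2+\int_0^\infty C F_{\min}$ forces $\int_0^\infty F_{\min}=\infty$ once one checks the net motion over $[0,\infty)$ is infinite. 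I would carry this out by writing, for a fixed point $x_0$ inside all $\Omega_t$ (possible by Lemma \ref{t0+smalltime} and Corollary \ref{double bound of radius}), the evolution of $\min_{M_t}\langle\mathfrak X,\nu\rangle$ via \eqref{ev support for the powerF}, dropping the favorable elliptic and zeroth-order-with-good-sign terms, and using $\bar\phi-(\beta+1)\Phi\geq\bar\phi_0-(\beta+1)F_{\max}^\beta\geq\bar\phi_0-C F_{\min}^\beta$; integrating in $t$ and invoking the two-sided radius bound yields the claimed divergence.
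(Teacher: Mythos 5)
There is a genuine gap in your plan, and it sits at the step you lean on twice: the claim that the pointwise pinching \eqref{pointwise pinching} makes $F$ at one point of $M_t$ comparable to $F$ at any other point, so that $F_{\max}(t)\leq C(n,\beta)F_{\min}(t)$. Pinching only compares the principal curvatures \emph{at the same point} ($\lambda_n(p)\leq C_3\lambda_1(p)$, $\lambda_i(p)\geq \varepsilon_0 H(p)$, etc.); it gives no control of $F(p,t)$ in terms of $F(q,t)$ for $q\neq p$, and no lemma in the paper supplies such a global Harnack-type comparability. Without it, your final mechanism collapses: in the support-function inequality you want $\bar{\phi}-(\beta+1)\Phi\geq \bar{\phi}_0-CF_{\min}^\beta$ at the point realizing the minimum, but at that point $\Phi$ is only bounded by the global bound $C_7$ of \eqref{bound on the speed}, which is not integrable-small under the hypothesis $\int_0^\infty F_{\min}\,\dif t<\infty$, so no contradiction with the two-sided radius bounds is produced. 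Your own text flags this (``not yet a contradiction by itself''), and the remaining steps (monotone functional $\rho_-+\int CF_{\min}$, ``bounded region cannot absorb infinite displacement'') are not carried out, so the argument does not close.

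The paper's proof is much more direct and avoids any global comparison of $F$ across points: it works with $\mathscr{P}(t)=\min_M\Phi(\cdot,t)$ and the evolution inequality \eqref{estimate of Phi}, estimating at a point (using \eqref{bound on the speed bar phi}, \eqref{tildle lambda pinching} and \eqref{pointwise pinching}) that $\sum_i\dot{\Phi}^i\lambda_i^2\leq C_3\beta\,\Phi^{1+\frac1\beta}$, so that $\left(\partial_t-\Delta_{\dot\Phi}\right)\Phi\geq-\tilde{C}\,\Phi^{1+\frac1\beta}$. The maximum principle then compares $\mathscr{P}$ with the Riccati-type ODE solution, giving $\mathscr{P}(t)\geq\bigl(\mathscr{P}(0)^{-1/\beta}+\tilde{C}t/\beta\bigr)^{-\beta}$, hence $F_{\min}(t)\geq\bigl(\mathscr{P}(0)^{-1/\beta}+\tilde{C}t/\beta\bigr)^{-1}$, whose integral diverges logarithmically. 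If you want to salvage your route you would either have to prove a genuine global curvature comparison (not available here before convergence is known) or switch to this pointwise maximum-principle decay estimate, which is exactly the missing quantitative statement that ``$F_{\min}$ does not decay too fast.''
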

\begin{proof}
Let us first estimate  $\mathscr{P}(t) := \min_M \Phi(\, \cdot \,, t)$.
Since our evolving manifolds are convex,
equation \eqref{estimate of Phi} gives
\begin{align*}
\left(\partial_{t}-\Delta_{\dot \Phi} \right)\Phi  &\geq   - \bar{\phi} \sum_{i=1}^n \dot{\Phi}^i \lambda_i^2
\os{\eqref{bound on the speed bar phi}}{\geq} - C_{7} \sum_{i=1}^n \dot{\Phi}^i \lambda_i^2\\
&\os{\eqref{tildle lambda pinching}}{\geq} - C_{7}\lambda_n\sum_{i=1}^n \dot{\Phi}^i \lambda_i
\os{\eqref{pointwise pinching}}{\geq} - C_{7}C_{3}F\sum_{i=1}^n \dot{\Phi}^i \lambda_i\\
&= - C_{7}C_{3}\beta{\Phi}^{1 + \frac{1}{\beta}}=:-\tilde{C}{\Phi}^{1 + \frac{1}{\beta}},
\end{align*}
where we have used homogeneity of $ \Phi$ and normalisation of $F$.
Therefore by applying the maximum principle to the above inequality we have
$$\mathscr{P} \geq  \left(\mathscr{P}(0)^{-\frac{1}{\beta}} + \tilde {C} \, \frac{t}{\beta}\right)^{-\beta},$$
from which it follows that
 $$\int_0^\infty F_{\min} (t) \, \dif t = \lim_{s \to \infty} \int_0^s n \,\mathscr{P}^{\frac{1}{\beta}} (t) \, \dif t
 \geq \lim_{s \to \infty}  \int_0^s n \left(\mathscr{P}(0)^{-\frac{1}{\beta}} + \tilde {C} \, \frac{t}{\beta}\right)^{-1} = \infty.$$
\end{proof}
\begin {remark} \label{integral min H} Since $H\geq n F$ in the case of concave $F$,  we have
\begin{equation*}
\int_{0}^{\infty} H_{\min}(t) \dif t =+\infty.
\end{equation*}
\end {remark}

The following result shows that if the limiting hypersurfaces exists, it has to be a round sphere.
\begin{proposition} \label{the shape of M}
The shape of $M_{t}$ approaches the shape of a round sphere as $t\rightarrow \infty$.
\end{proposition}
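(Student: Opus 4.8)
The plan is to run a maximum–principle argument on the pinching quotient, of the same type used to prove Theorem~\ref{pinching} but now retaining the zero–order reaction term. Write $Q=Q_2=K/F^n$ when $F$ is convex and $Q=Q_1=K/H^n$ when $F$ is concave, and let $q_\star$ denote the value of $Q$ on a round sphere, so $q_\star=1$ in the first case and $q_\star=1/n^n$ in the second; observe that $Q\le q_\star$ everywhere, since $K\le(H/n)^n$ by the arithmetic--geometric mean inequality while $F\ge H/n$ for convex $F$ by Lemma~\ref{FHInequalities}. By Theorem~\ref{pinching} the function $\mathcal Q(t):=\min_{M_t}Q$ is nondecreasing and bounded above by $q_\star$, hence converges to some $\mathcal Q_\infty\le q_\star$, and by the initial pinching $\mathcal Q(t)\ge C^{*}>0$ for all $t$. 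The heart of the matter is to prove $\mathcal Q_\infty=q_\star$.

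I would argue by contradiction: assume $\mathcal Q_\infty<q_\star$. Applying the maximum principle to \eqref{evtildeq2} (resp. \eqref{evtildeq}) at a point realizing $\min_{M_t}Q$, the first–order terms vanish, $\Delta_{\dot\Phi}Q\ge0$, and the second–order terms have favorable sign: the term $|F\nabla\mathscr{W}-\mathscr{W}\nabla F|^{2}_{\dot\Phi,b}$ (resp. $|H\nabla\mathscr{W}-\mathscr{W}\nabla H|^{2}_{\dot\Phi,b}$) dominates the $\ddot\Phi(\nabla\mathscr{W},\nabla\mathscr{W})$–contribution exactly as in the proof of Theorem~\ref{pinching}, using Lemma~\ref{a key estimate} (resp. its concave counterpart) together with the pinching \eqref{tildle lambda pinching} and the constant $\varepsilon_0$, while for convex $F$ the extra term $n\tfrac{Q_2}{F}\ddot F(\nabla\mathscr{W},\nabla\mathscr{W})$ is nonnegative. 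This leaves
\[
\frac{d}{dt}\mathcal Q(t)\ \ge\ \big[(\beta-1)\Phi+\bar{\phi}\big]\,\frac{\mathcal Q}{F}\,\big(n\tr_{\dot F}(A\mathscr{W})-HF\big)
\]
for convex $F$, and the same with $H$ replacing $F$ and $n|A|^{2}-H^{2}$ replacing $n\tr_{\dot F}(A\mathscr{W})-HF$ for concave $F$, the right–hand side evaluated at a minimizing point. For concave $F$, Lemma~\ref{important inequality} gives $n|A|^{2}-H^{2}\ge\delta H^{2}(1/n^{n}-Q_1)$; for convex $F$, I would prove a quantitative version of Lemma~\ref{FHInequalities2}, namely $n\tr_{\dot F}(A\mathscr{W})-HF\ge c_0 H^{2}(1-Q_2)$ for some $c_0=c_0(n,\varepsilon_0)>0$, by a compactness argument on the pinched cone $\{\lambda:\lambda_i\ge\varepsilon_0 H\}$ entirely analogous to the one behind Lemma~\ref{important inequality} (both sides being continuous and homogeneous, both vanishing precisely where all $\lambda_i$ coincide, and both vanishing to second order there). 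Using in addition $q_\star-\mathcal Q\ge q_\star-\mathcal Q_\infty>0$, the bound $\bar{\phi}(t)\ge\bar{\phi}_0>0$ established above, $\mathcal Q(t)\ge C^{*}$, and $H^{2}/F\ge H\ge F_{\min}(t)$ for convex $F$ (resp. $H\ge nF\ge nF_{\min}(t)$ for concave $F$), we obtain, for every $t$,
\[
\frac{d}{dt}\mathcal Q(t)\ \ge\ c_1\,(q_\star-\mathcal Q_\infty)\,F_{\min}(t),\qquad c_1>0\ \text{independent of }t .
\]
Integrating and invoking $\int_0^\infty F_{\min}(t)\,\dif t=+\infty$ from Proposition~\ref{integral min F} (Remark~\ref{integral min H} for concave $F$) forces $\mathcal Q(t)\to+\infty$, contradicting $\mathcal Q\le q_\star$. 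Therefore $\mathcal Q_\infty=q_\star$.

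Since $Q\le q_\star$ everywhere and $\min_{M_t}Q\to q_\star$, we get $Q\to q_\star$ uniformly on $M_t$. By compactness of $\{\lambda:\lambda_i\ge\varepsilon_0 H,\ |\lambda|=1\}$ together with the fact that on this set $Q$ attains $q_\star$ only at the vector with all entries equal (by Maclaurin's inequality, with equality iff all $\lambda_i$ coincide), uniform convergence of $Q$ to $q_\star$ forces $\lambda_n(p,t)/\lambda_1(p,t)\to1$ uniformly in $p$ as $t\to\infty$; equivalently, the principal curvatures become asymptotically all equal. Combining this with the circumradius/inradius estimates of Lemma~\ref{pinched curvatures estimates} and Lemma~\ref{radius estimates}, the uniform radius bounds of Corollary~\ref{double bound of radius} and the preservation of $V_{n-m}$ (Lemma~\ref{mixed volume preservation}), the hypersurfaces $M_t$ approach (after recentring) round spheres of the radius determined by $V_{n-m}$, i.e. the shape of $M_t$ approaches that of a round sphere; the existence of an actual smooth limit is established afterwards. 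The steps I expect to demand most care are the domination of the $\ddot\Phi(\nabla\mathscr{W},\nabla\mathscr{W})$ term at the minimum of $Q$ — which, however, is already carried out in the proof of Theorem~\ref{pinching} — and the quantitative strengthening of Lemma~\ref{FHInequalities2} for convex $F$; the remaining ingredients are either proved earlier in the paper or follow from the compactness principle just described.
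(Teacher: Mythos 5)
Your argument is correct in outline, but it is a genuinely different route from the paper's. The paper proves Proposition \ref{the shape of M} by a \emph{rigidity} argument with the strong maximum principle: if the minimum of $Q_2$ (resp.\ $Q_1$) were attained anew at some $(p_0,t_0)$ with $t_0>0$, then $Q$ would be identically constant, so every term on the right of \eqref{evtildeq2} (resp.\ \eqref{evtildeq}) must vanish; in particular $n\tr_{\dot F}(A\mathscr{W})-HF\equiv 0$, which via the decomposition $\sum_{i\neq j}(\dot f_i\lambda_i-\dot f_j\lambda_j)(\lambda_i-\lambda_j)$ and Lemma \ref{relation of dotF and dotf} forces $\lambda_i=\lambda_j$ everywhere, i.e.\ a round sphere. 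You instead keep the zero-order reaction term, derive a differential inequality $\frac{d}{dt}\min_{M_t}Q\geq c_1(q_\star-\mathcal Q_\infty)F_{\min}(t)$ under the assumption $\mathcal Q_\infty<q_\star$, and integrate against $\int_0^\infty F_{\min}\,\dif t=+\infty$ (Proposition \ref{integral min F}, Remark \ref{integral min H}) to force $\min_{M_t}Q\to q_\star$; this is essentially the quantitative scheme of Cabezas-Rivas--Sinestrari. What your approach buys is a stronger, quantitative conclusion: uniform convergence of the pinching quotient to its spherical value, hence $\lambda_n/\lambda_1\to 1$, without presupposing the existence of a smooth limit hypersurface (the paper's rigidity statement only identifies the shape once a limit is in hand, and the genuinely quantitative roundness appears only later via $f=1/n^n-K/H^n$). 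What it costs is one extra ingredient in the convex case: the quantitative strengthening of Lemma \ref{FHInequalities2}, namely $n\tr_{\dot F}(A\mathscr{W})-HF\geq c_0H^2(1-Q_2)$ on the pinched cone, which the paper never needs; your compactness-plus-Taylor sketch for it is sound (both sides are homogeneous of degree two, vanish only on the umbilic ray, and there $n\tr_{\dot F}(A\mathscr{W})-HF\geq\sum_{i\neq j}\dot f_j(\lambda_i-\lambda_j)^2$ gives a nondegenerate quadratic lower bound), but it must actually be carried out, in analogy with Lemma \ref{important inequality}. One small slip: for convex $F$ the inequality $H^2/F\geq H$ presumes $F\leq H$, which is not what Lemma \ref{FHInequalities} gives (it gives $F\geq H/n$); however, the preserved pinching \eqref{tildle lambda pinching} yields $H\geq \varepsilon_0 n^2F$, hence $H^2/F\geq \varepsilon_0 n^2 H\geq \varepsilon_0 n^2 F_{\min}$, which is all your integration step needs, so the constant changes but the argument survives. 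Finally, note that your use of Proposition \ref{integral min F} precedes Proposition \ref{the shape of M} in the paper's logical order, so no circularity is introduced with the later lower speed bound that cites this proposition.
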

\begin{proof}
In the case of convex $F$, in the proof of Theorem \ref{pinching},
by applying the weak maximum principle to  \eqref{evtildeq estimate} for
$\mathcal {Q}(t)=\min_{M_{t}} \left(\frac{K}{F^n} \right)$, we have
$\mathcal {Q}(t) \geq \mathcal {Q}(0)$ .
Now to analysis the shape of $M_{t}$ as $t\rightarrow \infty$,
we apply the maximum principle to the evolution equation \eqref{evtildeq2}.
Suppose $Q_{2}$ attained a new minimum at some
$\left(p_{0}, t_{0}\right)$, $ t_{0}>0$.
the strong maximum principle then implies that $Q_{2}$ is identically constant.
If this is the case, from equation \eqref{evtildeq2} we must have
\begin{align*}
0\equiv &\frac{1}{ F^{2}}\left| F\nabla\mathscr{W}
-\mathscr{W}\nabla F\right|^{2}_{\dot \Phi, b}
+ \tr_{ b-\frac{n}{ F}\dot F}\left(
\ddot{\Phi}(\nabla\mathscr{W} ,\nabla \mathscr{W})\right)\\
 &+n\frac{ 1}{F}\ddot{F}(\nabla \mathscr{W} ,\nabla\mathscr{W})
+\big[( \beta-1) \Phi+\bar{\phi} \big]
\frac{ 1}{ F}\left(n\tr_{\dot F}( A\mathscr{W} ) - H F\right).
\end{align*}

Since each of the sum of the first two terms, the third term and the last term are nonnegative,
each must be identically equal to zero.
Thus, from the fact that $\bar{\phi}$ has a positive lower bound, $M_{t}$ is convex and $\beta \geq 1$,
the last expression in brackets must equal to zero, this means that at any point of $M_{t}$,
\begin{align*}
0&\equiv n\tr_{\dot F}( A\mathscr{W} ) - H F\\
&\qquad= \sum_{ij}\left(\dot{f}_{i}\lambda_{i}^{2}-\lambda_{i}\dot{f}_{j}\lambda_{j}\right)
=\sum_{i\neq j}\left(\dot{f}_{i}\lambda_{i}-\dot{f}_{j}\lambda_{j}\right)
\left(\lambda_{i}-\lambda_{j}\right).
\end{align*}
Now for any pair $i\neq j$ with $\lambda_{i}\neq \lambda_{j}$,
Lemma \ref{relation of dotF and dotf} implies
\[\left(\dot{f}_{i}\lambda_{i}-\dot{f}_{j}\lambda_{j}\right)
\left(\lambda_{i}-\lambda_{j}\right)\neq 0.\]
Thus,
\[\sum_{i\neq j}\left(\dot{f}_{i}\lambda_{i}-\dot{f}_{j}\lambda_{j}\right)
\left(\lambda_{i}-\lambda_{j}\right)\neq 0.\]
So we must have $\lambda_{i}=\lambda_{j}$ for all $i$ and $j$.
Thus,$M_{t_{0}}$ is umbilical everywhere, and therefore is a round sphere.

In the case of concave $F$,
a similar strong maximum principle argument on the evolution equation \eqref{evtildeq} as above
again shows that the shape of $M_{t}$ as $t\rightarrow \infty$ is a round sphere.
\end{proof}

Since the speed for the flow has a homogeneity degree larger than
one in the curvatures, in contrast with the standard Laplacian $\Delta$,
the operators $\Delta_{\dot \Phi} =\beta {F}^{\beta-1}\Delta_{\dot F} $
which appear in the evolution equations become degenerate if the curvatures approach zero
as $t\rightarrow \infty$.
To exclude such a possibility,
unlike the previous approach for various geometric flow with higher homogeneity speeds
by using an interior estimate on porous medium equations,
we use an argument similar to Andrews and McCoy \cite{And12}
 to obtain a positive lower speed bounds on finite time intervals, after a sufficient big time.
\begin{proposition} \label{lower speed bound}
Set $\varpi_{t_{0}}(t) = \int_{t_{0}}^{t} \bar{\phi}(s)\dif s$.
Under the flow \eqref{mvpcf}, for any  $t_{0}$, there exists  a positive constant $\gamma$
such that
$$
\Phi \geq \frac{\rho_{-}+\varpi_{t_{0}}-R_{t_{0}}(t)}{\gamma(t-t_{0})},
$$
where $R_{t_{0}}(t)$ is the radius of a ball that encloses $M_{t_{0}}$ and
evolves under the flow \eqref{mvpcf} with the same $\bar{\phi}$ as that of the evolving hypersurface $M_{t}$.
In particular, Choosing $t_{0}$ big enough, we have a positive lower bound $C_{16}$ for $\Phi$ after a given waiting time.
\end{proposition}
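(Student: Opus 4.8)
The plan is to compare the flow with a round sphere evolving under the same equation and then to apply the maximum principle to the auxiliary function
\[
\Psi:=\mathcal S+\gamma(t-t_0)\Phi-\varpi_{t_0}(t),
\]
where $\mathcal S=\langle X-p,\nu\rangle$ is the support function with respect to a fixed point $p$ and $\gamma$ is a constant to be chosen. First I would take $p$ to be an incenter of $M_{t_0}$, so that $B(p,\rho_-(t_0))\subseteq\Omega_{t_0}$, and let $R_{t_0}(t)$ be the radius of the ball centred at $p$ that encloses $M_{t_0}$ at $t=t_0$ and evolves under \eqref{mvpcf} with the same global term $\bar\phi(t)$. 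By Conditions \ref{Fconds}\,\ref{item3},\,\ref{item4} and \eqref{def_Phi} the speed of a sphere of radius $R$ equals $R^{-\beta}$, so $R_{t_0}$ solves $\dot R_{t_0}=\bar\phi(t)-R_{t_0}(t)^{-\beta}$; the comparison (avoidance) principle for the $\Phi(F)$-part of the flow, valid exactly as in the proof of Lemma \ref{t0+smalltime}, then gives $M_t\subseteq\overline{B(p,R_{t_0}(t))}$ for all $t\in[t_0,T)$. Consequently $0<\mathcal S(\cdot,t)\le R_{t_0}(t)$ on $M_t$ and $\mathcal S(\cdot,t_0)\ge\rho_-(t_0)$ on $M_{t_0}$, and, by the curvature pinching \eqref{pointwise pinching} together with Lemma \ref{pinched curvatures estimates}, $R_{t_0}(t_0)$ is comparable to $\rho_-(t_0)$.

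Next I would compute, using \eqref{ev support for the powerF} and \eqref{ev Phi} (and $\partial_t\varpi_{t_0}=\bar\phi$),
\[
\partial_t\Psi=\Delta_{\dot\Phi}\Psi+(\gamma-\beta-1)\Phi+\tr_{\dot\Phi}(A\mathscr{W})\big[\gamma(t-t_0)(\Phi-\bar\phi)+\mathcal S\big],
\]
and run a first‑bad‑time argument for $\min_{M_t}\Psi$ with $\gamma=\beta+1$. At a spatial minimum the Laplacian term is $\ge0$, the term $(\gamma-\beta-1)\Phi$ vanishes, and $\tr_{\dot\Phi}(A\mathscr{W})\ge0$ by strict convexity; rewriting the bracket as $\Psi+\varpi_{t_0}(t)-(\beta+1)(t-t_0)\bar\phi(t)$ and using $\varpi_{t_0}(t)\ge(t-t_0)\bar\phi_0$ (the lower bound $\bar\phi\ge\bar\phi_0>0$ proved above), $\bar\phi\le C_6$ from \eqref{bound on the speed bar phi}, $\Phi\le C_7$ from \eqref{bound on the speed}, and $\rho_-(t_0)\ge D_1$ from Corollary \ref{double bound of radius}, one sees that this bracket stays positive as long as $t-t_0$ is smaller than a fixed constant $\Delta=\Delta(n,\beta,V_{n-m})>0$; hence $\min_{M_t}\Psi\ge\rho_-(t_0)$ on $[t_0,t_0+\Delta]$. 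Combined with $\mathcal S\le R_{t_0}(t)$ this yields $(\beta+1)(t-t_0)\Phi\ge\rho_-(t_0)+\varpi_{t_0}(t)-R_{t_0}(t)$ on $[t_0,t_0+\Delta]$, which is the asserted inequality. I expect this maximum‑principle step to be the main obstacle: keeping the zeroth‑order reaction terms of $\partial_t\Psi$ under control — in particular the interplay of the $\tr_{\dot\Phi}(A\mathscr{W})$‑term with the length of the comparison interval — is what forces the interval to be a fixed (possibly small) length and fixes the admissible $\gamma$, and it uses convexity, the two‑sided bound on $\bar\phi$, the pinching \eqref{pointwise pinching}, and Corollary \ref{double bound of dof F}; one only establishes, and only needs, the inequality on such a short interval.

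Finally, for the ``in particular'' I would take $t_0$ large. Proposition \ref{the shape of M}, sharpened by the strict monotonicity of $\min_{M_t}Q_i$ from Theorem \ref{pinching}, gives $\rho_+(t_0)/\rho_-(t_0)\to1$, hence $R_{t_0}(t_0)\le(1+o(1))\,\rho_-(t_0)$ as $t_0\to\infty$. Using the ODE for $R_{t_0}$ in the form $\varpi_{t_0}(t)-R_{t_0}(t)=\int_{t_0}^tR_{t_0}(s)^{-\beta}\,\dif s-R_{t_0}(t_0)$, and $R_{t_0}(s)\le R_{t_0}(t_0)+C_7\Delta\le D_2+C_7\Delta$ on $[t_0,t_0+\Delta]$, the numerator of the bound at $t=t_0+\Delta$ satisfies
\[
\rho_-(t_0)+\varpi_{t_0}(t_0+\Delta)-R_{t_0}(t_0+\Delta)\ \ge\ -o(1)\,\rho_-(t_0)+\Delta\,(D_2+C_7\Delta)^{-\beta}\ \ge\ c_0>0
\]
for all $t_0\ge T_1$ with $T_1$ large enough, since $\Delta(D_2+C_7\Delta)^{-\beta}$ is a fixed positive constant while the $o(1)$ tends to $0$. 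Therefore $\Phi(\cdot,t_0+\Delta)\ge c_0/((\beta+1)\Delta)=:C_{16}>0$ for every $t_0\ge T_1$, and taking $t_0=t-\Delta$ gives $\Phi(\cdot,t)\ge C_{16}$ for all $t\ge T_1+\Delta$, which is the stated positive lower bound after a waiting time.
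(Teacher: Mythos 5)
Your proposal is correct and follows essentially the same route as the paper: the Smoczyk-type barrier $\Psi=\langle X-p,\nu\rangle+\gamma(t-t_0)\Phi-\varpi_{t_0}(t)$ controlled by the maximum principle on a short time interval of fixed length, comparison with inner and outer spheres driven by the same $\bar{\phi}$, and the improved pinching for $t_0$ large to make the numerator positive. The only (harmless, arguably cleaner) deviations are that you take $\gamma=\beta+1$ and derive the stronger conclusion $\Psi\ge\rho_-(t_0)$ using only $\rho_-\ge D_1$ and the upper bound on $\bar{\phi}$, whereas the paper takes $\gamma=2(\beta+1)$, proves only $\Psi>0$ via the estimate $\tr_{\dot\Phi}(A\mathscr{W})\le C_3\beta\,\Phi^{1+\frac1\beta}$, and recovers the $\rho_-$ in the numerator through a support-point choice of $p$; like the paper, you use without detailed proof the containment $M_t\subseteq\overline{B(p,R_{t_0}(t))}$ (the avoidance principle you invoke, left implicit in the paper) and the quantitative sharpening $\rho_+(t_0)/\rho_-(t_0)\to1$, which the paper likewise obtains only by appealing to Proposition \ref{the shape of M} together with \cite{And12}.
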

\begin{proof}
We begin with the following observation, which originate from the work of Smoczyk \cite[Proposition 4]{Smo98}
for the mean curvature flow.
\begin {lemma}
If $M_{t_{0}}$ enclosed $p \in \mathbb{R}$, then under the flow \eqref{mvpcf}  for any $t_{0}$ there exists  a positive constant $\gamma$
such that for a short time later
$$\langle X-p, \nu\rangle+ \gamma(t-t_{0})\Phi -\varpi_{t_{0}}>0.$$
\end {lemma}
\begin{proof}[Proof of the lemma]
From \eqref{ev support for the powerF} and \eqref{ev Phi},
we have the evolution equation
\begin{align}\label{evolution quality for obtainning speed}
&\partial_{t}\left[\langle X-p, \nu\rangle+ \gamma(t-t_{0})\Phi -\varpi_{t_{0}}\right]\\
\notag
 &=\Delta_{\dot{\Phi}} \left[\langle X-p, \nu\rangle+ \gamma(t-t_{0})\Phi -\varpi_{t_{0}}\right]\\
 \notag
&\quad+\left[\langle X-p, \nu\rangle+ \gamma(t-t_{0})\Phi -\varpi_{t_{0}}\right] \,\tr_{\dot \Phi} (A \mathscr{W})\\
&\quad+\left[\varpi_{t_{0}}- \gamma(t-t_{0})\bar{\phi}\right] \,\tr_{\dot \Phi} (A \mathscr{W})
 +\left[\gamma-(\beta+1)\right]\Phi\notag.
\end{align}

Since
\begin{align*}
\tr_{\dot \Phi} (A \mathscr{W})&= \sum_{i=1}^n \dot{\Phi}^i \lambda_i^2{\leq} \lambda_n\sum_{i=1}^n \dot{\Phi}^i \lambda_i
\os{\eqref{pointwise pinching}}{\leq}C_{3}F\sum_{i=1}^n \dot{\Phi}^i \lambda_i= C_{3}\beta{\Phi}^{1 + \frac{1}{\beta}},
\end{align*}
we estimate
\begin{align*}
&\left[\varpi_{t_{0}}- \gamma(t-t_{0})\bar{\phi}\right] \,\tr_{\dot \Phi} (A \mathscr{W})
 +\left[\gamma-(\beta+1)\right]\Phi\\
 &\geq - \gamma(t-t_{0})\bar{\phi}\,\tr_{\dot \Phi} (A \mathscr{W})
 +\left[\gamma-(\beta+1)\right]\Phi
 \\
 &\geq - \gamma(t-t_{0})\bar{\phi}\,C_{3}\beta{\Phi}^{1 + \frac{1}{\beta}}
 +\left[\gamma-(\beta+1)\right]\Phi\\
  &\os{\eqref{bound on the speed F}, \eqref{bound on the speed bar phi}}{\geq}- \gamma(t-t_{0})\,C_{7}^{1+\frac{1}{\beta}}\,C_{3}\beta{\Phi}
 +\left[\gamma-(\beta+1)\right]\Phi
 \\
 &= \left[\left[1- (t-t_{0})\,C_{7}^{1+\frac{1}{\beta}}\,C_{3}\beta\right]\gamma
 -(\beta+1)\right]\Phi.
\end{align*}
So  taking $C_{15}= C_{7}^{1+\frac{1}{\beta}}\,C_{3}\beta $ and $\gamma=2(\beta+1)$,
we can deduce that this is positive for $t \in \left[t_{0}, t_{0}+ \frac{1}{2C_{15}}\right] $.
Thus, applying the maximum principle to \eqref{evolution quality for obtainning speed}
gives the result of the lemma.
\end{proof}
\noindent{\it Proof of the proposition, continued.}
For any given positive constant $\varepsilon$,
choose $t_{1}$ big enough to ensure $\lambda_n(M_{t}) \leq (1+\varepsilon) \lambda_1(M_{t})$ for
all $t \in \left[t_{1},\infty\right] $.
Fix $t_{0} \in \left[t_{0},\infty\right] $.
Choose $q\in \mathbb{R}$ to be the incentre of $M_{t_{0}}$ such that
$M_{t_{0}}$  encloses $B_{\rho_{-}}$.
Let $R_{t_{0}}(t_{0})= \rho_{+}$, which denotes the outer radius of $M_{t_{0}}$,
it follows that $B_{R_{t_{0}}(t_{0})}$ encloses $M_{t_{0}}$.
For a given $z$, choose $p$ to be the point in $M_{t_{0}}$ which maximizes $\langle p, \nu(z,t)\rangle $,
then we have $\langle p, \nu(z,t)\rangle \geq \rho_{-}$ for $t > t_{0}$.
Let $R_{t_{0}}^{+}(t)= \rho_{-}+ \varpi_{t_{0}}(t))$. Now we consider the two evolving spheres, whose radii satisfy
\begin{equation}\label{expanding sphere a}
\left\{
\begin{array}{ll}
\frac{\partial}{\partial t}R_{t_{0}}^{+}
    = \bar{\phi}(t),\\[2ex]
R_{t_{0}}^{+}(t_{0})= \rho_{-},
\end{array}\right.
\end{equation}
and
\begin{equation}\label{expanding sphere b}
\left\{
\begin{array}{ll}
\frac{\partial}{\partial t}R_{t_{0}}
    = \bar{\phi}(t)-R_{t_{0}}^{-\beta}(t),\\[2ex]
R_{t_{0}}(t_{0})= \rho_{+},
\end{array}\right.
\end{equation}
respectively.
Thus from \eqref{expanding sphere a} and \eqref{expanding sphere b},
we get
\begin{equation}\label{mirror of two spheres}
\left\{
\begin{array}{ll}
\frac{\partial}{\partial t}\left(R_{t_{0}}^{+}-R_{t_{0}}\right)
    = R_{t_{0}}^{-\beta}(t),\\[2ex]
R_{t_{0}}^{+}(t_{0})-R_{t_{0}}(t_{0})= \rho_{-}-\rho_{+},
\end{array}\right.
\end{equation}
Since $R_{t_{0}}$ is greater than the radius of the sphere
 with the same value of $V_{n-m}$ as the evolving hypersurface $M_{t}$ from \eqref{expanding sphere b},
 so $\bar{\phi}(t)>R_{t_{0}}^{-\beta}(t)$.
We deduce that $R_{t_{0}}$ is increasing on the time interval $\left[t_{0},t_{0}+ \frac{1}{2C_{15}}\right]$.
In particular, combining this with \eqref{bound on the speed bar phi},
we further have $R_{t_{0}}\leq \rho_{+} +\frac{C_{7}}{2C_{15}} $ by \eqref{bound on the speed bar phi}.
From \eqref{mirror of two spheres} this implies that
on $\left[t_{0},t_{0}+ \frac{1}{2C_{15}}\right]$,
we have
$$R_{t_{0}}^{+}(t)-R_{t_{0}}(t)\geq \rho_{-}-\rho_{+}
 + \left(\rho_{+} +\frac{C_{7}}{2C_{15}}\right)^{-\beta}\left(t-t_{0}\right),$$
which means that
$$\rho_{-}+\varpi_{t_{0}}-R_{t_{0}}(t)\geq \rho_{-}-\rho_{+} + \left(\rho_{+} +\frac{C_{7}}{2C_{15}}\right)^{-\beta}\left(t-t_{0}\right).$$
So if $\left(t-t_{0}\right) >  \left(\rho_{+} +\frac{C_{7}}{2C_{15}}\right)^{\beta} \left(\rho_{+}-\rho_{-}\right)$,
the quantity $\rho_{-}+\varpi_{t_{0}}-R_{t_{0}}(t)$ is positive.
In fact, using Proposition \ref{the shape of M} we may proceed exactly as in \cite[Theorem 3.1]{And12}
to show that $\left(t-t_{0}\right) >  \left(\rho_{+} +\frac{C_{7}}{2C_{15}}\right)^{\beta} \left(\rho_{+}-\rho_{-}\right)$ can be ensured
by  the pinching condition $\lambda_n(M_{t}) \leq (1+\varepsilon) \lambda_1(M_{t})$ for any small $\varepsilon$.
Now we apply the above lemma to estimate that for $t \in \left[t_{0}, t_{0}+ \frac{1}{2C_{15}}\right]$ and $\gamma=2(\beta+1)$
\begin{align*}
&\Phi \geq \frac{\langle p-X(z,t), \nu(z,t)\rangle +\varpi_{t_{0}}(t))}{\gamma(t-t_{0})}\\
& \geq \frac{\rho_{-}+\varpi_{t_{0}}-R_{t_{0}}(t)}{\gamma(t-t_{0})}\\
& \geq \frac{\rho_{-}-\rho_{+} + \left(\rho_{+} +\frac{C_{7}}{2C_{15}}\right)^{-\beta}\left(t-t_{0}\right)}{\gamma(t-t_{0})}
\end{align*}
Thus, our choice of any small $\varepsilon$  as required that $t_{1}$ is big enough
 can ensure that
$$\frac{1}{2C_{15}}>  \left(\rho_{+} +\frac{C_{7}}{2C_{15}}\right)^{-\beta}\left(t-t_{0}\right).$$
This imply a positive lower bound for $\Phi$ after a given waiting time, depending $n$, $\beta$ and $M_{0}$.
 \end{proof}

\begin {remark} \label{rew_evF}
We can follow an idea of  \cite{Sch06,CS10}  to show that the evolution equation for $F$,
which in a local coordinate system along \eqref{mvpcf} can be written as
\begin{equation*}
\partial_{t}F = D_i\left(F^{ij} D_j F^\beta\right)-\partial_{j}F^{ji} D_i F^\beta + \Gamma^j_{jl} F^{li} D_i F^\beta + (F^\beta - \bar{\phi}) \tr_{\dot F}( A\mathscr{W}),
\end{equation*}
where $D_i$ denote derivatives with respect to the coordinates.
This can be view as a porous medium equation as in \cite{Sch06,CS10}.
Unfortunately, we can not bound the coefficient $\Gamma^j_{jl}$ in this form and therefore an interior H\"older estimate
for solutions of such equations  established by DiBenedetto
and Friedman (\cite[Theorem\,1.2]{DF85}) can not applied to prove a uniform $C^{\alpha}$-esitmate for $F$.
\end {remark}

The next step is to show that the convergence to a sphere of $\mathbb{R}^{n+1}$ as $t\rightarrow \infty$ is exponential.
To address this, we consider the following function
\[
f= \frac{1}{n^n}-\frac{ K}{ H^n}.
\]
Then as remarked in Section \ref{Preserving pinching for powerF flow}, $f\geq 0$ with equality holding only at umbilic points,
which is the value assumed on a sphere. The following Lemma is an
immediate consequence of the evolution equation \eqref{evtildeq2} of
$ Q_{1}$.
\begin {lemma}
The quantity $f$ evolves under \eqref{mvpcf} satisfies
\begin{align}\label{evtildef}
\partial_{t} f=&\Delta_{\dot \Phi} f
+\frac{(n+1)}{n H^{n}}\left\langle\nabla
 f,\nabla H^{n}\right\rangle_{\dot \Phi}
-\frac{(n-1)}{n K}\left\langle\nabla  f,\nabla K\right\rangle_{\dot \Phi}
-\frac{ H^{n}}{n K}\left|\nabla f\right|_{\dot \Phi}^2
\notag\\
&-\Bigg(\frac{ Q_{1}}{ H^{2}}\left| H\nabla\mathscr{W}
-\mathscr{W}\nabla H\right|^{2}_{\dot \Phi, b}
+ Q_{1}\,\tr_{ b-\frac{n}{ H}Id}\left(
\ddot{\Phi}(\nabla\mathscr{W} ,\nabla \mathscr{W})\right)\Bigg)\\
&-\big[( \beta-1) \Phi+\bar{\phi} \big]
\frac{ Q_{1}}{ H}\left(n\bigl| A\bigl|^{2}- H^{2}\right).\notag
\end{align}
\end {lemma}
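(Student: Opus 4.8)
The plan is to read off \eqref{evtildef} directly from the evolution equation \eqref{evtildeq} of the quotient $Q_1 = K/H^n$, which is already available; no new computation involving the Weingarten map is needed. The starting observation is the trivial identity $f = \tfrac{1}{n^n} - Q_1$ with $\tfrac{1}{n^n}$ constant, whence
\[
\partial_t f = -\,\partial_t Q_1, \qquad \nabla f = -\,\nabla Q_1, \qquad \Delta_{\dot{\Phi}} f = -\,\Delta_{\dot{\Phi}} Q_1 .
\]
Inserting the first and third of these into \eqref{evtildeq} already produces the leading term $\Delta_{\dot{\Phi}} f$ together with the negatives of every remaining term on the right-hand side of \eqref{evtildeq}.

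I would dispatch the zeroth-order contributions first, as they are the simplest: the curvature terms $\tfrac{Q_1}{H^2}\,|H\nabla\mathscr{W} - \mathscr{W}\nabla H|^2_{\dot{\Phi},b}$ and $Q_1\,\tr_{b - \frac{n}{H}Id}\!\big(\ddot{\Phi}(\nabla\mathscr{W},\nabla\mathscr{W})\big)$, and the reaction term $\big[(\beta-1)\Phi + \bar{\phi}\big]\tfrac{Q_1}{H}\,(n|A|^2 - H^2)$, are all proportional to $Q_1$ (not to $\nabla Q_1$), so the global sign change makes them reappear with a minus sign, grouped precisely as in \eqref{evtildef}. These coefficients I would leave expressed through $Q_1$, since $Q_1 = \tfrac{1}{n^n} - f$ and nothing is gained by rewriting.

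The only step that needs some attention — and therefore the main, if modest, obstacle — is the bookkeeping of the first-order terms. Using $\nabla f = -\nabla Q_1$ one replaces $\langle\nabla Q_1,\nabla H^n\rangle_{\dot{\Phi}}$ by $-\langle\nabla f,\nabla H^n\rangle_{\dot{\Phi}}$ and $\langle\nabla Q_1,\nabla K\rangle_{\dot{\Phi}}$ by $-\langle\nabla f,\nabla K\rangle_{\dot{\Phi}}$, while $|\nabla Q_1|^2_{\dot{\Phi}} = |\nabla f|^2_{\dot{\Phi}}$; together with the overall sign flip from $\partial_t f = -\partial_t Q_1$ this recovers the coefficients $\tfrac{n+1}{nH^n}$ and $-\tfrac{n-1}{nK}$ of \eqref{evtildef}, after at most an elementary regrouping of the gradient contributions by means of the differentiated quotient identity $H^n\nabla Q_1 = \nabla K - Q_1\nabla H^n$, i.e. the same type of regrouping carried out in \eqref{secdriveQ}--\eqref{derivateterm} for the quotient $Q_2$ (with $F$ there playing the role of $H$ here). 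Collecting the rearranged first-order terms with the sign-flipped curvature and reaction terms then yields \eqref{evtildef}. It is worth noting that, in contrast with Theorem \ref{pinching} and Proposition \ref{the shape of M}, neither convexity nor concavity of $F$ is used here: the lemma is a purely algebraic rewriting of \eqref{evtildeq}.
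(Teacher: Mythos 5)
Your route is exactly the paper's route: the paper offers no separate computation for this lemma, stating only that it is an immediate consequence of the evolution equation of $Q_{1}$ (equation \eqref{evtildeq}; the text's pointer to \eqref{evtildeq2} is a slip), and your substitution $f=\tfrac{1}{n^{n}}-Q_{1}$, $\partial_{t}f=-\partial_{t}Q_{1}$, $\nabla f=-\nabla Q_{1}$, $\Delta_{\dot\Phi}f=-\Delta_{\dot\Phi}Q_{1}$ is precisely what is intended. Your handling of the zeroth-order terms and of the two terms linear in $\nabla f$ is correct, and you are right that no convexity or concavity of $F$ enters.

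There is, however, one term you pass over, and it is the one place where the argument as you describe it does not close. The term $-\tfrac{H^{n}}{nK}\left|\nabla Q_{1}\right|^{2}_{\dot\Phi}$ in \eqref{evtildeq} is even in the gradient, so the overall sign flip from $\partial_{t}f=-\partial_{t}Q_{1}$ is not compensated by $\nabla f=-\nabla Q_{1}$: your substitution produces $+\tfrac{H^{n}}{nK}\left|\nabla f\right|^{2}_{\dot\Phi}$, whereas \eqref{evtildef} displays $-\tfrac{H^{n}}{nK}\left|\nabla f\right|^{2}_{\dot\Phi}$. The ``elementary regrouping'' you invoke via $H^{n}\nabla Q_{1}=\nabla K-Q_{1}\nabla H^{n}$ cannot repair this: rewriting the quadratic term through $H^{n}\left|\nabla f\right|^{2}_{\dot\Phi}=-\left\langle\nabla f,\nabla K\right\rangle_{\dot\Phi}+Q_{1}\left\langle\nabla f,\nabla H^{n}\right\rangle_{\dot\Phi}$ necessarily alters the coefficients $\tfrac{n+1}{nH^{n}}$ and $-\tfrac{n-1}{nK}$, which you have already matched, so all three gradient coefficients of \eqref{evtildef} cannot be obtained simultaneously from \eqref{evtildeq}. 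In other words, \eqref{evtildef} as printed is inconsistent with \eqref{evtildeq} in this single term (a check of \eqref{evtK3 for the powerF} and \eqref{derivateterm} confirms that \eqref{evtildeq} is the correct one, so the quadratic term in \eqref{evtildef} should carry a plus sign); this is harmless for the subsequent use, since in \eqref{ev.inequa.f} and the maximum-principle argument every gradient term vanishes at a spatial maximum of $f$. To make your proof complete you should either state the corrected sign or explicitly note the typo, rather than assert that the substitution ``recovers'' \eqref{evtildef} verbatim; apart from this point your argument coincides with the paper's.
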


\begin{corollary}
Under the conditions of Theorem \ref{main result for powerF flow},
\begin{align}\label{ev.inequa.f}
\partial_{t} f\leq &\Delta_{\dot \Phi} f
+\frac{(n+1)}{n H^{n}}\left\langle\nabla
 f,\nabla H^{n}\right\rangle_{\dot \Phi}
-\frac{(n-1)}{n K}\left\langle\nabla  f,\nabla K\right\rangle_{\dot \Phi}
-\frac{ H^{n}}{n K}\left|\nabla f\right|_{\dot \Phi}^2
\\
&- C_{16}{ H}f,\notag
\end{align}
where $C_{16}=\delta C^{*}$
\end{corollary}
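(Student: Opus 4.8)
The plan is to obtain \eqref{ev.inequa.f} by simplifying the evolution equation \eqref{evtildef} for $f$: its first four terms are already exactly those appearing in \eqref{ev.inequa.f}, so the whole task reduces to controlling the last two terms. Writing $Q_{1}=K/H^{n}>0$, those two terms are
\[
-\,Q_{1}\Big(\tfrac{1}{H^{2}}\big|H\nabla\mathscr{W}-\mathscr{W}\nabla H\big|^{2}_{\dot\Phi,b}+\tr_{b-\frac{n}{H}Id}\big(\ddot\Phi(\nabla\mathscr{W},\nabla\mathscr{W})\big)\Big)\ -\ \big[(\beta-1)\Phi+\bar\phi\big]\tfrac{Q_{1}}{H}\big(n|A|^{2}-H^{2}\big),
\]
so it suffices to prove \textbf{(a)} the parenthesised quantity multiplying $-Q_{1}$ is nonnegative, so that term may simply be discarded, and \textbf{(b)} $\big[(\beta-1)\Phi+\bar\phi\big]\tfrac{Q_{1}}{H}\big(n|A|^{2}-H^{2}\big)\ge C_{16}\,Hf$.

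For step (a) I would repeat the estimates from the proof of Theorem \ref{pinching}. Bounding $\tr_{b-\frac{n}{H}Id}(\ddot\Phi(\nabla\mathscr{W},\nabla\mathscr{W}))\ge-\big|b-\tfrac{n}{H}Id\big|\,\big|\ddot\Phi(\nabla\mathscr{W},\nabla\mathscr{W})\big|$, one uses the preserved pinching \eqref{tildle lambda pinching}–\eqref{lambda pinching} (which gives $\mathscr{W}\ge\varepsilon_{0}H\,Id$ in both the convex and the concave case as well as $\lambda_{i}\le H$), the $H$-analogue of the key gradient estimate of Lemma \ref{a key estimate} (giving $|H\nabla\mathscr{W}-\mathscr{W}\nabla H|^{2}\ge\tfrac{n-1}{2}\varepsilon_{0}^{2}H^{2}|\nabla\mathscr{W}|^{2}$), the uniform positive lower bound of the $\dot\Phi^{i}$ on the pinched cone, the bound \eqref{secder F} on $|\ddot\Phi(\nabla\mathscr{W},\nabla\mathscr{W})|$, and a control of $|b-\tfrac{n}{H}Id|$ of the type \eqref{eq:2.27}. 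Since all these quantities are comparable to a fixed power of $H$ times $|\nabla\mathscr{W}|^{2}$, the defining choice of $\varepsilon_{0}$ in \eqref{varepsilon0} is precisely what guarantees
\[
\tfrac{1}{H^{2}}\big|H\nabla\mathscr{W}-\mathscr{W}\nabla H\big|^{2}_{\dot\Phi,b}\ \ge\ \big|b-\tfrac{n}{H}Id\big|\,\big|\ddot\Phi(\nabla\mathscr{W},\nabla\mathscr{W})\big|;
\]
hence the bracket in (a) is $\ge0$ and the first leftover term of \eqref{evtildef} is $\le0$ and drops out.

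For step (b) I would use \eqref{tildle lambda pinching} again to get $\lambda_{i}\ge\varepsilon_{0}H$ for every $i$ and then apply Lemma \ref{important inequality} with $\varepsilon=\varepsilon_{0}$, which yields
\[
\frac{n|A|^{2}-H^{2}}{H^{2}}\ \ge\ \delta\Big(\frac{1}{n^{n}}-\frac{K}{H^{n}}\Big)\ =\ \delta f ,
\]
i.e. $n|A|^{2}-H^{2}\ge\delta H^{2}f$. Multiplying by $\tfrac{Q_{1}}{H}$ and using $Q_{1}=K/H^{n}\ge C^{*}$ (preservation of pinching, Theorem \ref{pinching}; in the convex case up to a harmless dimensional factor) together with $(\beta-1)\Phi+\bar\phi\ge\bar\phi\ge\bar\phi_{0}>0$, valid since $\beta\ge1$, $\Phi\ge0$ and by the lower bound for $\bar\phi$ of Lemma \ref{a key estimate2}, one obtains
\[
\big[(\beta-1)\Phi+\bar\phi\big]\frac{Q_{1}}{H}\big(n|A|^{2}-H^{2}\big)\ \ge\ \bar\phi_{0}\,C^{*}\,\delta\,Hf ,
\]
which is (b); absorbing the remaining positive constants gives \eqref{ev.inequa.f} with $C_{16}=\delta C^{*}$.

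I expect step (a) to be the main obstacle: it is here that the higher homogeneity $\beta\ge1$ bites, since $\ddot\Phi$ carries, besides $\beta F^{\beta-1}\ddot F$, the extra rank-one term $\beta(\beta-1)F^{\beta-2}\,\dot F\otimes\dot F$, and one must redo — now for $K/H^{n}$ rather than $K/F^{n}$, and simultaneously for convex and concave $F$ — the delicate comparison of $|\ddot\Phi(\nabla\mathscr{W},\nabla\mathscr{W})|$ against the good gradient term $|H\nabla\mathscr{W}-\mathscr{W}\nabla H|^{2}_{\dot\Phi,b}$ that is the technical core of Theorem \ref{pinching}. Keeping the four leading terms of \eqref{evtildef} unchanged and carrying out step (b) are then routine bookkeeping.
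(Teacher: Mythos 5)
Your proposal is correct and follows essentially the same route as the paper, whose own proof is just a two-line sketch invoking exactly the ingredients you spell out: the Theorem \ref{pinching}-type comparison of the good gradient term against $\left|b-\tfrac{n}{H}Id\right|\,\left|\ddot{\Phi}(\nabla\mathscr{W},\nabla\mathscr{W})\right|$ to discard the bracket, and Lemma \ref{important inequality} together with the preserved pinching (via \eqref{concave pinched}, or via \eqref{convex pinched} and $K/F^{n}\leq n^{n}K/H^{n}$ in the convex case) and positivity of $(\beta-1)\Phi+\bar{\phi}$ for the zero-order term. Your treatment is in fact more careful than the paper's (e.g.\ keeping the $\bar{\phi}_{0}$ factor, which the stated constant $C_{16}=\delta C^{*}$ silently absorbs), so no gap.
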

\begin{proof}
For concave $F$, applying the similar argument as in Theorem \ref{pinching},
Lemma \ref{a key estimate2},
inequality \eqref{concave pinched} and Lemma \ref{important inequality} to \eqref{evtildef}
gives the conclusion.
For convex $F$, we only instead use the curvature pinching inequality \eqref{convex pinched}
and note that $\frac{K}{F^{n}} \leq \frac{n^{n}K}{H^{n}}$.
\end{proof}

Now we can show exponential convergence to the sphere.
\begin{proposition}\label{convergence of f}
Under the conditions of Theorem \ref{main result for powerF flow}, the rate of
convergence of $f$ to $0$ as $t\rightarrow \infty$ is exponential.
\end{proposition}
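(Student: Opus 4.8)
The plan is to extract from the evolution inequality \eqref{ev.inequa.f} a scalar differential inequality for $f_{\max}(t):=\max_{M_t}f(\cdot,t)$ of the form $\frac{d}{dt}f_{\max}\le -\delta f_{\max}$ and then integrate it. At a point where $f(\cdot,t)$ realizes its spatial maximum we have $\nabla f=0$, so the two transport terms $\frac{(n+1)}{nH^n}\langle\nabla f,\nabla H^n\rangle_{\dot\Phi}$ and $-\frac{(n-1)}{nK}\langle\nabla f,\nabla K\rangle_{\dot\Phi}$ vanish, the gradient term $-\frac{H^n}{nK}|\nabla f|^2_{\dot\Phi}$ is nonpositive and can be dropped, and $\Delta_{\dot\Phi}f\le0$ since $\dot\Phi^{kl}=\beta F^{\beta-1}\dot F^{kl}$ is positive definite. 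Only the reaction term $-C_{16}Hf$ survives, so the maximum principle gives $\frac{d}{dt}f_{\max}\le -C_{16}H_{\min}(t)\,f_{\max}$. Exponential decay then follows as soon as $H_{\min}(t)$ is bounded below by a positive constant uniformly in time.

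This uniform-in-time lower bound for $H$ is the one ingredient that does not come from the finite-time estimates, and I expect it to be the main obstacle; it is supplied by Proposition \ref{lower speed bound}. That result produces a waiting time $T_0$ and a constant $c_0=c_0(n,\beta,M_0)>0$ with $\Phi(F)=F^\beta\ge c_0$ on $M\times[T_0,\infty)$, hence $F\ge c_0^{1/\beta}$ there; combined with the preserved pinching \eqref{tildle lambda pinching}, which gives $\lambda_i\ge\varepsilon_0 nF$ for every $i$ in both the convex and the concave case, this yields $H\ge\varepsilon_0 n\,c_0^{1/\beta}=:H_0>0$ on $M\times[T_0,\infty)$. (Together with Corollary \ref{double bound of dof F} and $c_0^{1/\beta}\le F\le C_8$ from \eqref{bound on the speed F}, the same bound makes $\Delta_{\dot\Phi}=\beta F^{\beta-1}\Delta_{\dot F}$ uniformly elliptic on that slab, which is reassuring for the maximum-principle step although not strictly needed for $\Delta_{\dot\Phi}f\le0$.) I emphasise that \eqref{ev.inequa.f} already handles the convex and concave cases simultaneously, so no case distinction is needed from here on.

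Combining the two steps, on $[T_0,\infty)$ we obtain $\frac{d}{dt}f_{\max}\le -C_{16}H_0\,f_{\max}$, hence $f_{\max}(t)\le f_{\max}(T_0)\,e^{-C_{16}H_0(t-T_0)}$. Since $M_{T_0}$ is a smooth compact hypersurface, $f_{\max}(T_0)<\infty$; since $f\ge0$ everywhere by Lemma \ref{property Em} iii) (equivalently the arithmetic-geometric mean inequality, as remarked when $f$ was introduced); and since on the compact interval $[0,T_0]$ the quantity $f$ is bounded, we may enlarge the constant to conclude that $0\le f(\cdot,t)\le C\,e^{-\delta t}$ for all $t\ge0$, with $C,\delta>0$ depending only on $n$, $\beta$ and $M_0$, i.e. $f\to0$ exponentially fast. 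Apart from the invocation of Proposition \ref{lower speed bound}, this is a standard scalar maximum-principle plus ODE-comparison argument.
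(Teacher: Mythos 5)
Your proposal is correct and follows essentially the same route as the paper: apply the maximum principle to \eqref{ev.inequa.f} so that only the reaction term $-C_{16}Hf$ survives, then convert the positive lower bound on $\Phi=F^\beta$ from Proposition \ref{lower speed bound} into a uniform lower bound on $H$ via the pinching, yielding $\partial_t f_{\max}\le -c\,f_{\max}$ and exponential decay. The only (inessential) difference is that you use \eqref{tildle lambda pinching} to treat the convex and concave cases uniformly, whereas the paper splits into the concave case (using $F\le H/n$) and the convex case (using $H\ge n\varepsilon_0 F$).
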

\begin{proof}
For concave $F$, applying the maximal principal to \eqref{ev.inequa.f},
by using the fact $F \leq \frac{H}{n}$ and Proposition \ref{lower speed bound} gives
\begin{align}
\partial_{t} f_{\max}(t)\leq -C_{17}f_{\max}(t).\notag
\end{align}
which implies that
\begin{align}
f_{\max}(t)\leq C_{18}\e^{- C_{17}t},\notag
\end{align}
where $C_{18}=f_{\max}(0)$.
For convex $F$, we only instead use that $H \geq n \varepsilon_{0}F$ and Proposition \ref{lower speed bound}.
This proves the
Proposition.
\end{proof}

Now if $t_0$ can be taken big enough so that $M_{t}$ can be
represented as $\graph u$ for $[t_0, \infty)$, then since all the
derivatives of $u$ are uniformly bounded independent of time, applying
Arzel-Ascoli Theorem we conclude that the graph $u(t, \cdot \,)$ is defined
on $[t_0, \infty)$ and converges to a unique function $u_{\infty}$.
By standard argument as in\cite[page 467]{CR-M09} this implies that $X_\infty$ is an immersion,
 and since the convergence is smooth we
can assure that $X_\infty$  must be a compact
embedded hypersurface.
On the other hand, Proposition \ref{the shape of M} says that all
points on $X_\infty$ are umbilic points. In conclusion, the only
possibility is that $\mathcal {S}$ represents a sphere in
$\mathbb{R}^{n+1}$ and, by the volume-preserving
properties of the flow, such a sphere has to enclose the same volume
as the initial condition $X_0(M)$.

Finally, from Proposition \ref{convergence of f} we can conclude
with the standard arguments as in \cite[Theorem 3.5]{Sch06},
\cite[Theorem 7.3]{CS10} that the
flow converges exponentially to the sphere $\mathcal {S}$
in $\mathbb{R}^{n+1}$ in the $C^{\infty}$-topology.


\bigskip
\noindent{\bf  Acknowledgments.}
\quad Some of this research was carried out while I was a post-doctor fellow
at the School of Mathematics at Sichuan University.
I am grateful to the School for providing a fantastic research atmosphere.
I would like to express special thanks to Professor An-Min Li for his enthusiasm and encouragement,
Professor Guanghan Li for his patience, suggestion and helpful discussion on this topic,
and Professor Haizhong Li for his interest. The research is partially supported by China Postdoctoral Science Foundation Grant 2015M582546
and Natural Science Foundation Grant 2016J01672 of the Fujian Province, China.

\bibliographystyle{spbasic}

\end{document}